\theoremstyle{plain}
\newtheorem{theorem}[subsection]{Theorem}
\newtheorem*{theorem*}{Theorem}
\newtheorem{proposition}[subsection]{Proposition}
\newtheorem*{proposition*}{Proposition}
\newtheorem{lemma}[subsection]{Lemma}
\newtheorem*{lemma*}{Lemma}
\theoremstyle{definition}
\newtheorem{remark}[subsection]{Remark}
\newtheorem{rem}{Remark}
\newcommand{\0}{{(0)}}
\newcommand{\Hom}{\operatorname{Hom}}
\newcommand{\GL}{\operatorname{GL}}
\newcommand{\PSL}{\operatorname{PSL}}
\newcommand{\PSO}{\operatorname{PSO}}
\newcommand{\meas}{\operatorname{\meas}}
\newcommand{\mF}{\mathcal{F}}
\newcommand{\tmF}{\tilde{\mathcal{F}}}
\newcommand{\half}{\frac{1}{2}}
\newcommand{\nc}{\newcommand}
\nc{\cal}{\mathcal} 
\nc{\la}{\langle} \nc{\ra}{\rangle}
 \nc{\CA}{\cal A}
 \nc{\CBB}{\cal B}
\nc{\CDD}{\cal D}
\nc{\CE}{\cal E}
\nc{\CF}{\cal F} \nc{\CG}{\cal
G} \nc{\CH}{\cal H} \nc{\CI}{\cal I} \nc{\CJ}{\cal J}
\nc{\CK}{\cal K} \nc{\CL}{\cal L} \nc{\CM}{\cal M} \nc{\CN}{\cal
N} \nc{\CO}{\cal O} \nc{\CP}{\cal P} \nc{\CQ}{\cal Q}
\nc{\CR}{\cal R} \nc{\CS}{\cal S} \nc{\CT}{\cal T} \nc{\CU}{\cal
U} \nc{\CV}{\cal V} \nc{\CW}{\cal W} \nc{\CZ}{\cal Z}
\nc{\fa}{\mathfrak a} \nc{\fg}{\mathfrak g} \nc{\fk}{\mathfrak k}
\nc{\fh}{\mathfrak h} \nc{\fm}{\mathfrak m} \nc{\fn}{\mathfrak n}
\nc{\fA}{\mathfrak A} \nc{\fC}{\mathfrak C} \nc{\fI}{\mathfrak I}
\nc{\fL}{\mathfrak L} \nc{\fS}{\mathfrak S}
\nc{\fz}{\mathfrak z}
\nc{\nen}{\newenvironment} \nc{\ol}{\overline}
\nc{\ul}{\underline} \nc{\lra}{\longrightarrow}
\nc{\lla}{\longleftarrow} \nc{\Lra}{\Longrightarrow}
\nc{\Lla}{\Longleftarrow} \nc{\Llra}{\Longleftrightarrow}
\nc{\hra}{\hookrightarrow} \nc{\iso}{\overset{\sim}{\lra}}
\numberwithin{equation}{subsection}
 \nc{\ba}{\mathbb A}
 \nc{\bG}{\mathbb G}
 \nc{\bq}{\mathbb Q}
 \nc{\br}{\mathbb R}
 \nc{\bz}{\mathbb Z}
 \nc{\bc}{\mathbb C}
 \nc{\bn}{\mathbb N}
 \nc{\bX}{\mathbb{X}}
 \nc{\ck}{\mathcal{K}}
 \nc{\G}{\Gamma}
 \nc{\sm}{\setminus}
 \nc{\sub}{\subset}
 \nc{\lm}{\lambda}
  \nc{\Lm}{\Lambda}
 \nc{\al}{\alpha}
 \nc{\bt}{\beta}
 \nc{\om}{\omega}
 \nc{\dl}{\delta}
 \nc{\g}{\gamma}
 \nc{\Dl}{\Delta}
 \nc{\Om}{\Omega}
 \nc{\s}{\sigma}
 \nc{\ro}{\rho}
 \nc{\te}{\theta}
 \nc{\SLR}{\operatorname{SL}_2(\br)}
 \nc{\GLR}{\operatorname{GL}_2(\br)}
 \nc{\PGLR}{\operatorname{PGL}_2(\br)}
 \nc{\PSLR}{\operatorname{PSL}_2(\br)}
 \nc{\PSLZ}{\operatorname{PSL}_2(\bz)}
 \nc{\SLC}{\operatorname{SL}_2(\bc)}
 \nc{\uH}{\mathbb H}
 \nc{\fD}{\mathcal{D}}
 \nc{\fE}{\mathcal{E}}
 \nc{\fO}{\mathcal{O}}
 \nc{\haf}{\frac{1}{2}}
 \nc{\qtr}{\frac{1}{4}}
 \nc{\shaf}{{\scriptstyle\frac{1}{2}}}
 \nc{\hlm}{{\scriptstyle\frac{\lambda}{2}}}
 \nc{\inv}{^{-1}}
 \nc{\eps}{\varepsilon}
 \nc{\aG}{\mathbf{G}}
 \nc{\spn}{\operatorname{Span}}
 \nc{\Cm}{\operatorname{CM}}
\begin{document}

\title{ Nodal domains of Maass forms II}
\author{Amit Ghosh, Andre Reznikov and Peter Sarnak}
\date{}
\setcounter{section}{0}
\setcounter{subsection}{0}
\setcounter{footnote}{0}
\setcounter{page}{1}

\begin{abstract}
In Part I we gave a polynomial growth lower-bound for the number of nodal domains of a Hecke-Maass cuspform in a compact part of the modular surface, assuming a Lindel\"{o}f hypothesis. That was a consequence of a topological argument and known subconvexity estimates,  together with new sharp lower-bound restriction theorems for the Maass forms. This paper deals with the same question for general (compact or not) arithmetic  surfaces which have a reflective symmetry. The topological argument is extended and representation theoretic methods are needed for the restriction theorems, together with results of Waldspurger. Various explicit examples are given and studied.
\end{abstract}

\maketitle
\thispagestyle{empty}

\pagestyle{fancy}
\fancyhead{}
\fancyhead[LE]{\thepage}
\fancyhead[RO]{\thepage}
\fancyhead[CO]{\small Nodal domains of Maass forms II}
\fancyhead[CE]{\small  Amit Ghosh, Andre Reznikov and Peter Sarnak}
\renewcommand{\headrulewidth}{0pt}
\headsep = 1.0cm
\fancyfoot[C]{}
\ \vskip 1cm
\renewcommand{\thefootnote}{ } 
\renewcommand{\footnoterule}{{\hrule}\vspace{3.5pt}} 

\footnote{{\bf Mathematics Subject Classification (2010).} Primary: 11F12, 11F30. Secondary: 34F05, 35P20, 81Q50.}

\renewcommand{\thefootnote}{\arabic{footnote}\quad } 
\setcounter{footnote}{0} 

\setcounter{section}{0}\setcounter{equation}{0}
\section{Introduction}
This paper is a continuation of our \cite{GRS1}, which will be referred to as I. The main result there gives a lower bound for the number of nodal domains for Maass forms on a compact part of the modular surface. We indicated in I that the techniques there can be modified and extended to deal with a general arithmetic surface. Our aim here is to carry this out. The treatment in I relies heavily on the Fourier expansions of the Maass forms in the cusp, and for a compact surface these are not available. Our primary focus here will be on the compact cases. For the most part we use the same notation and terminology as in I.

Let $\mathbb{X}=\Gamma\backslash\mathbb{H}$ be a finite area hyperbolic surface.  Let $\Delta$ be the Laplacian on functions of $\mathbb{X}$, and $\psi$ (or $\phi$) will denote $L^{2}$-eigenfunctions with eigenvalue $\lambda_{\psi}= \frac{1}{4} + t_{\psi}^{2} >0$\ (functions $\psi$ are also called  Maass forms \cite{Maass}). We let $\sigma :  \mathbb{X} \to \mathbb{X}$ be an isometric involution on $\mathbb{X}$ which is induced from a hyperbolic reflection on $\mathbb{H}$. Next, we let $\Sigma = \mathrm{Fix}(\sigma)$ denote the points of $\mathbb{X}$ fixed by $\sigma$; it consists of a union (possibly disconnected) of piecewise geodesics in $\mathbb{X}$ (see Section 2). Our main interest is when $\Sigma$ is compact unlike the case in I where $\Sigma$ ran into the cusps of $\mathbb{X}$. By $\phi$ we will always mean a $\sigma$-even eigenfunction of $\Delta$ (we could just as well deal with $\sigma$-odd as indicated in the Appendix of I).  The nodal domains $\Omega$  of $\phi$ are inert or split according as $\sigma(\Omega)\cap\Omega$ is equal to $\Omega$ or is empty. The method we use in I to produce inert nodal domains involved three parts. The first part is a topological reduction, the second an investigation of lower bounds of $L^{2}$-restrictions of $\phi$ to $\Sigma$. This last appeals to QUE (Quantum Unique Ergodicity \cite{Li}) but involves no arithmetic input. It is in the third step where we need $\phi$ to be a Hecke eigenfunction as well, that we assume $\mathbb{X}$ is arithmetic. This allows us to use proven QUE in these cases (\cite{Li, So10}),  subconvexity $L^{\infty}$-bounds for eigenfunctions (\cite{IS95}) and the theory of automorphic $L$-functions. All of these go into the proof of our main Theorem 1.4 below.

We turn to the precise statements of our results and an outline of the paper. In Section 2 we give an extension of Theorem 2.1 of I to a general surface $\mathbb{X}$ of genus $g$ and for a general $\Sigma$, which may be non-separating (for another proof of this extension in the separating case, see \cite{JZ}, Section 7);

\begin{theorem} Let $N_\mathrm{ in}(\phi)$ be the number of inert nodal domains of $\phi$  and let $n_{\phi}$ denote the number of sign-changes of $\phi$ along $\Sigma$ . Then
\[ N_\mathrm{ in}(\phi)\geq \frac{1}{2}n_{\phi} -g +1.
\]

\end{theorem}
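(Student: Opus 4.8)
The plan is to argue topologically on the surface $\mathbb{X}$, following the strategy of Theorem 2.1 of I but now allowing $\Sigma$ to be non-separating and $\mathbb{X}$ to have positive genus. Let $Z = \phi^{-1}(0)$ be the nodal set of $\phi$; since $\phi$ is a $\sigma$-even Laplace eigenfunction, $Z$ is $\sigma$-invariant, and by the usual elliptic regularity/unique continuation facts $Z$ is a finite embedded graph on $\mathbb{X}$ (a curve with finitely many singular crossing points), partitioning $\mathbb{X}$ into the nodal domains $\Omega$. I would first record that each sign change of $\phi$ along $\Sigma$ forces a point of $Z \cap \Sigma$ at which a nodal arc of $\phi$ crosses $\Sigma$ transversally; because $\phi$ is $\sigma$-even, such an arc is genuinely part of the nodal line (not a piece of $\Sigma$ itself forced to vanish), and it is therefore the common boundary of an inert nodal domain — the local picture near a transversal crossing of $\Sigma$ by $Z$ is $\sigma$-symmetric, so the domain on one side is swapped with itself by $\sigma$.

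**The counting/Euler-characteristic step**

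The core is a counting argument. Consider the components of $\mathbb{X} \setminus (Z \cup \Sigma)$, or better, cut $\mathbb{X}$ along $\Sigma$ to obtain a (possibly disconnected) surface $\mathbb{X}'$ with boundary, on which $\phi$ descends. I would then build a graph $G$ embedded in $\mathbb{X}$ (or in the quotient orbifold $\mathbb{X}/\sigma$) whose vertices are the sign-change points on $\Sigma$ together with auxiliary vertices, and whose edges are the arcs of $\Sigma$ between consecutive sign-change points; the faces of this graph that are genuinely ``cut'' by nodal arcs emanating transversally correspond to inert domains. An Euler-characteristic bookkeeping — $V - E + F = \chi$, with $\chi$ controlled by the genus $g$ (e.g. $\chi(\mathbb{X}/\sigma)$ or $\chi$ of the cut surface, each of which costs only $O(g)$, contributing the $-g+1$) — then yields that the number of faces, and hence the number of distinct inert nodal domains met, is at least $\tfrac12 n_\phi - g + 1$. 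The factor $\tfrac12$ arises because consecutive sign changes along $\Sigma$ bound a common arc of $\Sigma$, so $n_\phi$ sign changes produce on the order of $\tfrac12 n_\phi$ distinct arcs, each pushing into an inert domain, and two such arcs can lie on the boundary of the same inert domain.

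**Handling non-separating $\Sigma$ and the main obstacle**

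The new feature compared with I is that $\Sigma$ need not separate $\mathbb{X}$, so cutting along $\Sigma$ may not disconnect it and the naive ``inside/outside'' dichotomy fails; here I would pass to the double cover or to $\mathbb{X}/\sigma$ and track how the components of $\Sigma$ sit in homology, absorbing the resulting correction into the genus term via the standard inequality relating the number of independent cycles one removes to the drop in first Betti number. One must also be careful that a nodal arc leaving $\Sigma$ transversally at one sign-change point could return to $\Sigma$ at another and thereby be counted twice, or could be a closed loop; these possibilities only help (they merge would-be-distinct domains in a way already accounted for) or are controlled by $\chi$. The main obstacle, and the step I expect to require the most care, is making the ``each transversal crossing contributes a fresh inert domain, with multiplicity at most $2$'' claim precise when nodal arcs can be long, can wander, can reconnect to $\Sigma$, or can have singular crossings on $\Sigma$ itself — i.e. setting up the combinatorial graph so that the Euler characteristic genuinely bounds the count from below; the positive-genus error term $-g+1$ is exactly the slack one needs to buy to push this through uniformly, and verifying that $g$ (rather than something larger) suffices is the crux.
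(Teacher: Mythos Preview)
Your proposal sketches the right broad picture but does not supply a proof: the Euler-characteristic bookkeeping is never carried out, the graph $G$ is not specified coherently (with edges only along $\Sigma$ it bounds no faces), and you yourself flag the ``main obstacle'' and ``crux'' as unresolved. The paper's argument avoids all of this by a cleaner route that you are missing.

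The paper first desingularizes the nodal set $\sigma$-equivariantly (this can only decrease the nodal count), so one may assume the nodal curves are simple and smooth. From each sign-change point $z\in\Sigma$ a nodal arc $\mathcal{C}_z$ emanates; it must terminate at another point of $\Sigma$, and together with its reflection $\sigma\mathcal{C}_z$ it forms a $\sigma$-invariant simple closed curve, an \emph{oval}. The key lemma is then purely topological: any $\sigma$-invariant simple closed curve meets $\Sigma=\mathrm{Fix}(\sigma)$ in at most two points, because $\sigma$ restricts to an involution of that curve $\simeq S^1$, and an involution of $S^1$ has at most two fixed points. Hence if $A$ ovals meet $\Sigma$ once and $B$ meet it twice, $A+2B=n_\phi$ and the number of ovals is $A+B\geq \tfrac12 n_\phi$. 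This is the correct source of the factor $\tfrac12$---not your ``consecutive sign changes bound a common arc of $\Sigma$'' heuristic.

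Finally, the genus correction is isolated in a separate elementary proposition, proved by induction on $g$: for any finite family $\mathfrak{C}$ of pairwise disjoint ovals on a genus-$g$ surface $\mathbb{Y}$, the number of components of $\mathbb{Y}\setminus\mathfrak{C}$ is at least $|\mathfrak{C}|-g+1$. One removes an oval, distinguishing the separating and non-separating cases, and inducts. There is no cutting along $\Sigma$, no quotient $\mathbb{X}/\sigma$, no double cover, and no Euler-characteristic computation on an ad hoc graph. The two ingredients you are missing---the $S^1$-involution lemma and the oval-count-versus-genus proposition---are exactly what turns your outline into a proof.
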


In order to study the sign-changes of $\psi$'s along curves $\mathcal{C}$ in $\mathbb{X}$, we examine the restriction of $\psi$ and its unit normal derivative $\partial_{n}\psi$ to $\mathcal{C}$. By Green's theorem, separating variables and some elementary analysis we derive lower bounds for these restrictions for simple closed curves $\mathcal{C}$ of constant geodesic curvature $\kappa\neq 0$ on a hyperbolic surface $\mathbb{Y}$, which is either $\mathbb{H}$ or a cylinder. In each case, the curve divides $\mathbb{Y}$ into inside and  outside regions, $S^{+}$ and $S^{-}$. We have the following type of lower bound for $\kappa = \pm 1$ (Propositions 3.2 and 3.4), and \eqref{CH1} for other $\kappa$'s for which an extra term is needed.

\begin{theorem} Let $\mathbb{Y}$, $\mathcal{C}$ and $S^{+}$ be as above. Then, for any eigenfunction $\psi$ which is in $L^{2}\mathrm{ (}S^{+}\mathrm{ )}$, we have
\[
\int\limits_{\mathcal{C}}\left(|\psi(s)|^{2} + \frac{1}{\lambda}|\partial_{n}\psi(s)|^{2}\right) \mathrm ds \geq \iint\limits_{S^{+}}W(x)|\psi(x)|^{2}\ \mathrm d\mu,
\]
with respective invariant measures. The function $W(x)$ is a positive smooth weight function depending only on $\mathbb{Y}$ and $\mathcal{C}$, and not on $\psi$.
\end{theorem}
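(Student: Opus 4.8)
The plan is to reduce the inequality to a one‑dimensional Sturm–Liouville problem by exploiting the symmetry of $\mathcal{C}$. Since $\mathcal{C}$ is a closed curve of constant geodesic curvature $\kappa=\pm1$ on $\mathbb{Y}$ (either $\mathbb{H}$ or a hyperbolic cylinder), the region $S^{+}$ it bounds has a one‑parameter family of ``parallel'' curves $\mathcal{C}_{r}$ at signed distance $r$ from $\mathcal{C}$, and these foliate $S^{+}$. First I would introduce Fermi‑type coordinates $(r,s)$ adapted to this foliation, in which the hyperbolic metric takes the form $\mathrm d\mu = J(r)\,\mathrm dr\,\mathrm ds$ with an explicit Jacobian $J$ (a product of hyperbolic/exponential functions determined solely by $\kappa$ and the geometry of $\mathbb{Y}$, not by $\psi$). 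In these coordinates the Laplacian separates in its radial part, and the boundary $\mathcal{C}$ is $\{r=0\}$ while $S^{+}$ is $\{0\le r< r_{\max}\}$ (with $r_{\max}=\infty$ in the disc model case and finite in the cylinder case).

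The heart of the argument is Green's identity applied on the sub‑region $S^{+}_{r}$ between $\mathcal{C}$ and $\mathcal{C}_{r}$: since $\Delta\psi=-\lambda\psi$,
\[
\int_{\mathcal{C}_{r}}\partial_{n}\psi\cdot\psi\,\mathrm ds-\int_{\mathcal{C}}\partial_{n}\psi\cdot\psi\,\mathrm ds=\iint_{S^{+}_{r}}\bigl(|\nabla\psi|^{2}-\lambda|\psi|^{2}\bigr)\,\mathrm d\mu.
\]
I would combine this with the Cauchy–Schwarz bound $|\partial_{n}\psi\cdot\psi|\le\frac12(|\psi|^{2}+\lambda^{-1}|\partial_{n}\psi|^{2})\cdot\sqrt\lambda$ along each $\mathcal{C}_{r}$ (absorbing the $\sqrt\lambda$), and then integrate in $r$ against a suitable positive weight, choosing that weight so that the accumulated boundary contributions at the variable curves $\mathcal{C}_{r}$ are controlled by the $L^{2}(S^{+})$‑norm of $\psi$, which is finite by hypothesis. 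The point is that the radial ODE governing the separated variable has a positive ``ground state''–type solution on $[0,r_{\max})$ whose logarithmic derivative supplies the weight $W$; multiplying the Green identity by this solution and integrating by parts twice kills the interior gradient term and the far‑boundary term (using $\psi\in L^{2}(S^{+})$ to discard the boundary term at $r_{\max}$), leaving exactly an inequality of the stated shape with $W$ built from $J$ and the ground state.

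Concretely, after separating variables one is reduced to showing, for each Fourier mode $a_{k}(r)$ of $\psi$ along $\mathcal{C}$, a scalar inequality of the form $|a_{k}(0)|^{2}+\lambda^{-1}|a_{k}'(0)|^{2}\ge\int_{0}^{r_{\max}}w_{k}(r)|a_{k}(r)|^{2}\,\mathrm dr$ with $w_{k}>0$, and then summing over $k$; the weight $W(x)$ on $\mathcal{C}_{r}$ is the infimum over $k$ of $w_{k}(r)$, shown to be positive and smooth by an ODE comparison argument. The elementary analysis alluded to in the paper is precisely the verification that the relevant radial Sturm–Liouville operator $-J^{-1}(Ja')'$ shifted by $-\lambda$ has a positive solution whose Wronskian data at $r=0$ dominates the claimed quadratic form — this uses that $\kappa^{2}\ge$ the bottom of the spectrum so no oscillation occurs, which is exactly why $\kappa\ne0$ is needed and why an extra term appears for general $\kappa$.

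I expect the main obstacle to be the uniformity in the eigenvalue $\lambda$ and in the Fourier mode $k$: one must produce a single weight $W$ independent of $\psi$, so the positive solution of the radial ODE and its boundary Wronskian must be estimated uniformly as $\lambda\to\infty$ and $k\to\infty$. Handling the non‑compact case $\mathbb{Y}=\mathbb{H}$ also requires care at $r_{\max}=\infty$, where one must justify discarding the boundary term using only $\psi\in L^{2}(S^{+})$ together with the exponential growth of $J(r)$; this forces a choice of ground state with the correct (sub‑maximal) exponential rate. Once these uniform ODE estimates are in hand, the assembly via Green's theorem and Fourier synthesis is routine.
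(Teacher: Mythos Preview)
Your plan departs from the paper's argument, and the departure introduces a genuine gap. The paper does \emph{not} use the standard Green's identity $\int_{\partial}\psi\,\partial_n\psi=\int(|\nabla\psi|^2-\lambda|\psi|^2)$ followed by Cauchy--Schwarz. Instead it applies Green's theorem to a carefully chosen quadratic combination in the Fermi coordinates: with $U=\lambda J^2\psi^2+J^2(\partial_r\psi)^2-(\partial_s\psi)^2$ and $V=-2(\partial_r\psi)(\partial_s\psi)$ (here $J=\sinh r$ for the disc, and $P=\lambda y^{-2}\psi^2+(\partial_y\psi)^2-(\partial_x\psi)^2$, $Q=-2(\partial_x\psi)(\partial_y\psi)$ for the horocycle), one checks using the eigenfunction equation that $\partial_r U-\partial_s V$ is \emph{exactly} a positive multiple of $\psi^2$. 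This is a Rellich/Pohozaev-type identity; the inequality then follows by simply discarding the non-negative tangential term $\int_{\mathcal{C}}(\partial_s\psi)^2$ on the boundary side, and (in the horocycle case) using the exponential decay of the Bessel functions in the Fourier expansion to kill the boundary contribution at infinity. No ODE comparison, no ground state, and no uniformity estimate in $\lambda$ or in the mode $k$ is needed: the identity is exact and the weight $W$ (namely $2\cosh r/\sinh R$, respectively $2Y/y$) is read off directly.

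Your route has two concrete problems. First, the Cauchy--Schwarz step $|\psi\,\partial_n\psi|\le\tfrac{\sqrt\lambda}{2}\bigl(|\psi|^2+\lambda^{-1}|\partial_n\psi|^2\bigr)$ introduces a factor $\sqrt\lambda$ that you propose to ``absorb''; but the target inequality has no such factor, and any weight produced this way will be $\lambda$-dependent, which defeats the whole point. Second, the right-hand side $\int(|\nabla\psi|^2-\lambda|\psi|^2)$ of the standard Green's identity has no definite sign, so you cannot bound it below by $\int W|\psi|^2$ with $W>0$ without further input; the ``ground-state'' device you sketch would have to supply that input, but as stated it is too vague (what radial operator, what equation does the ground state solve, and why is the resulting $w_k$ bounded below uniformly in both $k$ and $\lambda$?). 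The Rellich identity sidesteps both issues at once by arranging that the interior integrand is manifestly positive from the start.
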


For our main application we need a lower bound as above for the case that $\mathcal{C}$ is a closed geodesic (i.e. $\kappa=0$). In this case an inequality as in Theorem 1.2 involving only a positive weight function is not valid (see Section 3.5). However as in I one can infer a similar lower bound if one inputs the equidistribution that comes from QUE. In I, this was achieved using some arithmetical input as well (and for the "infinite" closed geodesic). In Section 4 we use some representation theory and the full micro-local Wigner measures to prove;

\begin{theorem}\label{closed-g-intro} Let $\mathbb{X}$ be a hyperbolic surface, $\mathcal{C}$ a closed geodesic on $\mathbb{X}$ and $\psi_{\lambda}$ a sequence of eigenfunctions on $\mathbb{X}$ satisfying QUE (that is $\|\psi\|_{_2}=1$ and $\langle\mathrm{ Op(a)}\psi_{\lambda},\psi_{\lambda}\rangle \to \int\limits_{\mathrm{ T}^{*}_{1}(\mathbb{X})} \mathrm{ a}(\xi)\mathrm d\mu(\xi)$ as $\lambda \to \infty$ for any zeroth order  p.d.o with symbol a). Then for $\lambda_{\psi}$ large enough we have 
\[
\int\limits_{\mathcal{C}}\left(|\psi(s)|^{2} + \frac{1}{\lambda}|\partial_{n}\psi(s)|^{2}\right) \mathrm ds \gg 1,
\]
here the implied constant depending only on $\mathcal{C}$ and $\mathbb{X}$.
\end{theorem}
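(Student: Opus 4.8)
The plan is to reduce the statement for a closed geodesic $\mathcal{C}$ to an averaged version of the geodesic-curvature estimate of Theorem 1.2, applied after cutting the surface along $\mathcal{C}$, and then to close the gap using the equidistribution input from QUE. Concretely, $\mathcal{C}$ has a tubular collar $\mathcal{T}_\delta$ of width $2\delta$ on which coordinates $(\rho,s)$ are available, $\rho$ the signed distance to $\mathcal{C}$ and $s$ arclength along $\mathcal{C}$, with hyperbolic metric $d\rho^2+\cosh^2\rho\,ds^2$. For each fixed $\rho_0\in(0,\delta)$ the level curve $\mathcal{C}_{\rho_0}=\{\rho=\rho_0\}$ is a curve of constant geodesic curvature $\kappa=\tanh\rho_0\in(0,1)$, so Theorem 1.2 (in the form of \eqref{CH1}, with the extra term for $\kappa\neq\pm1$, or its obvious one-sided adaptation) gives a lower bound for the boundary restriction integral of $\psi$ over $\mathcal{C}_{\rho_0}$ in terms of a weighted $L^2$-mass of $\psi$ over one of the two sides $S^\pm$. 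Likewise one does this for $\mathcal{C}_{-\rho_0}$ on the other side. Integrating these inequalities over $\rho_0\in(0,\delta)$ and comparing, via Green's identity in the collar, the resulting bulk boundary integrals to $\int_{\mathcal{C}}\big(|\psi|^2+\tfrac1\lambda|\partial_n\psi|^2\big)\,ds$ should yield
\[
\int_{\mathcal{C}}\Big(|\psi(s)|^{2}+\frac{1}{\lambda}|\partial_{n}\psi(s)|^{2}\Big)\,\mathrm ds \;\gg\; \iint_{\mathcal{T}_\delta} W_\delta(x)\,|\psi(x)|^{2}\,\mathrm d\mu \;-\; \mathrm{(error)},
\]
where $W_\delta>0$ depends only on $\mathcal{C}$ and $\delta$ and the error involves the crossing terms with $\partial_n\psi$ and a power of $\lambda^{-1}$; one has to be a little careful because the naive weight degenerates as $\rho_0\to0$, which is exactly why a single curve ($\kappa=0$) fails and an average over the collar is needed.

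The second half of the argument is the QUE input. The microlocal lift of $|\psi_\lambda|^2\,d\mu$ converges to Liouville measure on $T_1^*(\mathbb{X})$; in particular, for any fixed nice test function supported in the collar, $\iint_{\mathcal{T}_\delta} a(x)\,|\psi_\lambda(x)|^2\,d\mu \to \tfrac{1}{\mathrm{vol}(\mathbb{X})}\iint_{\mathcal{T}_\delta} a\,d\mu$, a fixed positive constant. Choosing $a$ to be a smooth bump adapted to $W_\delta$ shows that the bulk term on the right is $\gg_{\mathcal{C},\mathbb{X}} 1$ for $\lambda$ large. It remains to absorb the error term: the crossing terms $\int_{\mathcal{C}_{\rho_0}}\partial_n\psi\cdot\psi$ (or $|\partial_n\psi|^2$) that appear must be controlled by the left side itself — here one uses that $\partial_n$ in the collar interpolates between the boundary data and the interior Laplace equation, so that $\iint_{\mathcal{T}_\delta}|\nabla\psi|^2$, hence the full weighted Sobolev-type quantity, is comparable (up to the eigenvalue normalization $\tfrac1\lambda$ already present) to $\iint_{\mathcal{T}_\delta}|\psi|^2$ plus boundary contributions, by a standard elliptic/Green's-theorem estimate in the fixed collar. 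Thus the error is either bounded by a small constant times the left-hand side (to be absorbed) or is $o(1)$, and the inequality $\gg 1$ survives.

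The step I expect to be the main obstacle is the passage from the pointwise-in-$\rho_0$ curvature estimates to a clean averaged inequality whose right-hand side is a genuinely positive weight on a fixed-size collar — i.e. showing that the degeneration of the Theorem 1.2 weight $W$ as $\kappa\to0$ is integrable in $\rho_0$ and does not eat up the whole gain. Equivalently, one must verify that the ``extra term'' present in \eqref{CH1} for $\kappa\neq\pm1$, and the Jacobian factors $\cosh\rho_0$, combine so that after integrating $\rho_0$ over $(0,\delta)$ one still controls $\int_{\mathcal{C}}(|\psi|^2+\tfrac1\lambda|\partial_n\psi|^2)\,ds$ from above by a quantity that QUE makes bounded below. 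A secondary technical point is the bookkeeping of which side $S^+$ or $S^-$ a given $\mathcal{C}_{\pm\rho_0}$ bounds, and making sure the two one-sided estimates can be added without double-counting; this is purely organizational once the collar picture is set up. Assuming Theorems 1.2 and the QUE hypothesis, no further arithmetic input is required, in contrast to I.
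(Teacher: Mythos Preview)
Your proposal has a genuine gap at precisely the step you flag as the ``main obstacle'' --- and the paper explicitly demonstrates that this step fails. In Section~3.4 the authors carry out exactly the Green's theorem/collar computation you describe: with $G(\zeta)=\cosh^2\!\zeta\bigl(\lambda\!\int_{\ell_\zeta}\!\phi^2+\!\int_{\ell_\zeta}\!(\partial_\rho\phi)^2\bigr)-\int_{\ell_\zeta}(\partial_\theta\phi)^2$ one gets $G(\zeta)=G(0)+2\lambda\iint_{\mathcal R_\zeta}\sinh\rho\,\phi^2\,d\mu$. The ``extra term'' for $\kappa\neq\pm1$ in \eqref{CH1} is the tangential derivative $\int(\partial_\theta\phi)^2$, which enters with the wrong sign and is \emph{not} lower order. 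Using the Fourier expansion \eqref{z45} one has $G(0)=\kappa\sum_n\alpha_n^2(\lambda-\mu_n^2)$, and for the resonance range $\sqrt\lambda<\mu_n\le(1+\delta)\sqrt\lambda$ the conical functions $e^{i\mu_n}_{-1/2+it}(\rho)$ grow \emph{exponentially} in $t$ on the collar. The paper then shows (end of Section~3.4) that there is no nonnegative weight $G(\rho)\not\equiv0$ with $\int_{\ell_0}\phi^2+\lambda^{-1}\!\int_{\ell_0}(\partial_\rho\phi)^2\ge\iint G(\rho)\phi^2\,d\mu$ for all eigenfunctions. Averaging in $\rho_0$ cannot repair this: the counterexamples live at Fourier modes $\mu_n\approx\sqrt\lambda$, and for such modes the relation between $\mathcal C_{\rho_0}$-data and $\mathcal C_0$-data is governed by the same exponentially large conical functions, so the ``error'' you hope to absorb is not $o(1)$ and is not dominated by the left side. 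Base QUE (equidistribution of $|\psi|^2\,d\mu$ on $\mathbb X$) is blind to this resonance phenomenon, which is a phase-space effect.

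The paper's proof is accordingly of a different nature. It works on the unit cotangent bundle $Z_+=\Gamma\backslash G$ and uses the \emph{full microlocal} QUE (decay of matrix coefficients $\langle\Psi\psi^m_\tau,\psi^n_\tau\rangle\to0$ for all fixed $K$-types $m,n$). Via the representation-theoretic expansion \eqref{expansion-on-l} and the multiplicity-one relation \eqref{a-n-coeff-deff}, the restriction norm becomes $\sum_n |a_{\chi_{s_n,\epsilon}}|^2|d_{s_n,\epsilon;\tau,\eps}(v)|^2$ for any vector $v$ in the automorphic representation. The key construction (Section~\ref{test-vect-v}) is a \emph{test vector} $v\in C^\infty_{\mathrm{even}}(S^1)$ supported away from the singular points $0,\pm\pi/2,\pi$, so that the stationary-phase analysis of the functional $d_{s,\epsilon;\tau,\eps}(\pi(g)v)$ never sees the resonance $|s|\approx|\tau|$; this yields clean upper bounds \eqref{v-regular-range}--\eqref{v-cutoff} uniformly for $g$ in a small neighborhood $U$. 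Microlocal QUE then gives a lower bound \eqref{v-lower-bound-on-Z} for $\int_{l\cdot U}|\psi_v|^2 f\,d\mu$ (the ``fattened geodesic'' on $Z$, not on $\mathbb X$), and combining these produces the coefficient lower bound $\sum_{|s_n|\le\omega|\tau|}|a_{\chi_{s_n,\epsilon}}|^2\gg|\tau|$, from which Theorem~\ref{closed-g-intro} follows via \eqref{d-chi-e-0-lower-bound}--\eqref{d-chi-e-0-lower-bound-prime}. The essential point your approach misses is that one must work microlocally to sidestep the resonance, and the representation theory is what supplies the right test vectors to do so.
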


Starting with Section 5 and for the rest of the paper, we assume that the surface $\mathbb{X}$ is arithmetic (in fact a congruence surface), which carries an involution, and that $\phi$ is a $\sigma$-even and a Hecke eigenfunction. This allows us to bring in various arithmetical tools as in I. The most significant new one being Waldspurger's formula \cite{Wa} which allows one to express $\int_{\Sigma} \phi(s) \mathrm ds$ in terms of the central values of $L$-functions of  automorphic forms on $GL_{2}/K$, where $K$ is a quadratic extension associated to the closed geodesic $\Sigma$ in $\mathbb{X}$. With this and following the technique in I, we obtain our main result;

\begin{theorem}\label{nodal-domains-thm-intro} Let $\mathbb{X}$, $\sigma$ and $\phi_{\lambda}$ be as above. Then assuming the Lindel\"{o}f Hypothesis for the $L$-functions of the GL$_{2}$ cusp forms, we have that as $\lambda_{\phi}\to \infty$
\[
N_{\mathrm{ in}}(\phi_{\lambda}) \gg_{_ \eps} \lambda_{\phi}^{\frac{1}{27}- \eps},
\]
for any $\eps >0$. The implied constant depends only on $\eps$, $\mathbb{X}$ and $\sigma$.
\end{theorem}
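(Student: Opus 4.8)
The plan is to combine the topological lower bound of Theorem 1.1 with an arithmetic lower bound for the number of sign changes $n_\phi$ of $\phi$ along the fixed-point set $\Sigma$. Since $g$ is fixed for a given $\mathbb{X}$, Theorem 1.1 reduces everything to showing $n_\phi \gg_\eps \lambda_\phi^{1/27-\eps}$. As in I, the strategy to bound $n_\phi$ from below is a tension between two estimates: an upper bound for the number of sign changes that is unconditional and of the right shape, versus a lower bound driven by the size of the restricted $L^2$-mass of $\phi$ on $\Sigma$. More precisely, if $\phi$ changes sign at most $n_\phi$ times along the (finitely many, compact) geodesic segments making up $\Sigma$, one partitions $\Sigma$ into $O(n_\phi)$ subarcs on which $\phi$ has constant sign, and on each such arc one plays the pointwise $L^\infty$-bound against the $L^1$-mass $\int_\Sigma |\phi(s)|\,\mathrm ds$, or alternatively estimates $\int_\Sigma \phi\,\mathrm ds$ against $\int_\Sigma |\phi|$. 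The key inequality is roughly
\[
\left| \int_\Sigma \phi(s)\,\mathrm ds \right| \;\le\; \int_\Sigma |\phi(s)|\,\mathrm ds \;\le\; n_\phi \cdot \sup_{\Sigma}|\phi| \cdot \left(\text{length considerations}\right),
\]
so that a lower bound for $|\int_\Sigma \phi|$ together with a subconvex upper bound for $\sup_\Sigma |\phi|$ forces $n_\phi$ to be large. One also needs a lower bound for $\int_\Sigma |\phi|$ itself, and here is where Theorem 1.3 (the QUE-driven $L^2$-restriction lower bound along the closed geodesic $\Sigma$) enters: it gives $\int_\Sigma(|\phi|^2 + \lambda^{-1}|\partial_n\phi|^2)\,\mathrm ds \gg 1$, and combined with a matching $L^2$-upper bound (also a restriction estimate, of Burq–Gérard–Tzvetkov / Reznikov type, giving $\int_\Sigma |\phi|^2 \ll \lambda^{1/2}$ or better on a geodesic) one controls the $L^1$-mass from below by Cauchy–Schwarz against the number of sign-change intervals.

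The genuinely new arithmetic ingredient, and the source of the exponent $1/27$, is the lower bound for the period $\int_\Sigma \phi(s)\,\mathrm ds$. Here I would invoke Waldspurger's formula \cite{Wa}: the closed geodesic $\Sigma$ corresponds to a torus $K^\times$ for a real quadratic field $K$, and the (square of the) toric period of $\phi$ along $\Sigma$ is proportional, up to archimedean and finite local factors, to the central value $L(1/2,\phi)L(1/2,\phi\otimes\chi_K)$ (or $L(1/2,\phi_K)$ for the base-change to $K$). Under the Lindelöf Hypothesis this central value is $\gg_\eps \lambda_\phi^{-\eps}$ once one checks it is non-vanishing and controls the local factors; the delicate point is that Waldspurger's local factors at ramified primes and at infinity must be shown not to vanish and to be bounded below by a fixed power of $\lambda$ — the archimedean factor typically contributes a power of $\lambda$ and must be computed (or estimated via stationary phase / special function asymptotics) carefully. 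So the chain is: $|\int_\Sigma \phi|^2 \gg (\text{local factors}) \cdot L(1/2,\phi_K) \gg_\eps \lambda_\phi^{-c-\eps}$ for an explicit $c$, while $\sup_\Sigma |\phi| \ll_\eps \lambda_\phi^{a+\eps}$ by the subconvex $L^\infty$-bound of Iwaniec–Sarnak \cite{IS95} (amplified along the geodesic), and $\int_\Sigma|\phi|^2 \ll \lambda_\phi^{b}$ by the restriction bound; assembling these through the sign-change counting inequality yields $n_\phi \gg_\eps \lambda_\phi^{\delta - \eps}$ with $\delta$ coming out to $1/27$ after optimizing the way the three exponents combine.

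Concretely, the order of steps I would follow is: (i) reduce to bounding $n_\phi$ via Theorem 1.1; (ii) set up the sign-change dichotomy on $\Sigma$, writing $\Sigma = \bigsqcup_j I_j$ with $\phi$ of constant sign on $I_j$ and $N := \#\{j\} \asymp n_\phi$; (iii) on each $I_j$ use $\big|\int_{I_j}\phi\big| \le |I_j|^{1/2}\big(\int_{I_j}|\phi|^2\big)^{1/2}$ and Cauchy–Schwarz across $j$ to get $\big|\int_\Sigma\phi\big| \le N^{1/2}\big(\int_\Sigma|\phi|^2\big)^{1/2}$, or better the refined version balancing $L^\infty$ and $L^1$ that was used in I; (iv) insert the upper bound $\int_\Sigma|\phi|^2 \ll \lambda^{b+\eps}$ and, if used, $\sup_\Sigma|\phi| \ll \lambda^{a+\eps}$; (v) insert the Waldspurger lower bound $\big|\int_\Sigma\phi\big| \gg_\eps \lambda^{-c-\eps}$, valid under Lindelöf once non-vanishing of local factors is established; (vi) solve for $N$ to obtain $n_\phi \gg_\eps \lambda^{1/27-\eps}$ and conclude. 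The main obstacle, I expect, will be step (v): making Waldspurger's formula fully explicit in this setting — pinning down the local factors at the primes ramified in $K$ and at the archimedean place, proving they are nonzero for $\sigma$-even Hecke eigenforms, and extracting the precise power of $\lambda$ from the archimedean factor — since it is exactly the interplay of this archimedean power with the subconvex $L^\infty$ and restriction exponents that pins the final exponent to $1/27$ rather than something larger. A secondary technical point is handling $\Sigma$ that is a union of several geodesic segments rather than a single smooth closed geodesic, and the corners where these segments meet, so that the restriction estimates of Theorems 1.2–1.3 and the counting argument apply uniformly.
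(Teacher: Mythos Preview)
Your proposal has the direction of the Waldspurger input reversed, and this is a genuine gap. The Lindel\"of Hypothesis is an \emph{upper} bound $L(1/2,\Pi\otimes\omega)\ll_\eps C(\Pi\otimes\omega)^{\eps}$; it gives no lower bound whatsoever on central values, and in particular cannot guarantee that $\int_\Sigma\phi\,\mathrm ds$ is nonzero (this period can and does vanish, corresponding to $L(1/2,\Pi\otimes\omega)=0$). So step~(v) fails: there is no mechanism by which Lindel\"of yields $\big|\int_\Sigma\phi\big|\gg_\eps\lambda^{-c-\eps}$. Relatedly, the Cauchy--Schwarz manipulation in step~(iii) does not produce the claimed factor $N^{1/2}$; what one actually gets is $\big|\int_\Sigma\phi\big|\le|\Sigma|^{1/2}\big(\int_\Sigma|\phi|^2\big)^{1/2}$, with no dependence on the number of sign-change intervals.

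In the paper, Waldspurger's formula is applied in the opposite direction: under Lindel\"of it yields the uniform \emph{upper} bound $|a_{\chi_{s_n,\epsilon}}(\tilde\psi_\tau)|\ll_\eps\lambda_\psi^\eps$ for \emph{every} Fourier coefficient of $\phi$ along the closed geodesic (not just the zeroth period). The sign-change argument, taken verbatim from Part~I (p.~1558 of \cite{GRS1}), then combines three inputs: (a) the $L^2$-restriction lower bound of Theorem~1.3 (in its sharper form Theorem~4.2, $\sum_{|s_n|\le\omega|\tau|}|a_{\chi_{s_n,\epsilon}}|^2\gg|\tau|$), (b) the $L^\infty$-bound $\|\phi\|_\infty\ll\lambda^{1/4-1/27}$ from the Blomer--Michel adaptation of \cite{IS95}, and (c) the Lindel\"of upper bound on each $|a_{\chi_{s_n,\epsilon}}|$. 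The mechanism is: if $\phi$ had only $N$ sign changes on $\Sigma$, one can build a trigonometric polynomial of degree $O(N)$ with the same signs, so that $\phi$ times this polynomial is nonnegative; the resulting integral is controlled above by the first $O(N)$ Fourier coefficients (each $\ll\lambda^\eps$) and below via the $L^2$-mass and the sup-norm. Solving for $N$ gives the exponent $1/27$. The point to retain is that Lindel\"of enters as an upper bound on many coefficients, forcing the $L^2$-mass to be spread across many frequencies, which in turn forces many sign changes.
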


\begin{remark}\label{sup-norm-remark}\ 

\noindent(a). As noted in I, this lower bound is no doubt far from the truth. From \cite{TZ09} it follows that $N_{\mathrm{ in}}(\phi_{\lambda}) \ll \lambda_{\phi}^{\frac{1}{2}}$ and this upper bound is probably sharp. As far as split nodal domains we do not know how to produce any of them in spite of the fact that they are probably the vast majority. The expectation is that $N_{\mathrm{ split}}(\phi_{\lambda}) \sim c\frac{\mathrm{ Area}(\mathbb{X})}{4\pi}\lambda_{\phi}$\ , where $c>0$ is the Nazarov-Sodin constant \cite{NS09}

\noindent(b). If $\mathbb{X}$ comes from a quaternion algebra defined over $\mathbb{Q}$ (rather than a more general totally real field $k$), then the exponent $\frac{1}{27}$ can be replaced by $\frac{1}{24}$, see Section \ref{waldspurger-sect} .
\end{remark}

\vspace{10pt}
In Section 6 we examine in some detail a variety of examples of arithmetic surfaces $\mathbb{X}$ with involutions $\sigma$ and the corresponding set $\Sigma$, to which our results apply. Examples are given, for non-compact $\mathbb{X}$ for which $\Sigma$ has  (i) disjoint components, (ii) is separating and (iii) is non-separating. The compact examples include a description of $\Sigma$ which is made of closed geodesic arcs. A number of these examples were investigated (including detailed figures) in Fricke and Klein \cite{FrickeKlein}.

To end, we point to a significant advance in the recent preprint \cite{JJ}. Using results in \cite{CTZ} they first give a localised (i.e. to a piece of geodesic) version of Theorem 1.3 which is valid more generally (i.e. is not restricted to hyperbolic surfaces). They then go on to introduce a novel method of studying weak limits of renormalizations of the restrictions of the $\phi_{\lambda}$'s in Theorem 1.4 to $\Sigma$, and in particular the positive definiteness of their Fourier transforms. Using this they prove without any hypothesis that in the setting of Theorem 1.4, $N_{\mathrm{ in}}(\phi_{\lambda})$ (and so afortiori $N(\phi_{\lambda})$ ) goes to infinity with $\lambda$!

\vskip 0.2in
\noindent{\small {\bf Acknowledgments.}}\\
AG thanks the Institute for Advanced Study (IAS) and Princeton University for their hospitality. He also gratefully acknowledges additional financial support from IAS, the College of A\&S and the Department of Mathematics of his home university, and the Simons Foundation for a Collaboration Grant.\\
AR was partially supported by the Veblen Fund at IAS, the ERC grant  291612  and by the ISF   grant 533/14.  AR and PS were also supported by a  BSF grant.\\
AR thanks A. Venkatesh for enlightening discussions of QUE which greatly simplified the proof of Theorem \ref{thm-geodesic-low-bound}.\\
PS was supported by the NSF.\\
The software Mathematica$^{\copyright}$ was used for computations and  some of the images.
\vskip 0.2in

\section{Geometric preliminaries}
\subsection{The fixed point set.}\quad \label{geom-pre}

Let $G = \PSLR$ and $K$ be the maximal compact subgroup of $G$. Let $\sigma$ denote the orientation reversing isometry on $G\slash K = \mathbb{H}$ given by $\sigma(z)=-\overline{z}$. Let $\Gamma$ be a discrete subgroup of $G$ satisfying the condition $\sigma\Gamma\sigma=\Gamma$, so that if $\left( \begin{smallmatrix} a&b\\ c&d \end{smallmatrix}\right)$ is in $\Gamma$, then so is  $\left( \begin{smallmatrix} a&-b\\ -c&d \end{smallmatrix}\right)$. For $\alpha$ and $\beta$ in $G$, we will write $\alpha \sim \beta \mod \Gamma$ to mean $\alpha = \gamma \beta$ for some $\gamma \in \Gamma$ (if the group is clear, we shall drop the ``mod $\Gamma$'').
 We denote by $\hat{\alpha}$ the element $\sigma\alpha\sigma$, so that for the groups under consideration,  $\hat{\alpha}\in \Gamma$  if $\alpha \in \Gamma$. It is clear that $\widehat{\alpha\beta}=\hat{\alpha}\hat{\beta}$, so that if $\alpha \sim \beta$, then $\hat{\alpha} \sim \hat{\beta}$.
We note that $\hat{k} = k^{-1}$ for all $k\in K$, and if $A$ denotes the diagonal matrices in $G$ then $\hat{a}=a$ for all $a\in A$.  Also for $S=\left( \begin{smallmatrix} 0&-1\\ 1&0 \end{smallmatrix}\right)$ one has $\hat{S}=S$ in $G$. 

Let $\mathbb{X} = \Gamma\backslash\mathbb{H} = \Gamma\backslash G \slash K$. Then $\sigma$ lifts to $\sigma^{*}$  on $\mathbb{X}$, and is given by $\sigma^{*}(\Gamma z)=\Gamma\sigma z$. Since $\sigma\Gamma\sigma=\Gamma$, this is well-defined as $\Gamma z = \Gamma w$ implies $\Gamma\sigma z=\Gamma\sigma w$.

For any map $f : M \to M$, let $\mathrm{ Fix}(f;M) = \{z\in M : f(z)=z\}$. Then if $\mathcal{I}$ denotes the imaginary axis in $\mathbb{H}$, we have $\mathrm{ Fix}(\sigma;\mathbb{H})= \mathcal{I}$. We now prove the following

\begin{proposition}\label{p1} Suppose $\Gamma$ is a cocompact, torsion free discrete subgroup of $G$ satisfying $\sigma \Gamma\sigma = \Gamma$.  Then there exist a finite set of elements $h_{1}$, $h_{2}$, $\ldots$, $h_{R}$ in $G$ such that 
\[ \mathrm{ Fix}(\sigma^{*},\mathbb{X}) = \bigcup_{i=1}^{R} \Gamma h_{i}\mathcal{I}\ ,
\]
with the latter a disjoint union, with $\hat{h}_{i}h_{i}^{-1}\in\Gamma$ for all $i$.
\end{proposition}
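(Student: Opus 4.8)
The plan is to lift everything to $\mathbb{H}$, describe the fixed set there in terms of reflections, and then push it back down, using torsion-freeness for disjointness and discreteness plus cocompactness for finiteness. First note that $\Gamma z\in\mathrm{Fix}(\sigma^{*},\mathbb{X})$ exactly when $\sigma z=\gamma z$ for some $\gamma\in\Gamma$, i.e. $z\in\mathrm{Fix}(\gamma^{-1}\sigma,\mathbb{H})$ for some $\gamma\in\Gamma$. So if we set $\widetilde{\Sigma}=\bigcup_{\gamma\in\Gamma}\mathrm{Fix}(\gamma\sigma,\mathbb{H})$, then $\mathrm{Fix}(\sigma^{*},\mathbb{X})$ is the image of $\widetilde{\Sigma}$ in $\mathbb{X}$. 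Each $\gamma\sigma$ is an orientation-reversing isometry of $\mathbb{H}$, hence either a glide reflection, contributing nothing to $\widetilde{\Sigma}$, or a reflection in a complete geodesic $L_{\gamma}$. In the latter case, since $G$ acts transitively on geodesics we may write $L_{\gamma}=h\mathcal{I}$, and then $\gamma\sigma=h\sigma h^{-1}$, so $\gamma=h\sigma h^{-1}\sigma=h\hat h^{-1}$ and therefore $\hat h h^{-1}=\gamma^{-1}\in\Gamma$. Conversely, whenever $\hat h h^{-1}\in\Gamma$ the element $\gamma:=(\hat h h^{-1})^{-1}\in\Gamma$ has $\gamma\sigma=h\sigma h^{-1}$, a reflection in $h\mathcal{I}$. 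Thus $\widetilde{\Sigma}$ is precisely the union of the geodesics $h\mathcal{I}$ with $\hat h h^{-1}\in\Gamma$.

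Next I would show that $\widetilde{\Sigma}$ is a $\Gamma$-invariant, pairwise disjoint, locally finite family of complete geodesics. For \emph{disjointness}: if $z_{0}\in\mathrm{Fix}(\gamma\sigma,\mathbb{H})\cap\mathrm{Fix}(\gamma'\sigma,\mathbb{H})$ then $\gamma^{-1}z_{0}=\sigma z_{0}=(\gamma')^{-1}z_{0}$, so $\gamma'\gamma^{-1}$ fixes the point $z_{0}\in\mathbb{H}$; as $\Gamma$ is discrete and torsion-free this forces $\gamma=\gamma'$. Hence two geodesics in $\widetilde{\Sigma}$ are either equal or disjoint — this is where torsion-freeness is used. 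For \emph{local finiteness}: if infinitely many distinct geodesics $\mathrm{Fix}(\gamma_{n}\sigma,\mathbb{H})$ met a fixed compact subset of $\mathbb{H}$, then, the space of complete geodesics meeting a compact set being compact, after passing to a subsequence these geodesics converge to a complete geodesic $L_{\infty}$; the corresponding reflections $\gamma_{n}\sigma$ then converge in $\mathrm{Isom}(\mathbb{H})$ to the reflection in $L_{\infty}$, whence $\gamma_{n}$ converges in $G$, contradicting that the $\gamma_{n}$ are distinct elements of the discrete (hence closed) subgroup $\Gamma$. Finally, $\widetilde{\Sigma}$ is $\Gamma$-invariant, since conjugation by $\gamma_{0}\in\Gamma$ sends $\gamma\sigma$ to $\gamma_{0}\gamma\sigma\gamma_{0}^{-1}=(\gamma_{0}\gamma\hat\gamma_{0}^{-1})\sigma$ with $\gamma_{0}\gamma\hat\gamma_{0}^{-1}\in\Gamma$ (using $\hat\gamma_{0}\in\Gamma$). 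In particular $\widetilde{\Sigma}$ is closed in $\mathbb{H}$.

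Now transfer to $\mathbb{X}$. If the images in $\mathbb{X}$ of two geodesics $L,L'\subset\widetilde{\Sigma}$ meet, say $\gamma z=z'$ with $z\in L$, $z'\in L'$, $\gamma\in\Gamma$, then $\gamma L$ and $L'$ are geodesics in the $\Gamma$-invariant family $\widetilde{\Sigma}$ sharing the point $z'$, hence $\gamma L=L'$, so the two images coincide. Thus the image of $\widetilde{\Sigma}$ in $\mathbb{X}$ is a disjoint union of curves, one for each $\Gamma$-orbit of geodesics in $\widetilde{\Sigma}$; each such curve is the image of a complete geodesic under the covering $\mathbb{H}\to\mathbb{X}$, hence a geodesic, and by local finiteness upstairs a small ball about any of its points meets $\widetilde{\Sigma}$ in a single arc, so each curve is open as well as closed in $\mathrm{Fix}(\sigma^{*},\mathbb{X})$. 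Since $\Gamma$ is cocompact, $\mathbb{X}$ is compact and $\mathrm{Fix}(\sigma^{*},\mathbb{X})$, being closed in it, is compact; a compact space that is a disjoint union of nonempty open-and-closed pieces has only finitely many of them (and each here is in fact a closed geodesic, the stabilizer of the corresponding $L$ in $\Gamma$ acting cocompactly on it). Choosing $h_{1},\dots,h_{R}\in G$ with $\hat h_{i}h_{i}^{-1}\in\Gamma$ so that the $h_{i}\mathcal{I}$ represent these finitely many $\Gamma$-orbits then gives $\mathrm{Fix}(\sigma^{*},\mathbb{X})=\bigcup_{i=1}^{R}\Gamma h_{i}\mathcal{I}$, a disjoint union, as claimed.

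The routine parts are the lifting in the first paragraph and the descent in the third; the real content is the tameness of $\widetilde{\Sigma}$ proved in the second paragraph, and the key obstacle there is the local finiteness statement. Without it one could not conclude that $\mathrm{Fix}(\sigma^{*},\mathbb{X})$ is a disjoint union of finitely many closed geodesics rather than, a priori, some more complicated closed set; it is exactly at this step that discreteness of $\Gamma$ is essential, with torsion-freeness entering only through the disjointness argument and cocompactness only through the final compactness count.
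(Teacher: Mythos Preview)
Your proof is correct and takes a genuinely different route from the paper's. The paper works through the group action: it first proves a lemma characterizing $\Gamma w\in\mathrm{Fix}(\sigma^{*},\mathbb{X})$ by the existence of $g\in G$ with $w=gi$ and $\hat g g^{-1}\in\Gamma$, with uniqueness of $g$ up to right multiplication by $S$ when $\Gamma$ is torsion-free. From this it obtains the double-coset description $\mathrm{Fix}(\sigma^{*},\mathbb{X})=\bigcup_{h\in\Gamma\backslash G/A^{*}}\Gamma h\mathcal{I}$ over those $h$ with $\hat h h^{-1}\in\Gamma$, proves disjointness via that uniqueness, and then argues finiteness by showing directly that the set of representatives is discrete (if $h_i\to h$ then $\hat h_i h_i^{-1}\to\hat h h^{-1}$ in the discrete group $\Gamma$) and compact (since $\Gamma\backslash\mathbb{H}$ is). Your argument instead classifies the lifts $\gamma\sigma$ as reflections or glide reflections, packages the fixed geodesics into a $\Gamma$-invariant locally finite family $\widetilde\Sigma$, and deduces finiteness downstairs from compactness of $\mathbb{X}$ together with the openness of each piece in $\mathrm{Fix}(\sigma^{*},\mathbb{X})$.

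What each approach buys: the paper's double-coset formulation makes the parametrization of the components by $\Gamma\backslash G/A^{*}$ completely explicit, which is convenient for the later examples. Your geometric argument is cleaner conceptually and, as you note in passing, yields immediately that each component is a \emph{closed} geodesic (via $\mathrm{Stab}_\Gamma(L)\backslash L$ being compact); the paper establishes that separately as Theorem~\ref{thm1} with a covering/box-counting argument. Your identification of local finiteness as the crux is apt; the paper's discreteness-of-representatives step plays the same role.
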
 

We require the following 

\begin{lemma} \label{lem1}For an arbitrary $\Gamma < G$, $\Gamma w\in \mathrm{ Fix}(\sigma^{*},\mathbb{X})$ if and only if there exists a $g\in G$ with $w=gi$ and $\hat{g}g^{-1}\in\Gamma$. Moreover, if $\Gamma$ is torsion-free, there are exactly two elements $g$ and $gS$ satisfying the conditions.

\begin{proof} If $g$ exists, then writing $\hat{g}=\gamma g$, we have $\sigma^{*}\Gamma w = \Gamma \sigma gi = \Gamma\hat{g}i = \Gamma\gamma gi = \Gamma w$ so that $\Gamma w\in \mathrm{ Fix}(\sigma^{*},\mathbb{X})$. 

Conversely, $\Gamma w\in \mathrm{ Fix}(\sigma^{*},\mathbb{X})$ implies $\Gamma w=\sigma^{*}\Gamma w = \Gamma \sigma w$, so that $\Gamma hi = \Gamma \hat{h}i$ for any $h\in G$ with $w=hi$. Then there exists $k\in K$ so that $\hat{h}kh^{-1}\in \Gamma$. Writing $k=\eta^{-2}$ with $\eta\in K$ and putting $g=h\eta$ gives us the conclusion for the first part. 

For the second part, suppose $\Gamma w\in \mathrm{ Fix}(\sigma^{*},\mathbb{X})$ with $w=g_{1}i=g_{2}i$. Putting $h= g_{2}^{-1}g_{1}$, we see that $h\in K$ and $\hat{h}h^{-1}\in g_{2}^{-1}\Gamma g_{2}$, so that $h^{-2}\in g_{2}^{-1}\Gamma g_{2} \cap K$, a finite group. Since $\Gamma$ is torsion-free, so is $g_{2}^{-1}\Gamma g_{2}$ and so  $h^{2} = I$. Hence $g_{1}=g_{2}$ or $g_{1}=g_{2}S$. 

\end{proof}

\end{lemma}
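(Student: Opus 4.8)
The plan is to prove Lemma \ref{lem1} directly from the definition of $\sigma^*$ and the relation $\hat g = \sigma g \sigma$, treating the two directions of the ``if and only if'' separately and then counting.

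\medskip

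\noindent\textbf{The two directions of the equivalence.}
First I would establish the easy (``if'') direction. Suppose there exists $g\in G$ with $w=gi$ and $\hat g g^{-1}\in\Gamma$, say $\hat g = \gamma g$ with $\gamma\in\Gamma$. Since $\sigma i = i$ (as $i\in\mathcal I = \mathrm{Fix}(\sigma;\mathbb H)$), I compute
\[
\sigma^*(\Gamma w) = \Gamma\sigma w = \Gamma\sigma g i = \Gamma\sigma g \sigma i = \Gamma\hat g\, i = \Gamma\gamma g\, i = \Gamma g i = \Gamma w,
\]
so $\Gamma w\in\mathrm{Fix}(\sigma^*,\mathbb X)$. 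For the converse (``only if''), assume $\Gamma w\in\mathrm{Fix}(\sigma^*,\mathbb X)$, i.e.\ $\Gamma w = \Gamma\sigma w$. Pick any $h\in G$ with $w=hi$; then $\Gamma h i = \Gamma\sigma h i = \Gamma\hat h i$, using $\sigma h i = \sigma h \sigma i = \hat h i$ as before. Equality of these points in $\mathbb X$ means $\hat h i = \gamma h i$ for some $\gamma\in\Gamma$, i.e.\ $h^{-1}\gamma^{-1}\hat h$ fixes $i$, hence lies in the stabilizer $K$. Writing this as $\hat h = \gamma h k'$ for some $k'\in K$, or more symmetrically $\hat h k h^{-1}\in\Gamma$ for some $k\in K$, I then correct $h$ by an element of $K$ to kill the $k$-factor. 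Since $K$ is connected and abelian, every $k\in K$ has a square root: write $k=\eta^{-2}$ with $\eta\in K$. The key observation is that $\hat\eta=\eta^{-1}$ for all $\eta\in K$ (stated in the preamble), so setting $g=h\eta$ gives $\hat g = \hat h\hat\eta = \hat h\eta^{-1}$, and one checks $\hat g g^{-1}=\hat h\eta^{-1}\eta^{-1}h^{-1}=\hat h\eta^{-2}h^{-1}=\hat h k h^{-1}\in\Gamma$, with $gi=h\eta i = hi=w$ since $\eta\in K$ fixes $i$.

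\medskip

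\noindent\textbf{The counting of solutions.}
For the final assertion, suppose $\Gamma$ is torsion-free and that $g_1,g_2$ both satisfy the conditions, $w=g_1 i = g_2 i$. Put $h=g_2^{-1}g_1$; since $g_1 i = g_2 i$ we have $h\in K$. From $\hat g_j g_j^{-1}\in\Gamma$ for $j=1,2$, I would deduce $\hat h h^{-1}\in g_2^{-1}\Gamma g_2$ by a direct substitution ($\hat h h^{-1} = \hat g_2^{-1}\hat g_1 g_1^{-1}g_2 = \hat g_2^{-1}(\hat g_1 g_1^{-1})g_2$, and rearranging using $\hat g_2 g_2^{-1}\in\Gamma$). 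Now the crucial simplification: for $h\in K$ one has $\hat h = h^{-1}$, so $\hat h h^{-1} = h^{-2}$. Thus $h^{-2}\in g_2^{-1}\Gamma g_2\cap K$. Because $\Gamma$ is torsion-free, its conjugate $g_2^{-1}\Gamma g_2$ is torsion-free as well, while $g_2^{-1}\Gamma g_2\cap K$ is a subgroup of the compact group $K\cong\mathrm{SO}(2)/\{\pm1\}$ hence consists of torsion elements; the only such element is the identity. Therefore $h^2=I$ in $G=\PSLR$, which forces $h=I$ or $h=S$ (these being the two square roots of the identity in $K$, since $S^2=-I$ is trivial in $\PSLR$). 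Consequently $g_1=g_2$ or $g_1=g_2 S$, giving exactly the two stated solutions. Finally I note $\hat S = S$ in $G$ and $Si=i$, so $g_2 S$ genuinely satisfies both conditions whenever $g_2$ does, confirming there are exactly two.

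\medskip

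\noindent\textbf{Main obstacle.}
The routine algebra is harmless; the one place requiring care is the first-direction passage from the set-equality $\Gamma h i = \Gamma\hat h i$ to an element of $K$, i.e.\ correctly extracting the relation $\hat h k h^{-1}\in\Gamma$ and then taking a square root \emph{inside} $K$. The argument depends essentially on the two structural facts $\hat k = k^{-1}$ for $k\in K$ and the existence of square roots in the connected abelian group $K$; I would make sure these are invoked explicitly rather than glossed over, since they are exactly what makes the $k$-factor removable and the two-solution count come out cleanly.
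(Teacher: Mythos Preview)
Your proof is correct and follows essentially the same approach as the paper's: both directions of the equivalence and the counting argument proceed by the same computations (using $\hat k = k^{-1}$ for $k\in K$, taking a square root $k=\eta^{-2}$ in $K$, and reducing to $h^2=I$ in $K$). One small imprecision: you justify that $g_2^{-1}\Gamma g_2\cap K$ consists of torsion elements by saying it is a subgroup of the compact group $K$, but that alone is not enough (e.g.\ $K$ itself has non-torsion elements); the missing word is \emph{discrete} --- since $\Gamma$ is discrete, the intersection is a discrete subgroup of the compact group $K$, hence finite, hence torsion. The paper is equally terse here (it simply asserts ``a finite group''), so this is a phrasing issue rather than a gap.
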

\noindent {\bf Proof of  Prop.\ref{p1}.}\  Let $g\in G$ satisfy the conditions in the lemma for a given $w$, so that if we identify $g$ with $gS$,  the former is determined uniquely by $w$.  For each $a\in A$, let $w_{a}= gag^{-1}w = gai$. Then since any element in $A$ commutes with $S$, one can identify $w_{a}$ with $ga$. Moreover, $\widehat{ga}(ga)^{-1}\in\Gamma$ so that $w_{a}\in  \mathrm{ Fix}(\sigma^{*},\mathbb{X})$. Thus, $\Gamma w_{a}$ with $a\in A$ determines a curve  $\Gamma w_{A}$ in $\mathbb{X}$ consisting of fixed points of $\sigma^{*}$ for any $\Gamma w\in \mathrm{ Fix}(\sigma^{*},\mathbb{X})$. 

These curves are disjoint as follows: suppose $\Gamma z\in \mathrm{ Fix}(\sigma^{*},\mathbb{X})$ with associated curve $\Gamma z_{A}$ and suppose $\Gamma w_{A}$ and $\Gamma z_{A}$ have a point in common. Then there exist $\gamma_{j}\in \Gamma$, $a_{j}\in A$ and $g_{j}\in G$ satisfying $\gamma_{1}g_{1}a_{1}i= \gamma_{2}g_{2}a_{2}i$, with $\hat{g}_{j}g_{j}^{-1}\in\Gamma$ for $j=1$ and $2$, and with $g_{1}i=w$ and $g_{2}i=z$. It follows that if $h_{j}= g_{j}a_{j}$, then there is a $\gamma \in \Gamma$  with $h_{2}^{-1}\gamma h_{1}\in K$. Thus $\gamma h_{1}i = h_{2}i$, with $\widehat{\gamma h}_{1}(\gamma h_{1})^{-1}\in \Gamma$ and $\hat{h}_{2}h_{2}^{-1}\in\Gamma$. Since $\Gamma h_{2}i = \Gamma h_{1}i \in \mathrm{ Fix}(\sigma^{*},\mathbb{X})$, it follows from Lemma \ref{lem1} that $\gamma h_{1} = h_{2}$ or $\gamma h_{1} = h_{2}S$. Since $S$ commutes with $A$, we have either $\Gamma g_{1}A = \Gamma g_{2}A$ or $\Gamma g_{1}A = \Gamma g_{2}AS$, so that $\Gamma z_{A}=\Gamma w_{A}$.

Let $A^{*} = A \cup AS$, the subgroup of $G$ mapping $\mathcal{I}$ to itself. Then from the discussion above, we have 
\[ \mathrm{ Fix}(\sigma^{*},\mathbb{X}) = \bigcup_{h\in \Gamma\backslash G\slash A^{*}} \Gamma h\mathcal{I},
\]
where in the (disjoint) union, $h$ satisfies $\hat{h}h^{-1}\in \Gamma$. The set $\mathcal{S}$ of representatives $h$ forms a discrete set, since if $ h\in \mathcal{S}$ and $h_{i}\to h$ in $\mathcal{S}$, then putting $\gamma_{i}= \hat{h}_{i}h_{i}^{-1}$ and $\gamma = \hat{h}h^{-1}$ in $\Gamma$ gives us $\gamma_{i}\to\gamma$. Discreteness of $\Gamma$ implies  for all $i$ and $j$ large enough, $\gamma_{i}=\gamma_{j}$, that is $\widehat{h_{j}^{-1}h_{i}}=h_{j}^{-1}h_{i}$, so that $h_{i}=h_{j}a$ for some $a\in A$.  Then $\Gamma h_{i}A^{*}=\Gamma h_{j}A^{*}$. Since $\Gamma\backslash \mathbb{H}$ is compact, so is $\mathcal{S}$ and finiteness follows.

The proposition above shows that $\mathrm{ Fix}(\sigma^{*},\mathbb{X}) $ is a finite union of connected components, each an image of a geodesic in $\mathbb{H}$. We now show

\begin{theorem}\label{thm1} For  $\Gamma$ is a cocompact, torsion free discrete subgroup of $G$ satisfying $\sigma \Gamma\sigma = \Gamma$, the connected components of $\mathrm{ Fix}(\sigma^{*},\mathbb{X}) $ are closed geodesics in $\mathbb{X}$.
\end{theorem}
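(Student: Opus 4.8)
The plan is to show that each connected component of $\mathrm{Fix}(\sigma^*,\mathbb{X})$, which by Proposition~\ref{p1} is of the form $\Gamma h\mathcal{I}$ for some $h\in G$ with $\hat h h^{-1}\in\Gamma$, is the image of the full geodesic $h\mathcal{I}$ under a closed-orbit map, i.e.\ that there is a hyperbolic element $\delta\in\Gamma$ preserving $h\mathcal{I}$ and acting on it by a nontrivial translation, so that $\Gamma h\mathcal{I}$ is a compact quotient of $h\mathcal{I}$ by $\langle\delta\rangle$. First I would reduce to the case $h=1$ by conjugating: replacing $\Gamma$ by $h^{-1}\Gamma h$ (still discrete, cocompact, torsion-free) and noting that $h^{-1}\sigma h$ is again a reflection, with the hypothesis $\hat h h^{-1}\in\Gamma$ translating into the statement that the new group is stable under conjugation by this reflection, and the component $\Gamma h\mathcal{I}$ becoming the component through $\mathcal{I}$ itself. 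So it suffices to treat the component $\Gamma\mathcal{I}$ under the standing assumption $\sigma\Gamma\sigma=\Gamma$.

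Next I would consider the stabilizer $\Gamma_{\mathcal{I}}=\{\gamma\in\Gamma:\gamma\mathcal{I}=\mathcal{I}\}$, i.e.\ $\Gamma\cap A^*$ where $A^*=A\cup AS$ as in the proof of Proposition~\ref{p1}. The key point is that this stabilizer is \emph{infinite}. Here is where I expect the main work to lie. One argument: the set $\Gamma\mathcal{I}$ is a connected component of the closed set $\mathrm{Fix}(\sigma^*,\mathbb{X})$ inside the compact surface $\mathbb{X}$, hence it is itself compact; on the other hand $\Gamma\mathcal{I}$ is the continuous image of $\mathcal{I}\cong\mathbb{R}$ under $\mathcal{I}\to\Gamma_{\mathcal{I}}\backslash\mathcal{I}\hookrightarrow\mathbb{X}$ (the map factors through $\Gamma_{\mathcal{I}}\backslash\mathcal{I}$ because two points $ai$, $bi$ of $\mathcal{I}$ lie in the same $\Gamma$-orbit only if they lie in the same $\Gamma_{\mathcal{I}}$-orbit — this is exactly the disjointness/identification argument already run in the proof of Proposition~\ref{p1}, applied with $w=z=i$). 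If $\Gamma_{\mathcal{I}}$ were finite, then $\Gamma_{\mathcal{I}}\backslash\mathcal{I}$ would be noncompact (a finite quotient of $\mathbb{R}$ is either $\mathbb{R}$ or a half-line or a point, none compact), contradicting compactness of its image, unless the image is a point — but the image cannot be a point since $\Gamma$ is torsion-free (a geodesic mapping to a point would force a nontrivial elliptic or parabolic element fixing a point of $\mathcal{I}$, impossible for torsion-free cocompact $\Gamma$). Hence $\Gamma_{\mathcal{I}}$ is infinite.

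Finally I would identify the structure of $\Gamma_{\mathcal{I}}$: it is an infinite discrete subgroup of $A^*$. Its intersection with $A$ is an infinite discrete subgroup of $A\cong\mathbb{R}$, hence infinite cyclic, generated by some hyperbolic $\delta=\mathrm{diag}(e^{\ell/2},e^{-\ell/2})$ with $\ell>0$; this $\delta$ translates $\mathcal{I}$ along itself by hyperbolic distance $\ell$. (The possible extra coset $\Gamma_{\mathcal{I}}\cap AS$ consists of orientation-preserving elements reversing the two endpoints $0,\infty$ of $\mathcal{I}$; these are order-two elliptic elements, excluded by torsion-freeness, so in fact $\Gamma_{\mathcal{I}}=\langle\delta\rangle\subset A$.) Therefore $\Gamma\mathcal{I}\cong\langle\delta\rangle\backslash\mathcal{I}$ is precisely the closed geodesic in $\mathbb{X}$ determined by the hyperbolic conjugacy class of $\delta$, of length $\ell$. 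Undoing the conjugation, the component $\Gamma h\mathcal{I}$ is the closed geodesic attached to the hyperbolic element $h\delta h^{-1}\in\Gamma$. Since Proposition~\ref{p1} exhibits $\mathrm{Fix}(\sigma^*,\mathbb{X})$ as a finite disjoint union of such components, the theorem follows. The one step that needs care is the infiniteness of the stabilizer; the compactness-of-component argument above is the cleanest route, but one could alternatively argue directly that a geodesic invariant under the reflection $\sigma$ and meeting its image in a surface of finite area must close up, again using that $\Gamma$ has no parabolics or torsion.
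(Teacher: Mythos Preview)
Your overall strategy---reduce to $h=1$, show the stabiliser $\Gamma_{\mathcal I}=\Gamma\cap A^*$ is infinite, then identify it as infinite cyclic generated by a hyperbolic element---is sound and is a genuinely different route from the paper's pigeonhole argument. The reduction by conjugation is fine, and your final paragraph (ruling out the $AS$-coset by torsion-freeness and reading off the closed geodesic) is correct.

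There is, however, a real gap in the middle step. You argue: the component $\Gamma\mathcal I$ is closed in compact $\mathbb X$, hence compact; the map $\Gamma_{\mathcal I}\backslash\mathcal I\hookrightarrow\mathbb X$ is injective; so if $\Gamma_{\mathcal I}$ were finite the domain would be noncompact, ``contradicting compactness of its image''. But a continuous bijection from a noncompact space onto a compact one is not a contradiction---think of $[0,2\pi)\to S^1$, $t\mapsto e^{it}$. What you need is that this map is a \emph{homeomorphism} onto its image (an embedding), not merely an injection, and you have not justified that. The injectivity you extract from the argument of Proposition~\ref{p1} is correct, but it does not give properness or openness by itself.

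The cleanest patch is to observe directly that $\mathrm{Fix}(\sigma^*,\mathbb X)$ is a smooth $1$-dimensional submanifold of $\mathbb X$: since $\Gamma$ is torsion-free, $\mathbb X$ is a smooth surface and $\sigma^*$ a smooth isometric involution; the fixed set of any such map is a totally geodesic submanifold, and because $\sigma^*$ is orientation-reversing its differential at each fixed point has eigenvalues $\pm 1$, so the fixed set has dimension $1$. Each component is then a compact connected $1$-manifold without boundary, hence a circle, and being totally geodesic it is a closed geodesic. With this in hand your stabiliser argument goes through (the injective local isometry $\Gamma_{\mathcal I}\backslash\mathcal I\to\Gamma\mathcal I$ between complete connected $1$-manifolds is a covering, hence a homeomorphism), though at that point the theorem is already proved.

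By contrast, the paper argues by a length-versus-area pigeonhole: it covers $\mathcal C=\Gamma h\mathcal I$ by finitely many small discs (finiteness from compactness of $\mathbb X$), notes that the projected geodesic $h\mathcal I$ has infinite length, and concludes that two distinct subarcs of $h\mathcal I$ land in the same disc; lifting back, this forces a nontrivial $\gamma\in\Gamma$ with $\gamma h\mathcal I=h\mathcal I$. The paper's final identification step (``the images in $A_0$ must be identical'') is itself using the local $1$-manifold structure of the fixed set, so both approaches ultimately lean on the same geometric fact---yours just makes it do all the work up front.
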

\begin{rem} If \ $\Gamma$ is not torsion free, then each connected component is the image of a geodesic arc (explicit examples are given in Section 6). If there is any advantage to having closed geodesics as components of the fixed point set,  as considered in  Sections 4 and 5, one may do so as follows. By the Malcev-Selberg lemma, there is a subgroup $\Gamma ' < \Gamma$ of finite index that is torsion free. However we need $\Gamma '$ to satisfy the normalizing condition $\sigma\Gamma '\sigma = \Gamma '$.  Since $\sigma\Gamma '\sigma$ is also a finite index subgroup of $\Gamma$, the set $\Gamma^{*} = \Gamma ' \cap \sigma\Gamma '\sigma$ is a finite indexed torsion free subgroup of $\Gamma$ satisfying $\sigma \Gamma^{*}\sigma = \Gamma^{*}$.  Thus, one may descend to such a subgroup if necessary.
\end{rem}
\begin{proof} Let $\eps_{\mathbb{X}} >0$ be the injectivity radius of $\mathbb{X}$.  Choose a connected component $\mathcal{C}$ in $\mathrm{ Fix}(\sigma^{*},\mathbb{X}) $, with say the point $\Gamma z$ on it. As in Prop. \ref{p1},  let $h\in \Gamma\backslash G\slash A^{*}$ with $hi=z$, so that $\mathcal{C}=\Gamma h \mathcal{I}$.   Let $0<\eps<\frac{1}{2}\eps_{\mathbb{X}}$ and subdivide $\mathcal{I}$ into subintervals $\mathcal{I}_{j}$ of length $\eps$ with $i$ as an endpoint. The subintervals determine discs $\mathcal{D}_{j}$  in $\mathbb{X}$ covering $\mathcal{C}$ with $1\leq j\leq R$,  with each disc of radius $\frac{1}{2}\eps$ mapped locally homeomorphically to a disc in $\mathbb{H}$ , each containing a segment of $\mathcal{I}$.   By choosing $\eps$ small enough, we can ensure that each $\mathcal{D}_{j}$ contains no other  arcs of components of $\mathrm{ Fix}(\sigma^{*},\mathbb{X}) $ except $\mathcal{C}$. 

Let $l>1$ be a parameter chosen sufficiently large later on and let $\mathcal{I}_{l}$ consist of the subinterval in $\mathcal{I}$ with endpoints at $i$ and $il$.  For each $j$, let $n_{j}$ denote the number of images of the subintervals of $\mathcal{I}_{l}$ in $\mathcal{A}_{j}$, and of each such image, let $l_{jk}$ denote the length with $1\leq k\leq n_{j}$. Then
\[
\log{l} = \sum_{j=1}^{R}\sum_{k=1}^{n_j}l_{jk} \leq C\sum_{j=1}^{R}n_{j},
\]
for some constant $C>0$ independent of $l$. Moreover, 
\[
\sum_{j=1}^{R}\mathrm{ Area}(\mathcal{A}_{j}) \ll \mathrm{ Area}(\mathbb{X}),
\]
so that there is a constant $D_{\eps}>0$ with  $R\leq D_{\eps} \mathrm{ Area}(\mathbb{X})$. Choosing $l$ large enough ensures that $\sum_{j=1}^{R}n_{j} \geq R+1$, so that there is a disc $A_{0}$ containing at least two images of subintervals of $\mathcal{I}$.  Since locally each neighbourhood of $\mathcal{I}$ contains only one segment of $\mathcal{I}$, by the homeomorphism $h$ the images in $A_{0}$ must be identical, so that $\mathcal{C}$ is closed.
\end{proof}

\subsection{The nodal count.}\label{nodal-count-sect}\quad 

Let $\mathbb{Y}$ be any compact surface of genus $g\geq 0$, possibly with boundary. Let $\mathfrak{C}$ be a finite set of ovals (i.e. closed simple curves) on $\mathbb{Y}$ satisfying (i) two distinct ovals have no common points and (ii) ovals never have boundary points. Let $N(\mathbb{Y},\mathfrak{C})$ denote the number of connected components of $\mathbb{Y}\backslash\mathfrak{C}$. Then we have

\begin{proposition}\label{p2} $N(\mathbb{Y},\mathfrak{C})\geq |\mathfrak{C}| -g +1$.

\end{proposition}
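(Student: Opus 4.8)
The plan is to induct on the number of ovals $|\mathfrak{C}|$. When $|\mathfrak{C}| = 0$ the surface $\mathbb{Y}$ is connected (or we may reduce to the connected case), so $N(\mathbb{Y}, \emptyset) = 1 \geq 1 - g$ holds trivially since $g \geq 0$; more carefully, one should treat each connected component of $\mathbb{Y}$ separately, so I would first reduce to the case that $\mathbb{Y}$ is connected and then run the induction, being careful to track that the genus in the bound is the genus of the ambient surface.

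For the inductive step, remove one oval $C \in \mathfrak{C}$ and compare $N(\mathbb{Y}, \mathfrak{C})$ with $N(\mathbb{Y}, \mathfrak{C} \setminus \{C\})$. The oval $C$ lies inside some connected component $Y_0$ of $\mathbb{Y} \setminus (\mathfrak{C}\setminus\{C\})$. There are two cases. If $C$ separates $Y_0$, then cutting $Y_0$ along $C$ yields two pieces, so $N(\mathbb{Y},\mathfrak{C}) = N(\mathbb{Y}, \mathfrak{C}\setminus\{C\}) + 1$, and adding $C$ contributes $+1$ to both the component count and to $|\mathfrak{C}|$, so the inequality is preserved with the same $g$. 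If $C$ does not separate $Y_0$, then $N(\mathbb{Y},\mathfrak{C}) = N(\mathbb{Y},\mathfrak{C}\setminus\{C\})$ — the component count does not go up — but cutting along a non-separating simple closed curve drops the genus of that component by $1$ (or, if $C$ is one-sided, one can pass to an orientation cover or argue directly that the relevant topological invariant decreases). In this case $|\mathfrak{C}|$ increases by $1$ while $N$ stays fixed, so I need to absorb the discrepancy: this is exactly where the $-g$ slack is consumed. The clean bookkeeping is to prove the stronger statement $N(\mathbb{Y},\mathfrak{C}) \geq |\mathfrak{C}| - g(\mathbb{Y}) + c(\mathbb{Y})$ where $c(\mathbb{Y})$ is the number of connected components and $g(\mathbb{Y})$ is the total genus (sum over components), and observe that removing a non-separating oval from a component decreases that component's genus by at least $1$, so $|\mathfrak{C}| - g$ is non-increasing under the removal — hence the bound for the smaller configuration implies the bound for the larger one.

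The main obstacle, and the point that needs genuine care rather than routine verification, is the non-separating case: one must (i) correctly account for one-sided (orientation-reversing) ovals, which can occur on non-orientable surfaces and whose neighborhoods are Möbius bands rather than annuli, and (ii) make precise the claim that cutting along such a curve strictly decreases the genus invariant, so that the quantity $g(\mathbb{Y}) - g(\mathbb{Y} \text{ cut along } C)$ is at least $1$ and the induction closes. An alternative to the cut-and-induct argument, which sidesteps the one-sided subtlety, is a direct Euler-characteristic computation: build a CW structure on $\mathbb{Y}$ in which $\mathfrak{C}$ is a subcomplex, use $\chi(\mathbb{Y}) = \chi(\mathfrak{C}) + \sum (\chi(\text{faces}))$ together with $\chi(\mathfrak{C}) = 0$ (each oval is a circle), and bound $\chi$ of each open face by $1$; combined with $\chi(\mathbb{Y}) \leq 2 - 2g$ (for a closed orientable genus $g$ surface, with the appropriate inequality when there is boundary or non-orientability) this yields $N(\mathbb{Y},\mathfrak{C}) \geq \chi(\mathbb{Y}) - \chi(\mathbb{Y}\setminus\mathfrak{C}) + \cdots$, rearranging to $|\mathfrak{C}| - g + 1$. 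I would most likely present the Euler-characteristic version as the main line of argument since it is uniform across the separating/non-separating and orientable/non-orientable distinctions, relegating the inductive picture to motivation.
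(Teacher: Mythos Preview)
Your inductive argument is essentially the same cutting argument as the paper's, only reorganized: the paper inducts on $g$ while you induct on $|\mathfrak{C}|$, but in both cases one picks an oval $C$, distinguishes the separating and non-separating cases, and in the non-separating case cuts the surface along $C$ to drop the genus by one. Your exposition, however, conflates two different operations. You repeatedly write $N(\mathbb{Y},\mathfrak{C}\setminus\{C\})$ --- same ambient surface, fewer ovals --- but then invoke ``cutting along $C$ drops the genus,'' which only makes sense if you actually replace $\mathbb{Y}$ by the cut surface $\mathbb{Y}'$. With the ambient surface fixed, the inductive hypothesis in the non-separating case gives only $N \geq |\mathfrak{C}| - g$, one short; the slack in $g$ is not ``consumed'' unless you genuinely pass to $\mathbb{Y}'$ and apply the statement to the pair $(\mathbb{Y}',\mathfrak{C}\setminus\{C\})$. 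Once you do that (and state the induction over all pairs $(\mathbb{Y},\mathfrak{C})$, tracking $c(\mathbb{Y})$ as you indicate), the argument matches the paper's. Your worry about one-sided curves is unnecessary here: $\mathbb{Y}$ is orientable, so every simple closed curve is two-sided.

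Your Euler-characteristic alternative is a genuinely different route, and in fact the cleanest one. Writing each component $Y_i$ of $\mathbb{Y}\setminus\mathfrak{C}$ as a compact orientable surface with $b_i \geq 1$ boundary circles, one has $\sum_i \chi(Y_i) = \chi(\mathbb{Y})$ (gluing along circles contributes nothing), while $\sum_i b_i = b + 2|\mathfrak{C}|$. Expanding $\chi(Y_i) = 2 - 2g_i - b_i$ and $\chi(\mathbb{Y}) = 2 - 2g - b$ gives $N = |\mathfrak{C}| - g + 1 + \sum_i g_i \geq |\mathfrak{C}| - g + 1$ in one line, with no case analysis. This is sharper than your sketch (which only bounded $\chi(Y_i)\leq 1$) and avoids both the separating/non-separating split and any orientation subtleties.
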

\begin{proof}
We proceed by induction on $g$. If $g=0$, then $N=|\mathfrak{C}|+1$ as required. Assume the result for any such surface with genus less than $g\geq 1$. Let $\gamma \in \mathfrak{C}$. There are two cases depending on whether $\gamma$ is separating or not. 

If $\gamma$ does not separate, so that $\mathbb{Y}\backslash \gamma$ is connected, we cut along $\gamma$ to obtain a new surface $\mathbb{Y}_{1}$ with genus $g-1$ with oval set $\mathfrak{C}_{1}=\mathfrak{C}\backslash\gamma$. There are now $|\mathfrak{C}|-1$ ovals on $\mathbb{Y}_{1}$, and they do not intersect the boundary due to the assumptions made for $\mathfrak{C}$. Then, by induction, 
\[
N(\mathbb{Y},\mathfrak{C})= N(\mathbb{Y}_{1},\mathfrak{C}_{1})\geq |\mathfrak{C}_{1}|-(g-1)+1 = |\mathfrak{C}|-1-(g-1)+1=|\mathfrak{C}|-g+1.
\]
Next, suppose $\gamma$ separates $\mathbb{Y}$ so that $\mathbb{Y}=\mathbb{Y}_{1} \oplus \mathbb{Y}_{2}$ with corresponding genus $g_{i}$ satisfying $g=g_{1}+g_{2}$. The ovals remaining in $\mathfrak{C}\backslash \gamma$ separate into two subsets so that $\mathfrak{C}\backslash \gamma = \mathfrak{C}_{1}\cup \mathfrak{C}_{2}$ with $|\mathfrak{C}_{1}|+ |\mathfrak{C}_{2}|=|\mathfrak{C}|-1$. If both $g_{1}$ and $g_{2}$ are not zero, we can proceed inductively to get
\[
N(\mathbb{Y},\mathfrak{C})= N(\mathbb{Y}_{1},\mathfrak{C}_{1})+N(\mathbb{Y}_{2},\mathfrak{C}_{2})\geq (|\mathfrak{C}_{1}|-g_{1}+1)+(|\mathfrak{C}_{1}|-g_{2}+1)=|\mathfrak{C}|-g+1.
\]
If either $g_{1}=0$ or $g_{2}=0$ (and not both), then $\mathbb{Y}_{1}$ or $\mathbb{Y}_{2}$ determines a connected component of $\mathbb{Y}\backslash\mathfrak{C}$. Thus we conclude that either $\gamma$ determines a connected component of $\mathbb{Y}\backslash\mathfrak{C}$ or if not then necessarily $N(\mathbb{Y},\mathfrak{C})\geq |\mathfrak{C}| -g +1$. We now repeat this argument with another choice for $\gamma$. Repeating at least $|\mathfrak{C}| -g +1$ times gives us the result.
\end{proof}

Suppose $\mathbb{Y}$ has an involution $\omega$.

\begin{lemma}\label{lem2} A simple $\omega$-invariant, closed curve on $\mathbb{Y}$ can intersect $\mathrm{ Fix}(\omega,\mathbb{Y}) $ in at most two points.
\end{lemma}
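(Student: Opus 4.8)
The plan is to analyze how a simple $\omega$-invariant closed curve $\mathcal{C}$ meets the fixed-point set $\mathrm{Fix}(\omega,\mathbb{Y})$ locally and then use the topology of the circle. First I would observe that since $\omega$ is an isometric involution, near any fixed point $p\in\mathrm{Fix}(\omega,\mathbb{Y})$ the map $\omega$ is conjugate to a linear reflection of a disc across a diameter; in particular, in a small disc $D_p$ the set $\mathrm{Fix}(\omega,\mathbb{Y})\cap D_p$ is a single geodesic arc through $p$, dividing $D_p$ into two half-discs interchanged by $\omega$. So the geometric picture is that $\mathrm{Fix}(\omega,\mathbb{Y})$ is locally a curve, and $\omega$ reflects across it.

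Next I would use the $\omega$-invariance of $\mathcal{C}$: if $q\in\mathcal{C}\cap\mathrm{Fix}(\omega,\mathbb{Y})$, then in the local model above, $\mathcal{C}\cap D_q$ is a simple arc through $q$ preserved by the reflection $\omega$. A simple arc through the center of a disc that is invariant under the reflection across a diameter $\ell$ must either coincide with $\ell$ or cross $\ell$ transversally at $q$ (it cannot be tangent to $\ell$ on one side and leave, since then its image under $\omega$ would be a second arc, contradicting simplicity). The first alternative — $\mathcal{C}$ containing a whole arc of $\mathrm{Fix}(\omega,\mathbb{Y})$ — would, by a connectedness/open-closed argument along $\mathcal{C}$, force $\mathcal{C}$ to be entirely contained in a single component of $\mathrm{Fix}(\omega,\mathbb{Y})$; since that component is itself a simple closed geodesic (or geodesic arc) and $\mathcal{C}$ is a simple closed curve, they would be equal, and then $\mathcal{C}$ ``intersects'' $\mathrm{Fix}(\omega,\mathbb{Y})$ in infinitely many points — so this degenerate case should be excluded by hypothesis or handled separately as not a genuine transversal intersection. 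In the generic case, then, every intersection point is transversal and isolated, so $\mathcal{C}\cap\mathrm{Fix}(\omega,\mathbb{Y})$ is a finite set $\{q_1,\dots,q_m\}$.

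Finally I would count using an orientation/parity argument on the circle $\mathcal{C}\cong S^1$. Parametrize $\mathcal{C}$ and consider the quotient $\mathcal{C}/\omega$: since $\omega$ restricted to $\mathcal{C}$ is an involution of the circle with exactly $m$ fixed points, and an involution of $S^1$ has either $0$ or $2$ fixed points (it is either a rotation by $\pi$, which is free, or a reflection, which has exactly two fixed points — there is no involution of $S^1$ with one fixed point or with more than two), we get $m\in\{0,2\}$, hence $m\le 2$. Alternatively, one can argue that consecutive intersection points along $\mathcal{C}$ are swapped in pairs by $\omega$ together with the arcs between them, forcing the count to be even and in fact exactly $0$ or $2$ by simplicity.

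The main obstacle I expect is the degenerate case where $\mathcal{C}$ runs along $\mathrm{Fix}(\omega,\mathbb{Y})$ for a positive length: one must either rule this out by convention (interpreting Lemma~\ref{lem2} as a statement about transversal intersections, consistent with its intended use where $\mathcal{C}$ is a nodal line of $\phi$ and $\mathrm{Fix}(\omega,\mathbb{Y})=\Sigma$ and $\phi\not\equiv 0$ on $\Sigma$) or dispose of it by noting that a simple closed curve equal to a component of $\mathrm{Fix}(\omega)$ is the only such possibility. The rest is the standard fact about involutions of the circle, which is elementary once transversality is in hand.
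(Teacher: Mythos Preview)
Your final paragraph is exactly the paper's proof: parametrize $\mathcal{C}$ by a homeomorphism $\eta:S^1\to\mathcal{C}$, note that $\eta^{-1}\omega\eta$ is an involution of $S^1$, and invoke the fact that such an involution has at most two fixed points. The paper's argument is that single step and nothing more---your first two paragraphs on local reflection models and transversality are entirely superfluous, since the fixed points of $\omega$ on $\mathcal{C}$ are by definition the fixed points of the conjugated circle involution, regardless of how $\mathcal{C}$ meets $\mathrm{Fix}(\omega,\mathbb{Y})$ geometrically. (Your concern about the degenerate case $\mathcal{C}\subset\mathrm{Fix}(\omega,\mathbb{Y})$, i.e.\ $\omega|_{\mathcal{C}}=\mathrm{id}$, is legitimate; the paper silently excludes it, which is harmless in the intended application where $\mathcal{C}$ is a nodal oval meeting $\Sigma$ transversally at a sign change.)
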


\begin{proof}Let $\mathcal{C}$ be a simple closed curve in $\mathbb{Y}$ that is $\omega$-invariant. Let $\eta : S \to \mathcal{C}$ be an injective homeomorphism from the unit circle to $\mathcal{C}$. Then $\eta^{-1}\omega\eta : S\to S$ is an involution on the circle. Suppose $\mathcal{C}$ intersects $\mathrm{ Fix}(\omega,\mathbb{Y}) $ at $z$, say and put $z=\eta(t)$. Then $\omega(z)=z$ implies $\eta^{-1}\omega\eta(t)=t$ so that $t$ is a fixed point of an involution of $S$. Hence, the number of $t$'s does not exceed two.
\end{proof}

\begin{rem} The Prop.\ref{p2} and Lemma \ref{lem2} above are valid on non-compact surfaces $\mathbb{Y}$, where we allow a finite number of cusps. An additional requirement is that the set of ovals $\mathfrak{C}$ satisfy the  condition that for any cusp, there is a neighourhood devoid of points from $\mathfrak{C}$. Then, after compactification the arguments above hold for such surfaces.
\end{rem}

Let $\mathbb{Y}$ be a genus $g\geq 0$, compact orientable Riemannian manifold of dimension two, and suppose it has an orientation reversing isometry $\sigma$. Let $\phi$ be an even eigenfunction of the Laplace operator on $\mathbb{Y}$, so that $\phi(\sigma z)=\phi(z)$. The nodal set $\mathcal{Z}_{\phi}$ of $\phi$ consists of smooth curves with possible self-intersections. The complement $\mathbb{Y}\backslash\mathcal{Z}_{\phi}$ is a finite union of connected components and its number is denoted by $N(\phi)$. To obtain a lower bound for $N(\phi)$, the argument in \cite{GRS1} shows that we may deform the nodal set to remove the singularities without increasing the number of nodal domains. Thus, in what follows, we may assume that the deformed curves are smooth and simple. Since $\phi$ is even, the nodal set is invariant under $\sigma$. 

Let $\Sigma$ denote  $\mathrm{ Fix}(\sigma,\mathbb{Y}) $. In general, it is known that $\Sigma$ is the disjoint union of a finite number (not exceeding $g+1$) of curves and that the nodal set intersects $\Sigma$ orthogonally. If $z$ is a sign-change of $\phi$ on $\Sigma$, then a nodal curve $\mathcal{C}_{z}$ emanates from it and must terminate at $\Sigma$. The reflected curve $\sigma\mathcal{C}_{z}$ has the same endpoints so that we obtain a simple smooth closed curve, an oval, which determines an $inert$ nodal domain. These ovals satisfy the conditions of Lemma \ref{lem2} so that they have at most 2 intersections with $\Sigma$. Let $n_{\phi}$ denote the total number of sign-changes of $\phi$ on $\Sigma$. Let $A$ be the number of ovals with intersection number one, and $B$ with number two. Then $A +2B= n_{\phi}$ while the number of ovals is $A+B$, so that $|\mathfrak{C}| \geq \frac{1}{2}n_{\phi}$ in Prop.\ref{p2}. Then, Prop.\ref{p2} gives us Theorem 1.1 .

\begin{rem} As mentioned before, this is valid for noncompact manifolds with a finite number of cusps provided that there are no nodal curves in a neighbourhood of each cusp.
\end{rem}

\subsection{Trees and connectivity}\quad 

Consider the nodal set of a function $\phi$ on a surface $\mathbb{X}$, and assume it is non-singular. We construct a tree associated with the nodal domains as follows: to each nodal domain we associate a vertex $v$, with edges connecting vertices that correspond to neighbouring nodal domains (with a common boundary), so that the degree $\mathrm{ deg}(v)$ counts the number of neighbours of a given nodal domain. If we remove a vertex $v$, the tree decomposes into $\mathrm{ deg}(v)$ connected components, and so the average connectivity is given by $\frac{2|E|}{|V|}$. where $|E|$ and $|V|$ are the number of edges and vertices respectively. Since the number of faces and the genus is bounded, Euler's formula shows that the average connectivity is $2 + O(\frac{1}{|V|})$. This tends to $2$ as the number of nodal domains grow. We also note that the tree contains a connected subtree consisting of the vertices corresponding to the inert nodal domains (since the involution fixes the corresponding vertices and edges), and the average connectivity of this subtree is also asymptotically $2$.

In Figure 1, we have a part of the nodal set for an eigenfunction corresponding to the eigenvalue $\lambda = \frac{1}{4} + (125.52)^{2}$ on the modular surface from \cite{HR92}, with the associated tree in Figure 2. The dark dots denote the inert domains while the light dots one-half of the split domains (the split domains occuring in pairs). The numbers at the nodes count the total number of  neighbours and also the number of inert domains, if there is a mixture.

\begin{figure}[h!]
\centering
\begin{minipage}{.6\linewidth}
  \includegraphics[width=\linewidth]{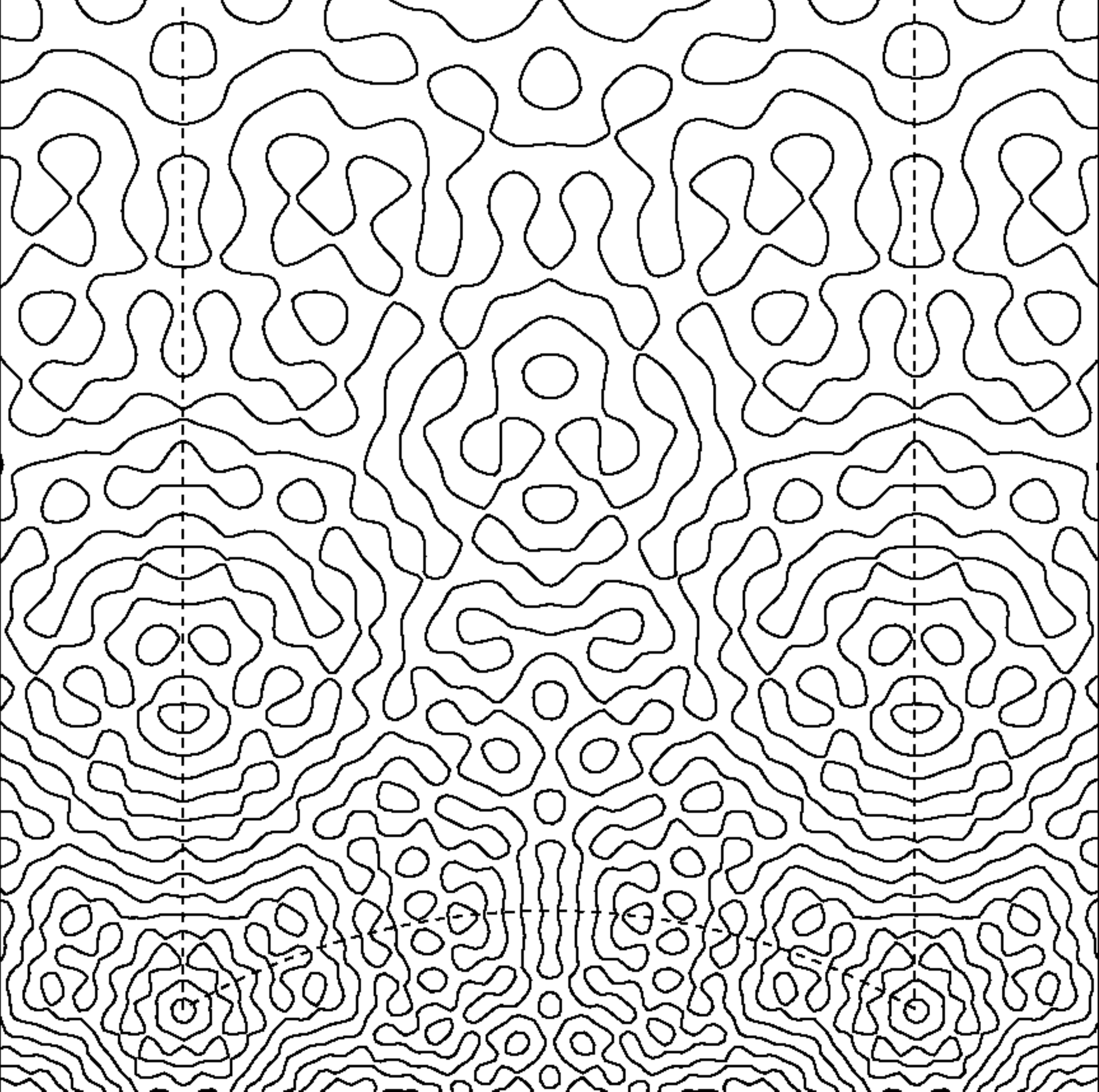}
  \end{minipage}
  \caption{$\lambda = \frac{1}{4}+ (125.52)^{2}$ from \cite{HR92}}
  \label{img1}

\end{figure}
\begin{figure}[h!]
\centering
\begin{minipage}{\linewidth}
  \includegraphics[width=0.9\linewidth]{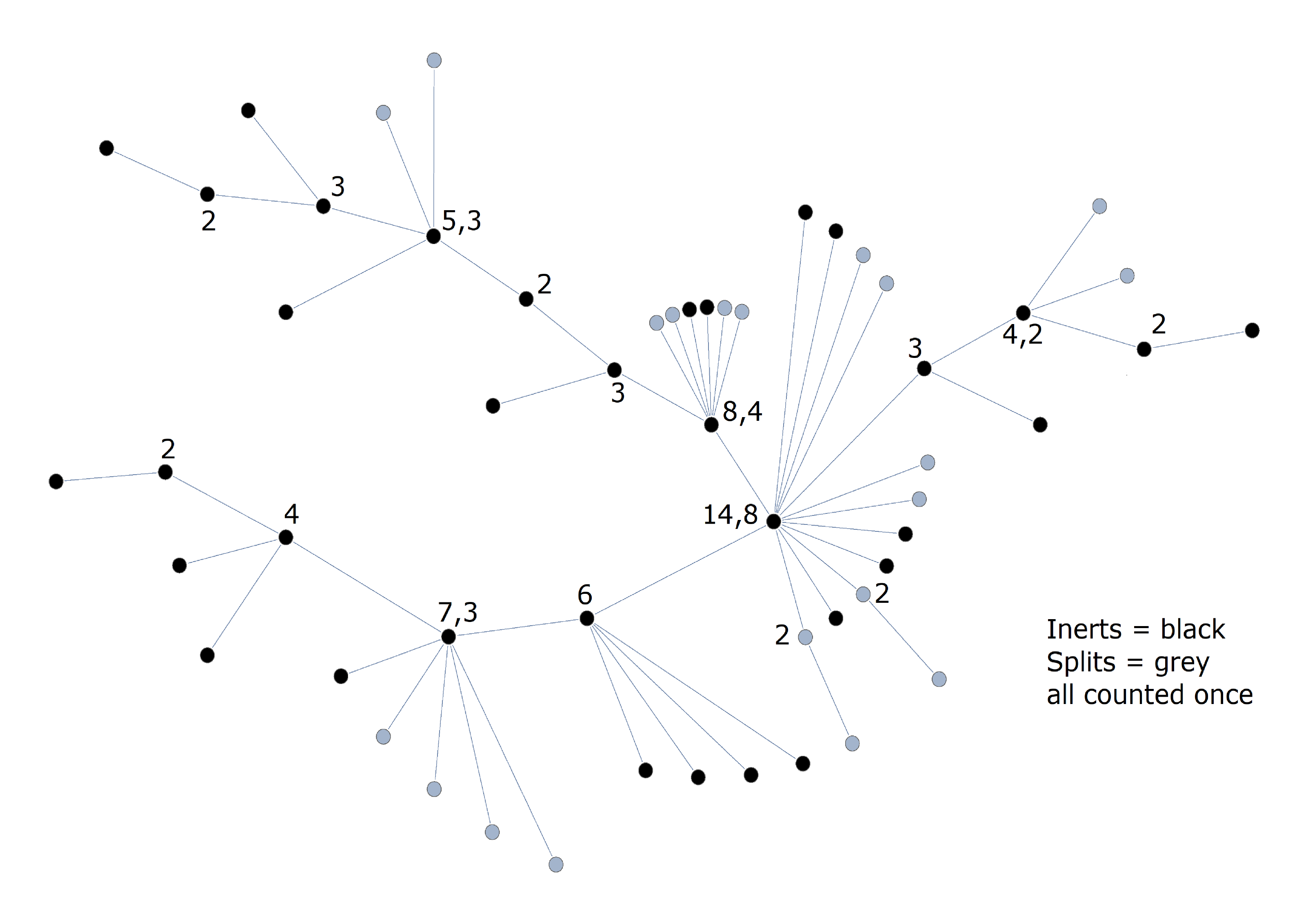}
  \end{minipage}
  \label{img2}

\caption{Associated tree of Fig. 1}
\end{figure}


\section{Soft restriction lower bounds}

In this section, we give some explicit lower bounds for certain combinations of the $L^{2}$-norms of an eigenfunction and its normal derivative, restricted to some chosen closed curves on surfaces. More precisely, we consider certain families of closed curves $\mathcal{C}_{\zeta}$ depending on a parameter $\zeta \geq 0$ and, for a real valued eigenfunction $\psi$ with eigenvalue $\lambda_{\psi} >0$ we construct a  functional $\mathcal{G}(\psi,\zeta)$ given by
\[
\mathcal{G}(\psi,\zeta) = a(\zeta)\int\limits_{\mathcal{C}_{\zeta}} \psi^{2} \mathrm ds + \frac{b(\zeta)}{\lambda_{\psi}}\int\limits_{\mathcal{C}_{\zeta}}(\partial_{n}\psi)^{2} \mathrm ds - \frac{c(\zeta)}{\lambda_{\psi}}\int\limits_{\mathcal{C}_{\zeta}}(\partial_{s}\psi)^{2}\ \mathrm ds ,
\]
where $a,b$ and $c$ are explicit non-negative functions of $\zeta$, independent of $\psi$, with $\partial_{n}\psi$ a normal derivative and $\partial_{s}\psi$ a tangential  derivative. 

For the cases under consideration,  $\mathcal{G}(\psi,\zeta)$ is an increasing function of $\zeta$; we find that $\mathcal{G}(\psi,\zeta) - \mathcal{G}(\psi,0) $ is the $L^{2}$-integral of a positively weighted multiple of $\psi^{2}$ (by Green's theorem). In the first two examples, the curve $\mathcal{C}_{0}$ is a point and so the functional  vanishes there, giving us a suitable lower bound for the restrictions. For the last case,  $\mathcal{C}_{0}$ is a (closed) geodesic and  we cannot show that $\mathcal{G}(\psi,0)\geq 0$.   

\subsection{Hyperbolic circles}\quad

Let $\mathcal{C}_{R}$ be a hyperbolic circle on $\mathbb{H}$ centered at $z=i$ with radius $R$ . In hyperbolic polar coordinates $r$ and $\theta$, this means $r=R$ and $0\leq \theta\leq 2\pi$ where $z=x+iy$ satisfies 
\[
x=\frac{\sinh r \sin{\theta}}{\cosh r + \sinh r \cos{\theta}} \quad \mathrm{ and}\quad y=\frac{1}{\cosh r + \sinh r \cos{\theta}}.
\] 
The hyperbolic Laplacian then becomes $\Delta = -(\frac{\partial^{2}}{\partial r^{2}} + \coth (r) \frac{\partial}{\partial r} + \frac{1}{\sinh ^{2}(r)}\frac{\partial^{2}}{\partial \theta^{2}})$. Let $\psi=\psi(r,\theta)$ be a real valued bounded eigenfunction with eigenvalue $\lambda_{\psi}>0$.

Consider any real smooth functions $U(r,\theta)$ and $V(r,\theta)$, periodic in $\theta$ with period $2\pi$ and satisfying additionally that $U(0,\theta)=0$ for any $\theta$. Then, we have the trivial
\begin{equation}\label{HC1}
\int\limits_{\mathcal{C}_{R}} U \mathrm d\theta = \iint\limits_{B_{R}} \left(\frac{\partial  U}{\partial r} - \frac{\partial V}{\partial \theta}\right) \mathrm dr\mathrm d\theta \ ,
\end{equation}
where $B_{R} = \{(r,\theta) : 0\leq r\leq R \ \mathrm{ and} \ 0\leq\theta\leq 2\pi\}$ is the hyperbolic disc determined by $\mathcal{C}_{R}$. We apply this formula with the choice $U(r,\theta)= \lambda_{\psi}\sinh^{2}(r) \psi^{2} + \sinh^{2}(r) (\partial_{r}\psi)^{2} - (\partial_{\theta}\psi)^{2}$ and $V(r,\theta) = -2 (\partial_{r}\psi)(\partial_{\theta}\psi)$. One checks that $U(0,\theta)=0$ as follows: $\psi$ has a Fourier expansion of the form
\[
\psi(r,\theta) = \sum_{n\in\mathbb{Z}} a_{\psi}(n)W_{|n|,t_{\psi}}(\cosh r)e^{in\theta},
\]
where the functions $W$ are the real valued  Mehler functions   $W_{|n|,t_{\psi}}(s) = P^{-|n|}_{-\frac{1}{2}+it_{\psi}}(s)$. At $s=1$, they vanish for all $n\neq 0$, and is one when $n=0$, so that $\partial_{\theta}\psi(0,\theta)=0$. Substituting $U$ and $V$ in \eqref{HC1} and using the Laplacian gives us
\begin{equation}
\begin{split}
 \sinh^{2}(R)\left(\lambda_{\psi}\int\limits_{\mathcal{C}_{R}} \psi^{2}\mathrm d \theta   + \int\limits_{\mathcal{C}_{R}} (\partial_{r}\psi) ^{2}\mathrm d \theta\right)   - \int\limits_{\mathcal{C}_{R}}(\partial_{\theta}\psi)^{2}\mathrm d \theta \quad \quad\\ = 2\lambda_{\psi}\iint\limits_{B_{R}} \cosh(r) \psi ^{2} \mathrm d \mu,
 \end{split}
\end{equation}
where we have used $\mathrm d \mu = \sinh(r)\mathrm dr\mathrm d\theta$ as the invariant area measure. Since the invariant arclength measure on the circle is $\mathrm ds = \sinh(R)\mathrm d\theta$, this gives us 

\begin{proposition} Let $\mathcal{C}_{R}$ denote a hyperbolic circle with fixed positive radius $R$.  Then,
\[
\int\limits_{\mathcal{C}_{R}} \psi^{2}\mathrm d s   + \frac{1}{\lambda_{\psi}}\int\limits_{\mathcal{C}_{R}} (\partial_{n}\psi) ^{2}\mathrm d s \geq \frac{2}{\sinh{R}}\iint\limits_{B_{R}} \cosh(r) \psi^{2}\ \mathrm d \mu,
\]
where $\partial_{n}\psi$ is the normal derivative.
\end{proposition}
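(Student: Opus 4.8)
The plan is to exhibit the asserted inequality as the boundary--interior form of a single divergence identity on the disc $B_R$, and then to discard a manifestly non-positive term. First I would record \eqref{HC1} itself: over the rectangle $\{0\le r\le R,\ 0\le\theta\le2\pi\}$ it is just Green's theorem, in which the sides $\theta=0$ and $\theta=2\pi$ cancel by $2\pi$-periodicity in $\theta$, the side $r=R$ contributes $\int_{\mathcal{C}_R}U\,d\theta$, and the side $r=0$ contributes nothing precisely because the chosen $U$ satisfies $U(0,\theta)=0$. To verify this last point for $U=\lambda_\psi\sinh^2(r)\psi^2+\sinh^2(r)(\partial_r\psi)^2-(\partial_\theta\psi)^2$ I would use the Fourier--Mehler expansion $\psi(r,\theta)=\sum_n a_\psi(n)W_{|n|,t_\psi}(\cosh r)e^{in\theta}$: since $W_{|n|,t_\psi}(1)=P^{-|n|}_{-1/2+it_\psi}(1)$ equals $0$ for $n\ne0$ and $1$ for $n=0$, differentiating termwise gives $\partial_\theta\psi(0,\theta)=0$, while the prefactor $\sinh^2(r)$ kills the first two terms as $r\to0$ (here one uses only that $\psi$ is smooth, so $\psi$ and $\partial_r\psi$ stay bounded near the center of the disc). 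Hence $U(0,\theta)=0$.

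Next I would carry out the differentiation. With $V=-2(\partial_r\psi)(\partial_\theta\psi)$, the mixed terms $2\,\partial_r\partial_\theta\psi\,\partial_\theta\psi$ cancel in $\partial_rU-\partial_\theta V$, leaving
\[
\partial_rU-\partial_\theta V=2\lambda_\psi\cosh(r)\sinh(r)\,\psi^2+2(\partial_r\psi)\big(\lambda_\psi\sinh^2(r)\psi+\sinh^2(r)\partial_r^2\psi+\sinh(r)\cosh(r)\partial_r\psi+\partial_\theta^2\psi\big).
\]
Multiplying the eigenvalue equation $\partial_r^2\psi+\coth(r)\partial_r\psi+\sinh^{-2}(r)\partial_\theta^2\psi=-\lambda_\psi\psi$ through by $-\sinh^2(r)$ shows the parenthesis is identically $0$, so $\partial_rU-\partial_\theta V=2\lambda_\psi\cosh(r)\sinh(r)\psi^2$. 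Since $d\mu=\sinh(r)\,dr\,d\theta$, inserting this into \eqref{HC1} yields
\[
\sinh^2(R)\Big(\lambda_\psi\int_{\mathcal{C}_R}\psi^2\,d\theta+\int_{\mathcal{C}_R}(\partial_r\psi)^2\,d\theta\Big)-\int_{\mathcal{C}_R}(\partial_\theta\psi)^2\,d\theta=2\lambda_\psi\iint_{B_R}\cosh(r)\,\psi^2\,d\mu.
\]

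Finally I would convert to arclength: on $\mathcal{C}_R$ one has $\partial_n\psi=\partial_r\psi$ and $ds=\sinh(R)\,d\theta$, so dividing the last identity by $\lambda_\psi\sinh(R)$ gives
\[
\int_{\mathcal{C}_R}\psi^2\,ds+\frac1{\lambda_\psi}\int_{\mathcal{C}_R}(\partial_n\psi)^2\,ds=\frac2{\sinh(R)}\iint_{B_R}\cosh(r)\,\psi^2\,d\mu+\frac1{\lambda_\psi\sinh(R)}\int_{\mathcal{C}_R}(\partial_\theta\psi)^2\,d\theta,
\]
and dropping the last term, which is $\ge0$, yields the stated bound. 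I expect the only genuinely delicate step to be the vanishing $U(0,\theta)=0$ at the center of the disc, where one must be slightly careful with the regularity of $\psi$ in polar coordinates; the remainder is a direct computation organized around the eigenvalue equation.
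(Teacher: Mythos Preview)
Your proof is correct and follows exactly the paper's approach: the same choice of $U$ and $V$ in the Green identity \eqref{HC1}, the same verification that $U(0,\theta)=0$ via the Mehler expansion, and the same conversion to arclength followed by discarding the non-negative tangential term. You have simply written out the intermediate computation of $\partial_rU-\partial_\theta V$ and the cancellation from the eigenvalue equation in more detail than the paper does.
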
 

\subsection{Closed horocycles}\quad

The proof here is similar to the one above. For a positive number $A>0$  we consider the cylinder  $\mathbb{Y}_{A} = \{z=x+iy\ : \ y\geq A\ \mathrm{ and}\  -\frac{1}{2}<x\leq \frac{1}{2}\}\subset \mathbb{H}$, where we identify the two vertical sides and include  a cusp at infinity. Let $\Delta = -y^{2}(\frac{\partial^{2}}{\partial x^{2}} + \frac{\partial^{2}}{\partial y^{2}})$ be the Laplacian and let $\psi(z)$ is a real-valued eigefunction on $\mathbb{Y}_{A}$ with the eigenvalue $\lambda_{\psi} = \frac{1}{4} + t_{\psi}^{2}$, periodic in $x$ and vanishing at the cusp.  We assume that $\psi$ is $L^{2}$ in $\mathbb{Y}_{A}$. For $A\leq Y <Y_{0}$ we consider the image of the rectangle $\mathcal{R}_{Y,Y_{0}} = \{z=x+iy\ ;\ Y\leq y\leq Y_{0}\ \mathrm{ and}\  -\frac{1}{2}<x\leq \frac{1}{2}\}$ in $\mathbb{Y}_{A}$   (we will denote $\mathcal{R}_{Y,\infty}$ by $\mathcal{R}_{Y}$).

We apply Green's theorem on $\mathbb{R}^{2}$ to the oriented rectangle $\mathcal{R}=\mathcal{R}_{Y,Y_{0}}$ in the form $\int_{\partial\mathcal{R}} P\mathrm dx +Q\mathrm dy = \iint_{\mathcal{R}}\left(\frac{\partial Q}{\partial x}-\frac{\partial P}{\partial y}\right)\ \mathrm dx\mathrm dy $. We choose $P= \frac{\lambda_{\psi}}{y^{2}}\psi^{2} + (\partial_{y}\psi)^{2} - (\partial_{x}\psi)^{2}$ and $Q= -2(\partial_{x}\psi)(\partial_{y}\psi)$ so that $\left(\frac{\partial Q}{\partial x}-\frac{\partial P}{\partial y}\right)= \frac{2\lambda_{\psi}}{y^{3}}\psi^{2}$. Since $\psi$ is periodic in $x$, the contribution from the vertical integrals cancel. For $A\leq y<\infty$, denote by $\mathcal{C}_{y}$ the closed loop on $\mathbb{Y}_{A}$ given by the points $z = x+iy$, with $-\frac{1}{2} < x\leq \frac{1}{2}$. Then we have
\[
\begin{split}
\int\limits_{\mathcal{C}_{Y}}\left(\frac{\lambda_{\psi}}{y^{2}}\psi^{2} + (\partial_{y}\psi)^{2} - (\partial_{x}\psi)^{2}\right) \mathrm dx = \int\limits_{\mathcal{C}_{Y_{0}}}\left(\frac{\lambda_{\psi}}{y^{2}}\psi^{2} + (\partial_{y}\psi)^{2}- (\partial_{x}\psi)^{2}\right)\mathrm dx  \\ + 2\lambda_{\psi}\iint\limits_{\mathcal{R}_{Y,Y_{0}}} \psi^{2} \frac{1}{y^{3}} \mathrm dx\mathrm dy \ .
\end{split}
\]
We now let $Y_{0} \to \infty$ so that the integral over $\mathcal{C}_{o}$ vanishes. This is due to the fact that $\psi$ has a Fourier expansion of the form
\[
\psi(z) = \sqrt{y}\sum_{n \neq 0} a_{\psi}(n) \mathrm{ K}_{it_{\psi}}(2\pi |n|y)e^{2\pi inx},
\]
where  $\mathrm{ K}_{s}(y)$  is the MacDonald-Bessel function having exponential decay in $y$ and the Fourier coefficients have at most polynomial growth in $n$. Using the invariant arclength measure $\mathrm ds = Y^{-1}\mathrm dx$ on $\mathcal{C}_{Y}$, we have

\begin{proposition}\label{CH20} For  fixed $Y \geq A$ and $\psi$ as above,
\begin{equation}\label{new1}
\int\limits_{\mathcal{C}_{Y}}\psi^{2}  \mathrm ds  + \frac{1}{\lambda_{\psi}}\int\limits_{\mathcal{C}_{Y}}(y\partial_{n}\psi)^{2}  \mathrm ds  \geq 2Y\iint_{\mathcal{R}_{Y}} \psi^{2}y^{-1} \ \mathrm d\mu ,
\end{equation}
where $\partial_{n}\psi$ is the normal derivative.
\end{proposition}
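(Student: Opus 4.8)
The closed-horocycle identity is essentially assembled in the paragraph preceding the statement, so the plan is to organise that computation into a clean derivation of \eqref{new1}. First I would fix $Y\ge A$ and $Y_0>Y$ and apply Green's theorem on $\mathbb{R}^2$ to the rectangle $\mathcal{R}_{Y,Y_0}$ with the indicated $P=\frac{\lambda_\psi}{y^2}\psi^2+(\partial_y\psi)^2-(\partial_x\psi)^2$ and $Q=-2(\partial_x\psi)(\partial_y\psi)$. Substituting $\partial_x^2\psi+\partial_y^2\psi=-\lambda_\psi y^{-2}\psi$ from the eigenvalue equation gives $\partial_xQ-\partial_yP=2\lambda_\psi y^{-3}\psi^2$, which is nonnegative. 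Since $\psi$ is periodic in $x$, so is $Q$, and the contributions of the two vertical sides $x=\pm\tfrac12$ to $\int_{\partial\mathcal{R}_{Y,Y_0}}(P\,dx+Q\,dy)$ cancel; only the two horizontal sides $\mathcal{C}_Y$ and $\mathcal{C}_{Y_0}$ survive, producing the identity displayed just before the Proposition statement.

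Next I would let $Y_0\to\infty$. Using the Fourier expansion $\psi(z)=\sqrt{y}\sum_{n\ne 0}a_\psi(n)K_{it_\psi}(2\pi|n|y)e^{2\pi inx}$ — legitimate because $\psi\in L^2(\mathbb{Y}_A)$ vanishes at the cusp, which kills the zeroth coefficient — the exponential decay of $K_{it_\psi}$ together with the polynomial growth of the $a_\psi(n)$ gives exponential decay in $y$ of $\psi$, $\partial_x\psi$ and $\partial_y\psi$ (term-by-term differentiation is valid for $y$ bounded away from $A$). Hence $\int_{\mathcal{C}_{Y_0}}P\,dx\to 0$ and $\iint_{\mathcal{R}_{Y,Y_0}}\psi^2y^{-3}\,dx\,dy$ increases to a finite integral over $\mathcal{R}_Y$, leaving the exact identity
\[
\int_{\mathcal{C}_Y}\Bigl(\tfrac{\lambda_\psi}{Y^2}\psi^2+(\partial_y\psi)^2-(\partial_x\psi)^2\Bigr)\,dx \;=\; 2\lambda_\psi\iint_{\mathcal{R}_Y}\psi^2\,y^{-3}\,dx\,dy .
\]

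To finish, I would discard the nonpositive term $-\int_{\mathcal{C}_Y}(\partial_x\psi)^2\,dx$ and pass to invariant measures: on $\mathcal{C}_Y$ one has $ds=Y^{-1}dx$, so $\int_{\mathcal{C}_Y}f\,dx=Y\int_{\mathcal{C}_Y}f\,ds$, while $y^{-3}dx\,dy=y^{-1}d\mu$. Rearranging and multiplying by $Y/\lambda_\psi$ gives
\[
\int_{\mathcal{C}_Y}\psi^2\,ds+\frac{Y^2}{\lambda_\psi}\int_{\mathcal{C}_Y}(\partial_y\psi)^2\,ds\;\ge\;2Y\iint_{\mathcal{R}_Y}\psi^2\,y^{-1}\,d\mu ,
\]
and since $y\equiv Y$ on $\mathcal{C}_Y$ we may rewrite $Y^2(\partial_y\psi)^2=(y\,\partial_n\psi)^2$, where $\partial_n=\partial_y$ so that $y\,\partial_n\psi$ is the unit hyperbolic normal derivative; this is \eqref{new1}.

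The only step that is not pure formalism is the limit $Y_0\to\infty$, where the $L^2$ and cuspidality hypotheses on $\psi$ are used to make both the vanishing of the top-edge integral and the convergence of the area integral rigorous. Everything else is the Green's-theorem bookkeeping already sketched before the statement, together with the trivial observation that deleting the tangential-derivative term $(\partial_x\psi)^2$ only weakens the equality to the stated inequality.
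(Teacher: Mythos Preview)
Your proposal is correct and follows essentially the same approach as the paper: Green's theorem on $\mathcal{R}_{Y,Y_0}$ with the indicated $P,Q$, cancellation of the vertical sides by periodicity, the limit $Y_0\to\infty$ via the Fourier--Bessel expansion, and the passage to invariant arc-length and area measures. The only point you make more explicit than the paper is the step where the equality is weakened to an inequality by discarding the nonpositive tangential term $-\int_{\mathcal{C}_Y}(\partial_x\psi)^2\,dx$, which the paper leaves implicit in stating \eqref{new1}.
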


The Proposition is sharp in the following sense: suppose there are smooth positive functions $\alpha(y)$ and $G(y)$ such that
\begin{equation}\label{CH10}
\int\limits_{\mathcal{C}_{Y}}\psi^{2}  \mathrm ds  +  \frac{1}{\lambda_{\psi}}\int\limits_{\mathcal{C}_{Y}}(y\partial_{n}\psi)^{2}  \mathrm ds  \geq 2\alpha(Y)\iint\limits_{\mathcal{R}_{Y}} \psi^{2}G(y) \ \mathrm d\mu\ ,
\end{equation}
for all $Y\geq A$ and eigenfunctions $\psi$.  We seek $\alpha$ and $G$ that maximize the right hand side and so may assume that $\alpha(Y)G(y) \geq Yy^{-1}$ for all $y\geq Y\geq A$. By substituting the Fourier series for $\psi$ in \eqref{CH10} , one sees that \eqref{CH10} holds with $\psi$ replaced by $\mathrm{ K}_{it_{\psi}}^{*}(ny)$  for each $n\geq 1$, where  $\mathrm{ K}_{it_{\psi}}^{*}(y)= \sqrt{y} \mathrm{ K}_{it_{\psi}}(y)$ satisfies the differential equation $z''(y) +a(y)z(y)=0$ with $a(y)= \frac{\lambda_{\psi}}{y^2} -1$.  Put $u(y)= a(y)\mathrm{ K}_{it_{\psi}}^{*}(y)^{2} + (\partial_{y}\mathrm{ K}_{it_{\psi}}^{*}(y))^{2}$ where $\partial_{y}$ is the derivative with respect to $y$. Then one sees that $u'(y)=-2\frac{\lambda_{\psi}}{y^{3}}\mathrm{ K}_{it_{\psi}}^{*}(y)^{2}$ so that $u(y) =2\lambda_{\psi} \int_{y}^{\infty} \mathrm{ K}_{it}^{*}(y)^{2}y^{-3}\mathrm dy \ $. We then conclude from \eqref{CH10} with $n=1$ that 
\[
Y \mathrm{ K}_{it_{\psi}}^{*2}(Y) \geq  2\lambda_{\psi} \int\limits_{Y}^{\infty}\mathrm{ K}_{it_{\psi}}^{*2}(y)\left(\alpha(Y)G(y)-\frac{Y}{y}\right) \frac{\mathrm dy}{y^{2}} \geq 0.
\]
 Suppose  for some $\delta >0$ we have $(\alpha(u)G(v)-\frac{u }{v})> \delta $  for $(u,v)\in [u_{0},u_{1}]\times[u,v_{1}]$.  We choose $Y = u_{0}$  so that 
\[
Y \mathrm{ K}_{it_{\psi}}^{*2}(Y) \geq  2\delta\lambda_{\psi} \int\limits_{Y}^{v_{1}}\mathrm{ K}_{it_{\psi}}^{*2}(y) \frac{\mathrm dy}{y^{2}} \geq  2\delta\lambda_{\psi} \int\limits_{Y}^{Y +\delta}\mathrm{ K}_{it_{\psi}}^{*2}(y) \frac{\mathrm dy}{y^{2}},
\]
for $\delta$ small enough, independent of $\lambda_{\psi}$. Letting   $\lambda_{\psi}\to \infty$ and noting that the  asymptotics of the Bessel functions on both sides are of comparable size,  we derive a contradiction. Hence $\alpha(Y)G(y) = Yy^{-1}$ in \eqref{CH10}.

For the modular surface with the standard fundamental domain  truncated at $y=A=1$ we may apply Prop. \ref{CH20} to a Maass eigenform. The curves $\mathcal{C}_{Y}$ correspond to closed horocycles with $Y\geq 1$. The double integral in \eqref{new1} can then be bounded below by a constant using arithmetic QUE (compare with Theorem 1.1 of \cite{GRS1}).
\par

\subsection{Closed geodesics}\quad \label{closed-geo-sect}

The situation here starts the same way as the preceding two cases. We consider a compact surface which we view as a cylinder, closed at one end and having a circular boundary (a closed geodesic) at the other. This can be obtained by the action of  $\Gamma$,  a cocompact subgroup of $\PSL(2,\mathbb{R})$, on $\mathbb{H}$. Let $\ell$ denote a closed geodesic on $\mathbb{X}=\Gamma \backslash \mathbb{H}$. We transform the group to a conjugate group so that the closed geodesic is the arc $\{ z=iy : 1\leq y\leq k\}$ where $k>1$ and $\gamma =\left(\begin{smallmatrix}\sqrt{k}&0\\ 0& \frac{1}{\sqrt k}\end{smallmatrix}\right) \in \Gamma$. Using geodesic polar coordinates $(\rho,\theta)$, we have $x=e^{\theta}\mathrm{ tanh}\ \rho$ and $y=e^{\theta}\mathrm{ sech}\ \rho$ so that the closed geodesic corresponds to $\rho = 0$ and $0\leq \theta \leq \kappa$, with the endpoints identified and with $\kappa = \log k$. Let $\phi(z) =\phi(\rho,\theta)$ be  an even $L^{2}(\mathbb{X})$-normalised Maass form with eigenvalue $\lambda_{\phi} = \frac{1}{4} + t_{\phi}^{2}$, satisfying  $\phi(\rho,\theta + \kappa)=\phi(\rho,\theta)$ and $\rho \geq 0$. Then, we consider the cylinder $\mathcal{R}_{\zeta} = \{(\rho,\theta) : 0\leq \rho \leq \zeta \ \mathrm{ and} \ 0\leq \theta \leq \kappa\}$ (giving us a flared cylinder on $\mathbb{X}$). Let $\ell_{\zeta}$ denote the closed curve on $\mathbb{X}$ given by $\rho=\zeta$, so that $\ell=  \ell_{0}$ is the closed geodesic.

The (even) eigenfunction $\phi$ has the Fourier expansion of the type 
\begin{equation}\label{z45}\phi(\rho,\theta)=\sum_{n\in \mathbb{Z}} \alpha_{n}c_{n}(\sinh \rho) e(\frac{n\theta}{\kappa})\ ,
\end{equation}
where $c_{n}(s)= e_{-\frac{1}{2}+it_{\phi}}^{i\mu_{n}}(s)$ is a conical function with  $\mu_{n}=\frac{2\pi |n|}{\kappa}$. These conical functions are real and even, satisfying $e_{-\frac{1}{2}+it_{\phi}}^{i\mu_{n}}(0)=1$ and $\frac{\mathrm d}{\mathrm{ d s}}e_{-\frac{1}{2}+it_{\phi}}^{i\mu_{n}}(0)=0$ (see \cite{Dun}).

We apply Green's theorem as in the previous cases : here we choose 
\[P(\rho,\theta)= \lambda_{\phi}\cosh^{2}(\rho) \phi^{2} + \cosh^{2}(\rho) (\partial_{\rho}\phi)^{2} - (\partial_{\theta}\phi)^{2}\  \mathrm{ and} \  Q(\rho,\theta) = -2 (\partial_{\rho}\phi)(\partial_{\theta}\phi).
\]
 If we put $G(\zeta) =  \cosh^{2}(\zeta)\left(\lambda_{\phi}\int_{\ell_{\zeta}} \phi^{2}\mathrm d \theta +\int_{\ell_{\zeta}} (\partial_{\rho}\phi) ^{2}\mathrm d \theta\right) -  \int_{\ell_{\zeta}} (\partial_{\theta}\phi)^{2}\mathrm d \theta$, we conclude that
\begin{equation}\label{CH1}
G(\zeta) = G(0) + 2\lambda_{\phi}\iint\limits_{\mathcal{R}_{\zeta}}\sinh(\rho)\phi^{2}\mathrm d\mu.
\end{equation}

To obtain the analog of  the propositions in the two cases considered above, it would be ideal if one could show $G(0) \geq 0$. However in this generality, that is not true . This can be seen quite easily since the Fourier expansion implies that 
\[
G(0) = \kappa\sum_{n}\alpha^{2}_{n}(\lambda_{\phi} - \mu_{n}^{2}),
\]
so that if $\phi$ is supported only for $n$ with $\mu_{n}>\sqrt{\lambda_{\phi}}$  we have $G(0)<0$. If one were to incooperate the term $\beta(\zeta):= \int_{\ell_{\zeta}} (\partial_{\theta}\phi)^{2}\mathrm d \theta$ from $G(\zeta)$ into $G(0)$, then using the asymptotics of the conical functions \cite{Dun}, (8.18) for $\mu_{n}>(1+\delta)\sqrt{\lambda_{\phi}}$, the corresponding terms in the sum for $G(0) + \beta(\zeta)$ can be shown to be positive. There remains a resonance range (see Remark \ref{resonance}) of $n$'s with $\sqrt{\lambda_{\phi}}<\mu_{n}\leq(1+\delta)\sqrt{\lambda_{\phi}}$ for which we are unable to show positivity due to the fluctuations of the conical functions. However it is possible to obtain an inequality with an error term if one assumes some strong conditions on the size of the Fourier coefficients.

The question remains if one can show for a closed geodesic $\ell_{0}$ an analog of \eqref{CH10}  of the following form: does there exist   a function $G(\rho)\geq 0$ but not always zero, such that

\[
\int\limits_{\ell_{0}} \phi^{2}\mathrm d \theta +\frac{1}{\lambda_{\phi}}\int\limits_{\ell_{0}} (\partial_{\rho}\phi) ^{2}\mathrm d \theta \geq 2\iint\limits_{\mathcal{R}_{\zeta}}G(\rho)\phi^{2}\mathrm d\mu,
\]
for some $\zeta >0$ ?   We find that this too is not possible. Using the Fourier series above and the properties of the conical functions on the closed geodesic, it follows that there is a $\delta >0$ small enough, and a $\zeta >0$ such that for all $n\in \mathbb{Z}$
\[\kappa \geq 2\delta\iint\limits_{\mathcal{R}_{_{\zeta, \zeta+\delta}}}\left (e_{-\frac{1}{2}+it_{\phi}}^{i\mu_{n}}(\rho)\right)^{2}\sinh(\rho)\mathrm d\rho. 
\]

We now use the asymptotics of the conical function off the closed geodesic. By \cite{Dun}, (8.18) we have that for $n$ in a suitable range (see above) relative to $\lambda_{\phi} \to \infty$ and $\zeta$, one can make the conical function grow exponentially in the parameter $t_{\phi}$ so that we have a contradiction, giving us $G(\rho)=0$.

\vspace{20pt}
\section{Uniform lower bound for the norm of a geodesic restriction}
Our aim here to prove Theorem \ref{closed-g-intro}. We show that it is a consequence of the QUE property of eigenfunctions. We use representation theory of the group $\PGLR$ to carry out the necessary analysis. There is no arithmetic input in this section (beyond the fact that QUE is known only in the arithmetic situation). 
\subsection{Restriction to closed geodesics}\label{closed-geodesic-sect}\ 

 Let $\bX$ be a compact oriented Riemann surface endowed with a hyperbolic Riemann metric $g_\bX$, the corresponding (positive) Laplace-Beltrami operator $\Dl$ and the volume element $d_\bX vol$ as in Section \ref{geom-pre}. Let  $\ell\subset \bX$ be a closed geodesic. We are interested in a {\it lower} bound for the $L^2$-norm of the restriction to $\ell$ for  eigenfunctions  with large eigenvalue. We denote by $\psi_\tau$ an $L^2$-normalized (i.e., by $\|\psi_\tau\|_{L^2(\bX)}=1$) eigenfunction  of $\Dl$ with the eigenvalue $\lm=(1-\tau^2)/4$, $\tau\in i\br\cup (0,1]$ (slightly deviating from our previous notation $\lambda_{\psi} = \frac{1}{4} + t_{\psi}^{2}$).

Our results are applicable to sequences of Maass forms satisfying a certain equidistribution property called the quantum unique ergodicity (QUE) property.  In simple words, QUE for a sequence $\{\psi_{\tau_i}\}_{i=1}^{\8}$ of eigenfunctions means that the probability measures $|\psi_{\tau_i}|^2d_\bX vol$  became equidistributed on $\bX$ when $\lm_i\to\8$. In practice, one needs to extend the equidistribution property to microlocalization of eigenfunctions. This is done in representation-theoretic language by Zelditch in \cite{Z1}, \cite{Z2} (see Section \ref{QUE-sect} for the exact formulation we use). According to the  theorem of Shnirel'man (see \cite{Sh}, \cite{CdV}, \cite{Z2}) any orthonormal basis of Maass forms contains a full density subsequence satisfying the QUE property. In fact,  it was conjectured in \cite{RS} that QUE  holds for a complete orthonormal basis  for a general compact hyperbolic surface. QUE  is known to hold for arithmetic surfaces coming from congruence subgroups and for the special (arithmetic) basis of Hecke-Maass forms thanks to the striking work of Lindenstrauss \cite{Li} (see also \cite{BrooksLi} for results involving just one Hecke operator).

\begin{theorem}\label{thm-geodesic-low-bound} Let $\{\psi_{\tau_i}\}_{i=1}^{\8}$ be a sequence of Maass forms satisfying the QUE property. There exists a constant $a>0$ such that the following bound holds:
\begin{equation}\label{phi-lower-bound}
\int\limits_\ell\left(|\psi_{\tau_i}|^2+\lm_i\inv\cdot|\partial_n\psi_{\tau_i}|^2\right)\mathrm d\ell\geq a\
\end{equation}  for all $i$. Here $\partial_n\psi_\tau$ denotes the normal derivative along the closed geodesic $\ell\subset \bX$.
\end{theorem}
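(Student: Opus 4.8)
The plan is to show that a uniform lower bound for the microlocal lift of $|\psi_\tau|^2$ near the unit cotangent bundle $T_1^*\ell$ over the geodesic forces a lower bound on the boundary data, and that QUE supplies exactly such a positive-mass statement. First I would set up the representation-theoretic framework: realize $\psi_\tau$ inside an automorphic representation $\pi_\tau\subset L^2(\Gamma\backslash G)$, $G=\PGLR$, and consider the diagonal subgroup $A\cong\br$ whose orbit on $\Gamma\backslash G/K=\bX$ projects onto $\ell$ (after conjugating so that $\ell$ corresponds to $\rho=0$ as in Section \ref{closed-geo-sect}). The restriction $\psi_\tau|_\ell$ together with its normal derivative should be encoded as a vector in the model of $\pi_\tau$ obtained by restricting to the geodesic, i.e. via the $A$-action; the combination $|\psi_\tau|^2+\lm\inv|\partial_n\psi_\tau|^2$ is, up to bounded factors, the natural $L^2$-norm of the "first two $K$-types along $\ell$" and is comparable (uniformly in $\tau$) to a fixed Sobolev-type norm of the pullback. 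This is where the $\lm\inv$ weighting is essential: it makes the two terms homogeneous of the same order and removes the $\tau$-dependence of the equivalence of norms.

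Next I would use Green's identity \eqref{CH1} in the form already derived: for the flared cylinder $\mathcal{R}_\zeta$ of small fixed width $\zeta$ around $\ell$,
\[
\cosh^2(\zeta)\Bigl(\lm_\phi\!\int_{\ell_\zeta}\!\phi^2\,\mathrm d\theta+\int_{\ell_\zeta}\!(\partial_\rho\phi)^2\,\mathrm d\theta\Bigr)-\int_{\ell_\zeta}\!(\partial_\theta\phi)^2\,\mathrm d\theta = G(0)+2\lm_\phi\!\iint_{\mathcal{R}_\zeta}\!\sinh(\rho)\,\phi^2\,\mathrm d\mu .
\]
The point is that the left side at $\zeta$, divided by $\lm_\phi$, is again controlled by $\int_{\ell_\zeta}(|\phi|^2+\lm\inv|\partial_n\phi|^2)\,\mathrm d\ell$ plus a harmless tangential term $\lm\inv\int_{\ell_\zeta}(\partial_\theta\phi)^2$, and integrating this identity over $\zeta\in[0,\zeta_0]$ relates the desired boundary quantity at $\ell=\ell_0$ to the mass of $|\phi|^2$ in the thin tube $\mathcal{R}_{\zeta_0}$. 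So it suffices to prove that QUE gives a uniform lower bound for $\lm_\phi\inv$ times the left-hand boundary term at some scale, equivalently for the normalized mass in a tube — but that mass $\to \Vol(\text{tube})/\Vol(\bX)>0$ by QUE, which is positive and fixed. Thus modulo controlling the tangential derivative term and the sign of the leftover contributions, the inequality falls out.

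The subtlety — and the main obstacle — is exactly the tangential-derivative term $\lm\inv\int_{\ell_\zeta}(\partial_\theta\phi)^2\,\mathrm d\theta$ and the negative $G(0)$ that can appear (Section 3.5 shows these genuinely obstruct a naive positive-weight inequality). One cannot bound $\int_{\ell_\zeta}(\partial_\theta\phi)^2$ by $\lm_\phi\int_{\ell_\zeta}\phi^2$ pointwise in $\zeta$; mass can concentrate in high tangential frequencies ($\mu_n$ slightly above $\sqrt{\lm_\phi}$, the "resonance range" of Remark \ref{resonance}). This is where full microlocal QUE, not just QUE for the measures $|\psi_\tau|^2 d_\bX vol$, is needed: I would invoke Zelditch's microlocal lift (\cite{Z1},\cite{Z2}), producing a limit measure on $T_1^*\bX$ that is $A$-invariant (geodesic-flow invariant) and, under QUE, equals Liouville measure. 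Testing against a symbol supported near $T_1^*\ell$ and exploiting the flow-invariance to "spread" the restriction along $\ell$ controls precisely the tangential-frequency distribution: the resonant part cannot carry all the mass because the Liouville measure assigns positive mass to every open cone of directions, in particular to directions transverse to $\ell$, which is what the normal-derivative term detects. Concretely I expect the argument to run by contradiction: if \eqref{phi-lower-bound} failed along a subsequence, the microlocal defect measure would have to be supported in the "bad" resonant set over $\ell$, contradicting its identification with Liouville measure (or even just contradicting positivity of mass transverse to $\ell$). Averaging in $\zeta$ over the width of the tube, together with the Green identity above, then upgrades this qualitative statement to the stated uniform bound, with the constant $a>0$ depending only on $\ell$ and $\bX$ through the tube volume and the injectivity radius.
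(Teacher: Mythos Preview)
Your proposal identifies the right enemy (the resonance range $\mu_n\approx\sqrt{\lm_\phi}$ and the tangential term $\lm\inv\int_{\ell}(\partial_\theta\phi)^2$) and correctly notes that microlocal QUE, not just QUE for $|\psi|^2\,\mathrm d\mu$, is what is needed. But the argument you outline does not close. The Green identity \eqref{CH1} gives
\[
\int_{\ell}\phi^2+\lm\inv\int_{\ell}(\partial_\rho\phi)^2=\lm\inv G(0)+\lm\inv\int_{\ell}(\partial_\theta\phi)^2,
\]
and Section \ref{closed-geo-sect} shows explicitly that $G(0)$ can be negative and that no positive-weight inequality of this type holds. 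Your fix --- ``argue by contradiction via the microlocal defect measure'' --- is the step that is missing, not supplied: you assert that if \eqref{phi-lower-bound} fails then the limit measure must be supported in the resonant set over $\ell$, but you give no mechanism linking the Cauchy data on $\ell$ to mass of the lift near $T_1^*\ell$. That link is precisely the content of restriction-QUE theorems (as in \cite{CTZ}), and it is nontrivial; ``testing against a symbol supported near $T_1^*\ell$'' controls mass in a tube, not the trace on a hypersurface. Averaging the Green identity over $\zeta$ does not help either, since the problematic term lives at $\zeta=0$.

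The paper's proof avoids the Green identity entirely and is more constructive. The key structural input is the multiplicity-one theorem for $A$-equivariant functionals on irreducible representations of $\PGLR$ (not $\PSLR$ --- this is why both connected components of $Z$ are used). This gives the factorization $d^{\mathrm{aut}}_\chi=a_\chi(\tilde\psi_\tau)\cdot d_{s,\epsilon;\tau,\eps}$ of \eqref{a-n-coeff-deff}, and the expansion \eqref{expansion-on-l} reduces the problem to a lower bound on $\sum_{|s_n|\le\omega|\tau|}|a_{\chi_{s_n,\epsilon}}|^2$ (Theorem \ref{thm-a-lower}), since in that range the model values $|d_{s,\epsilon;\tau,\eps}(e_0)|^2$ and $|d_{s,\epsilon;\tau,\eps}(e_0')|^2$ are computed exactly via the $\Gamma$-function and satisfy \eqref{d-chi-e-0-lower-bound}, \eqref{d-chi-e-0-lower-bound-prime}. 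The lower bound on the coefficients is then proved by choosing a \emph{fixed} test vector $v\in C^\8_{\mathrm{even}}(S^1)$ supported near $\pm\pi/4$ --- so that the stationary-phase point of $d_{s,\epsilon;\tau,\eps}(\pi(g)v)$ stays away from the amplitude singularities for all $g$ in a small neighbourhood $U$, yielding the uniform spectral bounds \eqref{v-regular-range}--\eqref{v-cutoff} --- and by ``fattening'' the geodesic: one shows $\int_{l\cdot U}|\psi_v|^2 f\,\mathrm d\mu\ge b>0$ by approximating $v$ with a $K$-finite sum and applying QUE in the form \eqref{QUE}. The resonance range is thus handled not by a contradiction argument on defect measures but by an explicit choice of test vector whose spectral support avoids it.
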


\noindent{\it Remark.} The constant $a$ in the theorem is non-effective as long as the rate of convergence in QUE is not known. 

Our proof uses in an essential way the full force of QUE on the co-tangent bundle of $\bX$. We approach the related analysis via representation theoretic language. 

We describe now the translation of the restriction problem into the well-known representation-theoretic language introduced by Gelfand and Fomin (see \cite{B}, \cite{GPS}, \cite{L} among many other sources).

\subsection{Representation theory}
\subsubsection{Group action}\label{gr-act-setup}\  Let $\bG=\PGLR$. We denote by $\bG^+$ the neutral  connected component of $\bG$ , and by $\bG^-$ the second connected component. In what follows it is crucial that we work with both components.  The reason is that we will use the crucial multiplicity one property (which fails for $\bG^+\simeq\PSLR$). We will identify the compact subgroup $K$ from Section \ref{geom-pre} with $K=\PSO(2,\br)\subset\bG$.  As before, there exists a lattice $\G\subset \bG^+$  such that the Riemann surface $\bX$ is given by the quotient $\bX\simeq\G\backslash \bG^+/K$.  We denote by $Z=\G\backslash \bG$ what we call the automorphic space and by $Z_\pm=\G\backslash \bG^\pm$ two of its connected components. The Riemann surface $\bX_-=Z_-/K$ is naturally identified with the orientation reversed copy of $\bX$. Let $R$ be the right action of $\bG$ on functions on $Z$ (i.e., $R(g)f(x)=f(xg)$). We extend the volume element on $\bX$ to the $\bG$ invariant measure $\mu$ on $Z$. The representation $R$ becomes a unitary representation of $\bG$ on $L^2(Z,\mu)$.

Let $A=\{\mathrm{ diag}(a,b)\}/\{\mathrm{ diag}(a,a)\}\subset \bG$ be the full (disconnected) diagonal subgroup, and denote by $A_\pm=A\cap \bG^\pm$ its connected components. Let $\dl=\left(
                                          \begin{smallmatrix}
-1  & \\
                                   &\ \ 1 \\
\end{smallmatrix}
                                        \right)\in \GLR$,
and we denote by the same letter the corresponding element in $\bG$ (in general, we will often construct elements in $\bG$ by their representatives in $\GLR$). We have $A=A_+\cdot\langle\dl\rangle$. We now consider closed $A$-orbits. To a (closed) geodesic $\ell\subset \bX$ there corresponds  a (closed) $A_+$-orbit  in $Z_+$, which we denote by the same letter $\ell$. If $\ell\subset Z_+$ is a closed $A_+$-orbit, then $l=\ell\cdot A=\ell\cup \ell\cdot\dl\subset Z$ is a closed $A$-orbit consisting of two connected components $l_\pm\subset Z_\pm$ interchanged by $\dl$.
We have then the {\it pointwise} stabilizer  subgroup $A_l=\mathrm{ Stab}_A  l=\mathrm{ Stab}_{A_+}  \ell=\langle a_l\rangle \simeq \bz$,
 where $\bar a_l=\mathrm{ diag}(e^q,e^{-q})\in A_+$, $q>0$, is an element which,  as in Section \ref{closed-geo-sect}, is conjugated to a primitive hyperbolic element   $\g_l\in \G$ corresponding to the closed geodesic $\ell$ on $\bX$ (i.e., $g_l\inv \g_l g_l=\bar a_l$ for an appropriate $g_l\in \bG^+$ and  the corresponding geodesic is given by $\ell=g_l A_+\cdot i\subset \bX\simeq\G\backslash\bG^+/K$).

We fix an $A$-invariant measure $\mathrm dl$ on $l$ (e.g., consistent with the Riemannian length of $\ell$).

 We consider characters of the disconnected group $A$. Any such unitary character is given by $\chi_{s,\epsilon}\left(\bar a\right)=|a|^s\cdot \mathrm{ sign}(a)^\epsilon$, where $\bar a=\mathrm{ diag}(a,|a|\inv)\in A$, $s\in i\br$, and $\epsilon\in\{0, \ 1\}$.  A character  is called  {\it even} if $\epsilon=0$  and {\it odd} if $\epsilon=1$ (note that  $\chi_{s,\epsilon}(\dl)=(-1)^\epsilon$).

 We next consider characters which are trivial on the subgroup $A_l$.
These are given by $\chi_{s_n,\epsilon}$ with $s_n=2\pi in/q$, $n\in \bz$, and $\epsilon\in\{0,1\}$.

We now construct $A$-equivariant functionals on the space of smooth functions on $Z$.  Let $l\subset Z$ be a closed $A$-orbit as above. We fix a point $\al_0\in l$ and associate the function $\chi^\cdot(\al_0\cdot \bar a)=\chi(\bar a)$ to any  character $\chi=\chi_{s_n,\epsilon}$ which is trivial on $A_l$.  This gives rise to an  $A$-equivariant functional $d^{\mathrm{aut}}_{\chi}:C^\8(Z)\to\bc$  given by the integration $d^{\mathrm{aut}}_\chi(f)=\int_{t\in l}f(t)\bar\chi^\cdot(t)\mathrm dl$. The functional $d^{\mathrm{aut}}_{\chi}$ is $\chi$-equivariant with respect to the right action of $A$: $d_\chi(R(\bar a)f)=\chi(\bar a)f$.

\subsubsection{Maass forms and representations}\  Let $\psi_\tau$ be a Maass form as above. By the principle of Gelfand and Fomin (\cite{GPS}), $\psi_\tau$ generates an irreducible unitary representation $V_{\psi_\tau}$ of the principal series under the action of $\PSLR$ on the space $L^2(Z_+,\mu)$ (in fact, we only consider the smooth part $V_{\psi_\tau}\subset C^\8(Z_+)$ of the corresponding representation). Such a representation is called an automorphic representation (of $\PSLR$). The function $\psi_\tau\in V_{\psi_\tau}$ corresponds  to (up to a multiple)  the unique (up to a scalar) $K$-invariant  function in $V_{\psi_\tau}$  under the natural imbedding  $C^\8(\bX)\subset C^\8(Z_+)$.

For reasons explained later (i.e., uniqueness of invariant functionals), we need to switch to representations of $\bG$. To do this, we extend the function $\psi_\tau\in C^\8(Z_+)$ to a $\dl$-even function  $\tilde\psi_\tau:Z\to\bc$ by
$\tilde\psi_\tau(z\dl)=\psi_\tau(z)$ for $z\in Z_+$. It is then easy to see that the (smooth) representation $V_{\tilde\psi_\tau}\subset C^\8(Z)$ of $\bG$ generated by $\tilde\psi_\tau$ is  an irreducible representation of $\bG$. In fact, one can easily see that the space generated by the function $\psi_\tau$ extended by
{\it zero} to $Z_-$ is the sum of two irreducible representations of $\bG$. The second irreducible component is generated by  $\psi_\tau$ extended  to a $\dl$-odd function on $Z$. Note that the restriction of functions on $Z$ to functions on $Z_+$ maps $V_{\tilde\psi_\tau}$ to $V_{\psi_\tau}$. While  this is not the restriction of a representation of $\bG$ to a representation of $\bG^+$, it commutes with the action of $\bG^+$. 

\begin{remark}We also would like to point out that the action of $\dl$ on functions on $Z$ should not be confused with a symmetry $\s$ of the Riemann surface $\bX$ stabilizing a geodesic which we crucially exploit in Section \ref{geom-pre}. The element $\dl$ is a part of a general setup we use and comes from the action of $\bG$ on connected components of $Z$. In particular, this is present for all Riemann surfaces and not only for those with $\s$-symmetry. This is the reason our restriction Theorem \ref{thm-geodesic-low-bound} holds for a general surface and a closed geodesic. The (orientation reversing) symmetry $\s$ appears once we have identification of surfaces $\bX$ and $\bX_-$\ . \end{remark}

 All unitary representations of $\bG$ are well-known, and in particular could be modeled in spaces of homogenous functions on $\br^2\backslash\{0\}$ (or in spaces of functions on $S^1$, $\br$, etc.). We will always work with the spaces of smooth vectors of such representations. In fact, we consider principal series representations of $\GLR$ which are  trivial on the center. These are parametrized by two parameters  $\tau\in i\br$ and $\eps\in\{0, 1\}$.  The corresponding space $V_{\tau,\eps}$ could be realized as the space of even smooth homogeneous functions of the homogeneous degree $\tau-1$ on the plane $\br^2\backslash\{0\}$, that is  $f(\al\cdot x)=|\al|^{\tau-1}f(x)$ for $x\in \br^2\backslash\{0\}$ and $t\in \br^\times$. The action of $\GLR$ which is trivial on the center is given by the natural action on $\br^2$, namely $\pi_{\tau,\eps}(g)f(x)=f(g\inv\cdot x)|\det(g)|^\frac{\tau-1}{2}\mathrm{ sign}(\det(g))^\eps$ for $g\in \GLR$.  Hence the evenness condition is required since $-1\in\GLR$ should act as the identity. The invariant norm on such a representation is given by the (normalized by $1/2\pi$) integration over $S^1\subset \br^2$.

By restricting homogeneous functions to $S^1 \subset \br^2$ we obtain the circle model of the representation $V_{\tau,\eps}\simeq C^\8_{\mathrm{even}}(S^1)$, and by restricting to the line $\{(x,1)|\ x\in \br\}\subset \br^2$ we obtain the line model.  Smooth vectors in the line model are smooth functions on $\br$ with a certain polynomial decay at infinity.  These models are convenient for various computations.

We can view the automorphic representation $V_{\tilde\psi_\tau}\subset C^\8(Z)$ as a realization of the model representation $V_{\tau,\eps}$. Namely, we will denote by $\nu_{\tilde\psi_\tau}:V_{\tau,\eps}\to V_{\tilde\psi_\tau}$ the corresponding $\bG$-map which we will assume to be an isometry. We will denote by $\psi_v=\nu_{\tilde\psi_\tau}(v)\in C^\8(Z)$ the image of a vector $v\in V_{\tau,\eps}$ under the map $\nu_{\tilde\psi_\tau}$.

We choose a (unique up to a multiple) $K$-invariant vector in the space $V_{\tau,\eps}$ to be the constant function $e_0\equiv 1$ on  $S^1$ (extended by the homogeneity to $\br^2\backslash\{0\}$). The vector $e_0$ corresponds to the Maass form $\tilde\psi_\tau$ under the isometry $\nu_{\tilde\psi_\tau}: V_{\tau,\eps}\to V_{\tilde\psi_\tau}\subset C^\8(Z)$, i.e., $\nu_{\tilde\psi_\tau}(e_0)=\tilde\psi_\tau$ on $Z$.  In particular, $\dl$ acts on $e_0$, and by the uniqueness we have $\pi(\dl)e_0=(-1)^\eps e_0$. For the automorphic realization, this translates into  the identity $\tilde\psi_\tau(z\dl)=(-1)^\eps \tilde\psi_\tau(z)$, i.e., (the extension of) the Maass form is assumed to be either $\dl$-even or $\dl$-odd on $Z$. Hence we can assume that our Maass form has even extension. Later on we will have to deal with odd representations as well.

\subsubsection{Closed geodesic expansion}\label{geod-exp-sect} We now construct certain $\chi$-equivariant functionals on a representation of the principal series of $\bG$, and compute these on some special vectors. These functionals give the representation theoretic construction of the special functions $e_{-\frac{1}{2}+it_{\psi}}^{i\mu_{n}}$ in \eqref{z45}. Let $V_\tau=V_{\tau,\eps}$ be such a representation (while we assumed that $\eps=0$, we still will treat the general case). Let $\chi_{s,\epsilon}$ be a character of $A$ as before. To any such character, we associate the corresponding equivariant functional $d_{s,\epsilon;\tau,\eps}$ on the representation $V_{\tau}$ (i.e., an element in $\Hom_A(V_{\tau},\chi_{s,\epsilon})$) given in the line model on $\br$ by
\begin{equation}\label{d-chi-line}
d_{s,\epsilon;\tau,\eps}(v)=\pi\inv\int\limits_{ \br}v(x)|x|^\frac{-1-\tau+s}{2}\mathrm{ sign}(x)^{\epsilon+\eps} \mathrm dx\  ,
\end{equation} for any smooth vector $v\in V_\tau\subset C^\8(\br)$.  The integral is absolutely convergent on functions in $V_\tau$ (since the function $|v(x)|$ decays as $|x|\inv$ at infinity, as required by the smoothness of vectors in $V_\tau$). In the circle model, the same functional is given by  
\begin{equation}\label{d-chi-circle}
d_{s,\epsilon;\tau,\eps}(v)=(2\pi)\inv\int\limits_{ S^1}v(\theta)|\cos(\theta)|^\frac{-1-\tau+s}{2}|\sin(\theta)|^\frac{-1-\tau-s}{2}\mathrm{ sign}(\theta)^{\epsilon+\eps} \mathrm d\theta\  ,
\end{equation} for any smooth vector $v\in V_\tau\simeq C^\8_{\mathrm{even}}(S^1)$. Here $\mathrm{ sign}(\theta)$ denotes the {\it even} function on $S^1$ taking values $1$ on $[0,\pi/2)$ and $-1$ on $[\pi/2,\pi)$ (i.e., it is odd with respect to the action of $\dl$).

We can easily evaluate the value of $d_{s,\epsilon,\tau}(e_0)$ since this is one of the classical integrals (see \cite{Ma}). We obtain
\begin{eqnarray}\label{d-chi-e-0}
d_{s,\epsilon;\tau,\eps}(e_0)&=&\dl_{\epsilon\eps}\cdot\pi\inv\int\limits_{ \br}(1+x^2)^\frac{\tau-1}{2}|x|^\frac{-1-\tau+s}{2} \mathrm dx\\ &=&\dl_{\epsilon\eps} 2\pi\inv\cdot \frac{\G\left(\frac{1-\tau+s}{4}\right) \G\left(\frac{1-\tau-s}{4}\right)}{\G\left(\frac{1-\tau}{2}\right) }\  ,\nonumber
\end{eqnarray} i.e., $d_{s,\epsilon;\tau,\eps}(e_0)=0$ if $\epsilon+\eps\not\equiv 0 \ (\mathrm{ mod}\ 2)$, and in fact is non-zero otherwise. In particular, from the classical asymptotic for the  $\G$-function, we see that for any {\it fixed} constant $0<\omega<1$,   the bound
\begin{equation}\label{d-chi-e-0-lower-bound}
|d_{s,\epsilon;\tau,\eps}(e_0)|^2\geq c\dl_{\epsilon\eps}(1+|\tau|)\inv
\end{equation} holds for  $|s|\leq \omega |\tau|$, $s\in i\br$, with some explicit constant $c=c_\omega>0$ depending on $\omega$ (we assume that $|\tau|\geq 1$).

\begin{remark}\label{resonance} Note that in the resonance range when $|s|-|\tau|$ is small  compared to $|\tau|$, the value of $|d_{s,\epsilon;\tau,\eps}(e_0)|^2$ jumps. This poses a serious problem for an {\it upper} bound for a norm of a geodesic restriction (see \cite{R}). For a {\it lower} bound, however, this complication could be treated with the help of representation theory as we show below. 
\end{remark}

We  evaluate model functionals on one more vector. Consider $e_0'(x)=\frac{\mathrm d}{\mathrm dx}e_0(x)$. We have  $e'_0=\pi(\mathfrak{n})e_0$ for the standard (w.r.t. $A$) nilpotent element  $\mathfrak{n}=\left(
                                          \begin{smallmatrix}
0  &1 \\
       0                            &0 \\
\end{smallmatrix}
                                        \right)$ in the Lie algebra of $\bG$. 	We have (see \cite{Ma})
\begin{eqnarray}\label{d-chi-e-0-prime}\nonumber
d_{s,\epsilon;\tau,\eps}(e'_0)&=&(1-\dl_{\epsilon\eps})\cdot\pi\inv(\tau-1)\int\limits_{ \br}x(1+x^2)^\frac{\tau-3}{2}\mathrm{ sign}(x)|x|^\frac{-1-\tau+s}{2} \mathrm dx \\ &=&(1-\dl_{\epsilon\eps})2\pi\inv(\tau-1)\cdot \frac{\G\left(\frac{3-\tau+s}{4}\right) \G\left(\frac{3-\tau-s}{4}\right)}{\G\left(\frac{3-\tau}{2}\right) }\  .
\end{eqnarray} Hence we have the lower bound
\begin{equation}\label{d-chi-e-0-lower-bound-prime}
|d_{s,\epsilon;\tau,\eps}(e_0')|^2\geq c(1-\dl_{\epsilon\eps})(1+|\tau|)\ ,
\end{equation}  for  $|s|\leq \omega |\tau|$, $s\in i\br$, with some explicit constant $c=c_\omega>0$ depending on $0<\omega<1$.

Note that vectors $e_0$ and $e_0'$ have complementary parity with respect to $\dl$.

Geometrically, the restriction of the automorphic function $\psi_{\pi(\mathfrak{n})e_0}\in C^\8(Z)$ to a point on the orbit $l=g_l A\subset Z$ gives  the normal derivative of the Maass form $\psi=\psi_{e_0}$ along the  geodesic on $\bX$.
In fact, we have
\begin{eqnarray*}
\psi_{\pi(\mathfrak{n})e_0}\left(g_l\left(\begin{smallmatrix}
y^\haf & \\
                         &    y^{-\haf}      \\
\end{smallmatrix} \right)\right)
=\frac{\mathrm d}{\mathrm dx}\psi_{e_0}\left(g_l
\left(\begin{smallmatrix}
y^\haf  &x \\
                               &  y^{-\haf}  \\
\end{smallmatrix}
                                        \right)\right)_{x=0} \ .
\end{eqnarray*}This  is equal to the normal derivative $$\partial_n\psi(z)= \frac{\mathrm d}{\mathrm dx}\psi(g_l(yi+yx))_{x=0}=y\frac{\mathrm d}{\mathrm dt}\psi(g_l(yi+t))_{t=0}$$   evaluated at the point   $z=g_l(yi)\in \ell$ which we view as a point on the upper half plane.

\begin{remark}\label{spec-funct-matrix-coeff} Note that the function $d_{s,\epsilon;\tau,\eps}(\pi_{\tau,\eps}(g)e_0)$ (which is nothing else than a matrix coefficient in the representation $\pi_{\tau,\eps}$) as a function on $\bG$ is left $A$-equivariant and right $K$-invariant (and hence descends to the upper plane $\mathbb{H}$).  This implies that it is proportional to the function  $e_{-\frac{1}{2}+it_{\psi}}^{i\mu_{n}}$ from Section \ref{closed-geo-sect} since both of them are eigenfunctions of the Laplacian on $\mathbb{H}$ with the same eigenvalue. Classically, special functions are normalized by a value at a point (e.g., by $e_{-\frac{1}{2}+it_{\psi}}^{i\mu_{n}}(0)=1$ as in Section \ref{closed-geo-sect}) or by an integral representation (e.g., our choice in \eqref{d-chi-e-0}). Of course one have similarly defined odd special functions  related to $d_{s,\epsilon;\tau,\eps}(\pi_{\tau,\eps}(g)e'_0)$. 

\end{remark}

The central fact from representation theory that we will use is the multiplicity one theorem for  certain equivariant functionals. 
Namely, that for any irreducible representation $V$ of $\bG$ and for any character $\chi$ of $A$, the vector space $\Hom_A (V, \chi)$ is at most one-dimensional (see, for example, \cite{B} for this standard fact). Here the fact that we work with the full diagonal subgroup is important, otherwise the space of $A_+$-equivariant functionals is two-dimensional (naturally splitting into $\dl$-even and $\dl$-odd subspaces). For any character $\chi=\chi_{s,\epsilon}$ of $A$  which is trivial on $\mathrm{ Stab}_A l$, we encountered two equivariant functionals on every automorphic representation: the automorphic functional  $d^{\mathrm{aut}}_{\tilde\psi_\tau, \chi}=d^{\mathrm{aut}}_\chi\big|_{V_{\tilde\psi_\tau}}$ given by the integral along the $A$-orbit, and the model functional $d_{s_n,\epsilon;\tau,\eps}$. The model functional is in fact defined for any $(\tau,\eps)$ and $(s,\epsilon)$. From the multiplicity one theorem, we deduce that there exists a constant $a_{\chi_{s_n,\epsilon}}(\tilde\psi_\tau)\in \bc$ such that  \begin{equation}\label{a-n-coeff-deff}
d^{\mathrm{aut}}_{\tilde\psi_\tau, \chi}(\nu_{\tilde\psi_\tau}(v))=a_{\chi_{s_n,\epsilon}}(\tilde\psi_\tau)\cdot d_{s_n,\epsilon;\tau,\eps}(v) ,
\end{equation} for any vector $v\in V_\tau$. The power of this relation is that the constant $a_{\chi_{s_n,\epsilon}}(\tilde\psi_\tau)$ does not depend on the vector $v$ in the same automorphic representation.  The coefficients $a_{\chi_{s_n,\epsilon}}(\tilde\psi_\tau)$ are closely related to  the  coefficients $\al_n$ appearing classically in the expansion \eqref{z45}. Namely, it follows from Remark \ref{spec-funct-matrix-coeff} that 
\begin{equation}\label{a-n-al-n}
\al_n c_n(0)=a_{\chi_{s_n,\epsilon}}(\tilde\psi_\tau)\cdot d_{s_n,\epsilon;\tau,\eps}(e_0)\ ,
\end{equation} and hence these coefficients are essentially the same (i.e., only depend on the choice of normalization of the corresponding special functions). Under the choice we made for $d_{s_n,\epsilon;\tau,\eps}$, the coefficients $a_{\chi_{s_n,\epsilon}}(\tilde\psi_\tau)$ satisfy the bound
\begin{equation}\label{a999} \sum_{|s_n|\leq T}|a_{\chi_{s_n,\epsilon}}(\tilde\psi_\tau)|^2\ll_{\bX,l}\max (|\tau|, T)\ ,   \end{equation} for any $T\geq 1$ (see \cite{R}, \cite{R5}).

The Fourier series expansion on $l\simeq S^1$ implies, via the orthogonality of functions  $\chi_{s_n,\epsilon}^\cdot$ in $L^2(l,\mathrm dl)$, the following expansion for the norm of the restriction to $l$:
\begin{equation}\label{expansion-on-l}
\|\nu_{\tilde\psi_\tau}(v)\|^2_{L^2(l)}=\mathrm{ length}(l)\cdot \sum_{\chi_{s_n,\eps}\in \widehat{A/A_l}}|a_{\chi_{s_n,\eps}}(\tilde\psi_\tau)|^2 |d_{s,\epsilon;\tau,\eps}(v)|^2\ ,
\end{equation} for any vector $v\in V_{\tau,\eps}$.

Our aim is to prove the following lower bound for  coefficients $a_{\chi_{s_n,\eps}}(\tilde\psi_\tau)$.
\begin{theorem}\label{thm-a-lower} Assume that QUE holds for a sequence $\{\psi_{\tau_i}\}$ of Maass forms on $\bX$ (e.g., in in the form \eqref{QUE}). There exists an explicit constant $\omega\in(0,1)$ depending on $\bX$ and $\ell$, but not on $\psi_{\tau_i}$, such that the following bound holds:
\begin{equation}\label{a-n-lower-bound}
\sum_{|s_n|\leq \omega|\tau_i|,\ \epsilon}|a_{\chi_{s_n,\eps}}(\tilde\psi_{\tau_i})|^2\geq c' |\tau_i|\ ,
\end{equation} for some constant $c'>0$ and all $i$.
\end{theorem}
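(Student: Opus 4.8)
The plan is to pair the spectral expansion \eqref{expansion-on-l} against the cotangent–bundle QUE \eqref{QUE}, after an averaging over $\bG$ that turns a geodesic restriction into a genuine volume integral. Fix a nonnegative $h\in C_c^\8(\bG)$ with $\int_\bG h>0$ and $\eta_0:=\operatorname{diam}(\supp h)$ small. Since $\nu_{\tilde\psi_\tau}$ intertwines the $\bG$–actions, $\psi_{\pi_\tau(g)v}(t)=\psi_v(tg)$ for every vector $v$ and every $t\in Z$; unfolding the $\bG$–integral along the closed $A$–orbit $l\subset Z$ gives
\[
\int_\bG\big\|\psi_{\pi_\tau(g)v}\big\|_{L^2(l)}^2\,h(g)\,dg=\int_Z|\psi_v|^2\,\Phi_h\,d\mu ,
\]
where $\Phi_h(x):=\int_{t\in l}\sum_{\g\in\G}h(\tilde t^{-1}\g x)\,dl(t)$ is a fixed (i.e.\ $\tau$–independent) smooth function on $Z$, $\Phi_h\ge0$, supported in an $\eta_0$–neighbourhood of $l$, with $\int_Z\Phi_h\,d\mu=\operatorname{length}(l)\int_\bG h>0$. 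Expanding the left side by \eqref{expansion-on-l},
\[
\operatorname{length}(l)\sum_{\chi_{s_n,\epsilon}\in\widehat{A/A_l}}\big|a_{\chi_{s_n,\epsilon}}(\tilde\psi_\tau)\big|^2\,w_n(v)=\int_Z|\psi_v|^2\Phi_h\,d\mu ,
\]
where $w_n(v):=\int_\bG|d_{s_n,\epsilon;\tau,\eps}(\pi_\tau(g)v)|^2h(g)\,dg\ge0$ is purely model–theoretic (independent of $\tilde\psi_\tau$). The strategy is: (1) choose $v$ so the right side is $\gg1$ by QUE; (2) show $w_n(v)\ll|\tau|\inv$ for $|s_n|\le\omega|\tau|$; (3) show $w_n(v)$ decays super–polynomially, \emph{without loss in $|\tau|$}, for $|s_n|>\omega|\tau|$, so that \eqref{a999} kills that range; then $|\tau|\inv\sum_{|s_n|\le\omega|\tau|,\epsilon}|a_{\chi_{s_n,\epsilon}}|^2\gg1$, which is \eqref{a-n-lower-bound}.

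For (1), take $v=v_{\theta_0}\in V_{\tau,\eps}$ to be, in the circle model $V_{\tau,\eps}\simeq C^\8_{\mathrm{even}}(S^1)$, a fixed $L^2$–unit ``coherent state'': a smooth bump of width $\sim\eta_0$ supported near $\theta_0$ and $\theta_0+\pi$ with $\theta_0=\pi/4$ (any $\theta_0\notin\{0,\pi/2,\pi,3\pi/2\}$ works). Since $v_{\theta_0}$ is a fixed smooth vector, Zelditch's passage from $\psi_\tau$ to $\psi_{v_{\theta_0}}$ together with \eqref{QUE} gives $\int_Z|\psi_{v_{\theta_0}}|^2\Phi_h\,d\mu\to\frac{1}{\operatorname{vol}(Z)}\int_Z\Phi_h\,d\mu>0$ as $|\tau|\to\8$, so the displayed quantity is $\gg1$ for $|\tau|$ large. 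It is essential that $\Phi_h$ is a test function on all of $\G\backslash\bG$ (not $K$–invariant), so that only the microlocal, cotangent–bundle form of QUE applies — mere $L^2$–QUE would not suffice.

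For (2)–(3) one analyses $w_n(v_{\theta_0})$ by (non)stationary phase in \eqref{d-chi-circle}. Writing $\tau=it$, $s_n=i\sigma_n$, the $\theta$–phase is $P_{s_n}(\theta)=\tfrac{\sigma_n-t}{2}\log|\cos\theta|-\tfrac{\sigma_n+t}{2}\log|\sin\theta|$, with $|P_{s_n}'(\theta_1)|=|\sigma_n+t\cos 2\theta_1|/|\sin 2\theta_1|$ at $\theta_1$ near $\pi/4$ and $|P_{s_n}''|\asymp|\tau|$ on a small neighbourhood of $\pi/4$; for $|\sigma_n|<|t|$ the unique stationary point $\theta^*(\sigma_n)\in(0,\pi/2)$ satisfies $\cos 2\theta^*=-\sigma_n/t$. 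For $g\in\supp h$, $\pi_\tau(g)v_{\theta_0}$ is again a width–$\sim\eta_0$ bump near some $\theta_0(g)$ with $|\theta_0(g)-\pi/4|\ll\eta_0$, times a unimodular factor whose phase has derivative $O(|\tau|\eta_0)$. Hence: (3) for $|s_n|>\omega|\tau|$ with $\omega$ a suitable fixed multiple of $\eta_0$, the total phase is non–stationary on $\supp(\pi_\tau(g)v_{\theta_0})$ with $|P'|\gtrsim|s_n|$ and $|P^{(j)}|\ll_{\eta_0}|s_n|$ for $j\ge2$, so repeated integration by parts (the amplitude $|\cos\theta|^{-\frac12}|\sin\theta|^{-\frac12}v_{\theta_0}$ being $C^\infty$ with $C^j$–norms $O_j(1)$ on a support avoiding $\theta\in\{0,\pi/2\}$) yields $w_n(v_{\theta_0})\ll_{N,\eta_0}|s_n|^{-N}$ for every $N$ — in particular through the resonance range $|s_n|\approx|\tau|$, precisely because there the mass of $d_{s_n,\epsilon;\tau,\eps}$ concentrates near $\theta\in\{0,\pi/2\}$, where $v_{\theta_0}$ is negligible; and (2) for $|s_n|\le\omega|\tau|$, where stationary phase contributes, $|d_{s_n,\epsilon;\tau,\eps}(\pi_\tau(g)v_{\theta_0})|^2\ll|\tau|\inv$, while $g\mapsto|d_{s_n,\epsilon;\tau,\eps}(\pi_\tau(g)v_{\theta_0})|^2$ is supported (up to super–polynomially small error) in the set of $g\in\supp h$ whose induced stationary point lands in $\supp v_{\theta_0}$, of measure $\ll\eta_0\,|\supp h|$, whence $w_n(v_{\theta_0})\ll|\tau|\inv$. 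Feeding (3) into the dyadic form $\sum_{|s_n|\le T}|a_{\chi_{s_n,\epsilon}}|^2\ll\max(|\tau|,T)$ of \eqref{a999} gives $\sum_{|s_n|>\omega|\tau|,\epsilon}|a_{\chi_{s_n,\epsilon}}|^2w_n(v_{\theta_0})\ll_N|\tau|^{1-N}=o(1)$; combined with (2) and step (1),
\[
|\tau|\inv\!\!\sum_{|s_n|\le\omega|\tau|,\ \epsilon}\big|a_{\chi_{s_n,\epsilon}}(\tilde\psi_\tau)\big|^2\ \gg\ 1\qquad(|\tau|\text{ large}),
\]
which is \eqref{a-n-lower-bound} with this $\omega$ and a suitable $c'=c'(\bX,\ell)>0$ (the finitely many small $|\tau_i|$ being harmless).

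The step I expect to be the real obstacle is the two–sided control of the model weights $w_n(v_{\theta_0})$ in (2)–(3): the upper bound $w_n(v_{\theta_0})\ll|\tau|\inv$ uniformly over the whole window $|s_n|\le\omega|\tau|$ (the measure estimate for the set of $g$ whose stationary point hits $\supp v_{\theta_0}$ is elementary but fiddly), and above all the $|\tau|$–free super–polynomial decay for $|s_n|>\omega|\tau|$ — it is exactly here that one exploits that $v_{\theta_0}$ is microlocalized away from the singular directions $\{0,\pi/2,\dots\}$ of the functionals $d_{s_n,\epsilon;\tau,\eps}$, which tames the resonance range without any hypothesis on the coefficients $a_{\chi_{s_n,\epsilon}}$.
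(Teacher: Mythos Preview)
Your proposal is correct and follows essentially the same route as the paper: fatten the closed $A$-orbit by a small neighborhood $U\subset\bG$, choose the test vector to be a fixed smooth bump on $S^1$ supported near $\pi/4$ (away from the singular points $0,\pi/2$ of the kernel in \eqref{d-chi-circle}), invoke QUE on the cotangent bundle for the lower bound, and use (non)stationary phase on \eqref{d-chi-circle} for the spectral upper bounds, disposing of the tail via \eqref{a999}. Two minor remarks: the ``measure estimate'' in your step~(2) is superfluous, since the pointwise bound $|d_{s_n,\epsilon;\tau,\eps}(\pi_\tau(g)v_{\theta_0})|^2\ll|\tau|^{-1}$ already integrates to $w_n\ll|\tau|^{-1}$; and your appeal to ``Zelditch's passage'' in step~(1) is exactly what the paper supplies explicitly via a $K$-finite approximation of $v_{\theta_0}$.
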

We prove this bound by choosing an appropriate test vector $v$ in the expansion \eqref{expansion-on-l}, and estimating the left hand side  from below by appealing to QUE.

\subsubsection{Proof of Theorem \ref{thm-geodesic-low-bound}} We have
\begin{equation*}
\int\limits_l\left(|\psi_\tau|^2+\lm\inv\cdot|\partial_n\psi_\tau|^2\right)\mathrm dl =\|\nu_{\tilde\psi_\tau}(e_0)\|^2_{L^2(l)} + \lm\inv \|\nu_{\tilde\psi_\tau}(e'_0)\|^2_{L^2(l)} .
\end{equation*}

Using the expansion \eqref{expansion-on-l} and lower bounds \eqref{d-chi-e-0-lower-bound} and \eqref{d-chi-e-0-lower-bound-prime} for values of $d_{s_n,\epsilon;\tau,\eps}(e_0)$ and of $d_{s_n,\epsilon;\tau,\eps}(e'_0)$, we deduce the bound \eqref{phi-lower-bound} in Theorem \ref{thm-geodesic-low-bound} from the lower bound \eqref{a-n-lower-bound}.\qed

\subsection{Fattening the geodesic}

\subsubsection{Bound on geodesic coefficients}\label{proof-lower-bound-cl-geodesic} The idea of the proof for the bound \eqref{a-n-lower-bound} is that for a given $\omega\in(0,1)$, we need to construct a test vector $v=v_\omega\in V_\tau=V_{\tau,\eps}$  (i.e., a function $v\in C^\8_{\mathrm{even}}(S^1)$ of norm one), and a small {\it fixed} neighborhood $U=U_\omega\subset \bG^+$ (to be specified later)  such that for any $g\in U$ and some explicit constant $\al>0$, the following spectral bounds hold:
\begin{equation}\label{v-regular-range}
|d_{s,\epsilon;\tau,\eps}(\pi_{\tau,\eps}(g)v)|^2\leq\al|\tau|\inv\ ,
\end{equation} for any  $|s|\leq \omega|\tau|$ and any $\epsilon$, $\eps$,
and
\begin{equation}\label{v-cutoff}
|d_{s,\epsilon;\tau,\eps}(\pi_{\tau,\eps}(g)v)|^2\leq\al|s|^{-3}\ ,
\end{equation} for any  $|s|\geq \omega|\tau|$ and any $\epsilon$, $\eps$ (in fact, for the vector we construct, the bound \eqref{v-regular-range} is sharp, possibly in a shorter range of $s$). We will then show that for  the  constructed vector $v$,  QUE implies the lower bound
\begin{equation}\label{v-lower-bound-on-Z}
\int\limits_{x\in l\cdot U}|\psi_v(x)|^2f(x)\mathrm d\mu \geq b\ ,
\end{equation} for some  constant  $b>0$ which is independent of $\psi_\tau$. Here $f$ is a non-negative function on $Z$ constructed in the following way: we first choose a smooth non-negative function $f_U\in C^\8(\bG)$ with $\mathrm{ supp}(f_U)\subset U$ and satisfying $\int_\bG f_U\mathrm dg=1$, and then consider $f=\dl_l\ast f_U$ (i.e., the average of the delta function on $l$ under the right action by $f_U$ -- this being the ``fattening'' of the geodesic).

Assuming  bounds \eqref{v-regular-range}, \eqref{v-cutoff} and \eqref{v-lower-bound-on-Z}, we have
\begin{eqnarray*}b&\leq&\int\limits_{x\in l\cdot U}|\psi_v(x)|^2f(x)\ \mathrm d\mu\\ &=&\int\limits_{g\in U}\int\limits_l|\psi_v(lg)|^2f_U(g)\ \mathrm dl\mathrm dg=\int\limits_{g\in U}\int\limits_{t\in l}|\psi_{\pi_{\tau,\eps}(g)v}(t)|^2f_U(g)\ \mathrm dt\mathrm dg\\
&=& \int\limits_{g\in U}\left[\sum_{\chi_{s_n,\epsilon}\in \widehat{A/A_l}}|a_{\chi_{s_n,\epsilon}}(\tilde\psi_\tau)|^2|
d_{s_n,\epsilon;\tau,\eps}(\pi_{\tau,\eps}(g)v)|^2\right]f_U(g)\ \mathrm dg\\
&=&\sum_{|s_n|\leq \omega|\tau|}|a_{\chi_{s_n,\epsilon}}(\tilde\psi_\tau)|^2\cdot
\int\limits_U|d_{s_n,\epsilon;\tau,\eps}(\pi_{\tau,\eps}(g)v)|^2f_U(g)\ \mathrm dg\\
&&+\sum_{|s_n|\geq \omega|\tau|}|a_{\chi_{s_n,\epsilon}}(\tilde\psi_\tau)|^2\cdot
\int\limits_U|d_{s_n,\epsilon;\tau,\eps}(\pi_{\tau,\eps}(g)v)|^2f_U(g)\ \mathrm dg\\
&\leq&
\al|\tau|\inv\sum_{|s_n|\leq \omega|\tau|}|a_{\chi_{s_n,\epsilon}}(\tilde\psi_\tau)|^2 +\al\sum_{|s_n|\geq \omega|\tau|}|s_n|^{-3}|a_{\chi_{s_n,\epsilon}}(\tilde\psi_\tau)|^2 \ .
\end{eqnarray*}
The second sum is easily bounded above  by $|\tau|^{-2}$ using the geometric bound \eqref{a999}; in fact, this bound is also proved by the ``fattening'' argument (see \cite{BR1}, \cite{R}, \cite{R5}). Hence we deduce  the bound \eqref{a-n-lower-bound} from the bounds \eqref{v-regular-range}, \eqref{v-cutoff} and \eqref{v-lower-bound-on-Z}.

\subsubsection{Test vectors}\label{test-vect-v}\  We now construct the test vector satisfying the spectral bounds \eqref{v-regular-range} and \eqref{v-cutoff}. Let $v\in C^\8_{\mathrm{even}}(S^1)$ be a fixed even smooth function with small support around the points $\pi/4$ and $-3\pi/4$ (e.g., supported in the interval of size $0.01\cdot\omega$ around these points). We normalize it by  the $L^2$-norm. Note that while the function $v$ is fixed, we view it as a family of vectors in changing representations  $V_\tau$ (i.e., the action of $\PGLR$ in each of these spaces is different). We then define  the open neighborhood  $U\subset \bG$ of the identity by the condition $\|g-Id\|_{\bG}\leq 0.001\cdot\omega$ on the matrix norm (e.g., with respect to the adjoint action of $\bG$ on its Lie algebra). It is easy to see that this condition implies that the support of all vectors $\pi_\tau(g)v$ is contained in the interval of the size $0.02\cdot\omega$ around the points $\pi/4$ and $-3\pi/4$. Moreover, we have the bound $|g'(\theta)|\leq 1.1$ for the derivative on these intervals.

We consider the integral \eqref{d-chi-circle} defining $d_{s,\epsilon;\tau,\eps}(\pi_\tau(g)v)$. Recall that  for $g\in \bG^+$, the action is given by $\pi_{\tau,\eps}(g)v(\theta)=v(g\inv(\theta))|g'(\theta)|^\frac{\tau-1}{2}$ in the circle realization of $V_{\tau,\eps}$. We have
\begin{multline}\label{value-d-on-v}
d_{s,\epsilon;\tau,\eps}(\pi_{\tau,\eps}(g)v)= \cr  (2\pi)\inv\int\limits_{ S^1}v(g\inv(\theta))|g'(\theta)|^\frac{\tau-1}{2}|\cos(\theta)|^\frac{-1-\tau+s}{2}|\sin(\theta)|^\frac{-1-\tau-s}{2} \mathrm{ sign}(\theta)^{\epsilon+\eps} \mathrm d\theta\ .
\end{multline}

We view this integral as an oscillatory integral in the parameter $|\tau|\to\8$, and apply to it the stationary phase method. The main feature of the vectors $v$ that we are going to exploit is that the support of $\pi_\tau(g)v$ is disjoint from singularities of the {\it amplitude}  function in the oscillatory integral \eqref{d-chi-circle} at points $0,\ \pi$ and $\pm\pi/2$.
It is easy to see that in this oscillatory integral the phase function  has at most one (up to the central symmetry)  non-degenerate  critical point on the support of $v(g\inv(\theta))$ if $|s|\leq 0.5\cdot\omega|\tau|$. The amplitude function in \eqref{value-d-on-v} is always smooth, and hence the stationary phase method implies the bound \eqref{v-regular-range} in this range. On the other hand, for $|s|\geq 0.5\cdot\omega|\tau|$, the phase function does not have critical points on the support of $v(g\inv(\theta))$ and integration by parts implies the bound \eqref{v-cutoff}.
 Hence bounds \eqref{v-regular-range} and \eqref{v-cutoff} are satisfied for our choice of vector $v$.

 We remark that the resonance case in the integral \eqref{value-d-on-v} appears when the stationary point of the phase is close to the singularity of the amplitude. By choosing the function $v$ to have small enough support, we have eliminated such a possibility.

\subsection{Proof of bound \eqref{v-lower-bound-on-Z}}\ 
We now deduce the lower bound \eqref{v-lower-bound-on-Z} from QUE on the unit cotangent bundle $S^1(\bX)\simeq Z_+$ of the compact surface $\bX$.
\subsubsection{QUE}\label{QUE-sect}\  We use the following form of QUE. For a {\it fixed} even integer $m$, we consider a norm one Maass form $\psi^m_\tau$ of weight $m$ which is obtained from the Maass form $\psi_\tau=\psi^0_\tau$ by Maass lowering/raising  operators. In fact, $\psi^m_\tau$ is the unique, up to a multiple constant, vector of $K$-type  $m$ in the representation $V_{\psi_\tau}$. The QUE property is then translated into the decay of matrix coefficients (see \cite{Z1}, \cite{Z2}). Namely, we assume that for any smooth  function $\Psi\in C^\8(Z_+)$ with mean zero, we have
\begin{equation}\label{QUE} \langle \Psi\psi^m_\tau,\psi^n_\tau\rangle_{L^2(Z_+)}\to 0
\end{equation} as $|\tau|\to\8$ and $m,\ n $ are fixed. Note that we do not assume any dependence on $m$ and $n$ of the rate of the decay in \eqref{QUE}.  We note that usually QUE is formulated geometrically on the manifold $Z_+$ while we will to work on $Z$. It is enough to assume the decay of matrix coefficients of the form $\langle \Psi\psi^m_\tau,\psi^0_\tau\rangle_{L^2(Z_+)}$ since one can shift indexes using Maass operators (or even only the decay of the coefficients $\langle \Psi\psi^0_\tau,\psi^0_\tau\rangle_{L^2(Z_+)}$; however in this case the proof is more complicated and uses trilinear functionals as in \cite{Re3}).

\subsubsection{Approximation} We consider an approximation of the test vector $v$ by a $K$-finite combination. We denote by  $t\in(0,1)$ a real parameter whose value  we will  specify later.  We choose a natural number $N\geq 0$ and coefficients $\al_i\in\bc$ such that the vector $u_t=v-\sum_{i=-N}^N\al_i e_i$ satisfies    $\|u_t\|\leq t \|v\|$. Let $v_t=\sum_{i=-N}^N\al_i e_i$ be the {\it finite} combination of $K$-types. 
Let $f\in C^\8(Z)$ be as in \eqref{v-lower-bound-on-Z}. 
We have 
\begin{equation}\label{aprox} \langle |\psi_v|^2,f\rangle_{L^2(Z)}=\langle |v_t+u_\eps|^2,f\rangle= \langle |v_t|^2,f\rangle+\langle 2(u_\eps\bar v_t+\bar u_t v_t),\psi\rangle+\langle |u_t|^2,f\rangle\ .
\end{equation}

From QUE \eqref{QUE} we have 
\begin{enumerate}
	\item $\langle |v_t|^2,f\rangle\to\int\limits_Zf\cdot \|v_t\|^2>0$ as $|\tau|\to\8$, 
	\item $|\langle 2(u_t\bar v_t+\bar u_t v_t),f\rangle|\leq 2\sup|f|\cdot \|v_t\|\cdot\|u_t\|=2t\sup|f|\cdot \|v_t\|^2,$
	\item $|\langle |u_t|^2,f\rangle|\leq 2\sup|f|\cdot \|u_t\|^2=t^2\sup|f|\cdot \|v_t\|^2.$
\end{enumerate}

Hence by choosing $t$  sufficiently small depending on $f$, but not on $\tau$ (e.g., $t\leq (10\sup|f|)\inv\cdot \int f$)  we can make  the first term on the right in \eqref{aprox} to dominate other terms. This proves there is $b>0$ depending on $f$ only, such that   $\int_{x\in l\cdot U}|\psi_v(x)|^2f(x)\mathrm d\mu \geq b$. This finishes the proof of bound \eqref{v-lower-bound-on-Z}.

\section{Geodesic period, $L$-functions and Waldspurger's theorem}\label{waldspurger-sect}\ 
The proof of the lower bound 
\[
n(\psi_{\lambda}) \gg_{_ \eps} \lambda_{\psi}^{\frac{1}{27}- \eps}\ ,
\]
for the number of sign changes of a Hecke-Maass form on a closed geodesic (from which the Theorem 
\ref{nodal-domains-thm-intro} follows; see  Section \ref{nodal-count-sect}) goes exactly along the lines explained in \cite{GRS1},  p. 1558, for the case of a split geodesic (i.e., connecting two cusps).  There are two main inputs (beyond the restriction Theorem \ref{closed-g-intro}) in the proof of the lower bound for $n(\psi_{\lambda})$: a non-trivial bound on $L^\8$-norm of an eigenfunction and a (conjectural) bound on Fourier coefficients  of eigenfunctions along closed geodesics.  In the present setup of general co-compact arithmetic Riemann surfaces the corresponding $L^{\infty}$ bound of $\lambda^{\frac{1}{4}-\frac{1}{27}}$ is proved  by a straightforward adaption of \cite{BM}. This explains Remark \ref{sup-norm-remark}(b). 
To treat Fourier coefficients along split geodesic, in \cite{GRS1} we appealed to  the Lindel\"{o}f conjecture for the Hecke $L$-function $L(s,\psi)$. For the present situation of a {\it closed} geodesic the analogous assumption concerns the size of coefficients $\al_n(\psi_\lambda)$ coming from the expansion \eqref{z45} (or what is the same the representation theoretic coefficients $a_{\chi_{s_n,\epsilon}}(\tilde\psi_\tau)$ from \eqref{a-n-coeff-deff}). The crucial assumption we use is that these satisfy the  bound 
\begin{equation}\label{Lindelof-g-periods}
|a_{\chi_{s_n,\epsilon}}(\tilde\psi_\tau)| \ll_{_ \eps} \lambda_{\psi}^{\eps}\ ,\end{equation} for any $\eps>0$.

We claim that such a bound again would follow from  the Lindel\"{o}f conjecture for the appropriate $L$-functions.

In this section we discuss the relation between geodesic periods  of Hecke-Maass forms (i.e., coefficients $a_{\chi_{s_n,\epsilon}}(\tilde\psi_\tau)$) and certain automorphic $L$-functions. This  is based on the celebrated theorem of Waldspurger \cite{Wa}. We use this relation in order to justify our assumption \eqref{Lindelof-g-periods} on the size of geodesic periods of Hecke-Maass forms (see also \cite{R5}) by showing that it again follows from the Lindel\"{o}f conjecture for the corresponding $L$-functions.

\subsection{Waldspurger's formula} We start with the adelic formulation of the Waldspurger's formula (in order to quote \cite{Wa}), and then translate it into the classical language. Let $D$ be a quaternion algebra over $\bq$ which we assume is split over reals. It is well-known that the group of integer units of $D$ gives rise to a co-compact Fuchsian group. In fact, to cover all arithmetic Fuchsian  groups, we have to consider quaternion algebras $D$ over a number field $k$. One has to assume that $k$ is totally real and that $D$ is split over exactly one archimedian completion of $k$. Up to comensurability, all arithmetic Fuchsian  groups are obtained from such quaternion algebras (see \cite{Ta75}). The Waldspurger theorem holds over a general number field. In the exposition  of the Waldsurger theorem below we will assume, for simplicity, that $k=\bq$. We indicate small changes needed in order to cover the general case (see Remark \ref{num-field}). All the explicit examples in Section \ref{sect-examples} come from quaternion algebras defined over $\bq$.   

The group $G_1=D^\times/\bq^\times$ could be viewed as a special orthogonal group of a $3$-dimensional quadratic  space $W$ over $\bq$ (i.e., preserving a rational quadratic form). Let $T$ be an anisotropic torus of $G_1$. Then $T$ could be considered as a special orthogonal group of a rational $2$-dimensional plane in $W$. Any such torus corresponds to a quadratic field $E$ (the splitting field of $T$), and we have the map $1\to \bq^\times\to E^\times\to T\to 1$. We consider (rational) adelic points of $G_1$ and $T$. A (unitary) character $\om$ of $T(\ba)/T(\bq)$ could be considered as a (Hecke) character of $\ba^\times_E/ E^\times$ trivial on $\ba^\times_\bq/ \bq^\times$. Let $v$ be a finite or infinite valuation of $\bq$ and let $\bq_v$ be the corresponding local field (i.e., $\bq_v$ is equal to $\bq_p$ or $\br$). We  fix a Haar measure on each local component $T_v=T(\bq_v)$ as we will explain below (see Section \ref{local-int-sect}). The Tamagawa measure $\mathrm d_\ba t$ on $T(\ba)$ then satisfies $\mathrm d_\ba t=C_T\prod_v\mathrm dt_v$ for some positive constant $C_T>0$ to be described later.  

Let $\pi$ be an automorphic cuspidal representation of $G_1(\ba)$. It could be considered as a representation of $D^\times (\ba)$ with a trivial central character. Let $\Pi$ be its base change to $\GL_2(\ba_E)$. Choose a non-zero cusp form $\psi=\otimes_{v}\psi_v\in \pi\simeq\otimes_v\pi_v$.  Then Proposition 7 of \cite{Wa}  states that for an appropriate  finite set $S$ of (ramified) primes depending on $G_1$, $T$, $\om$ and $\psi$, the following relation holds:
\begin{equation}\label{Walds1}
\frac{\left|\int\limits_{T(\ba)/T(\bq)}\psi(t)\om(t)\ \mathrm d_\ba t\right|^2}{\langle\psi,\psi\rangle }=  C(G_1,T)\frac{L^S(\Pi\otimes \om, 1/2)}{L^S(\pi, Ad, 1)}\times \prod_{v\in S}\frac{I_v(\psi_v,\om_v)}{\|\psi_v\|_v^2}\ .
\end{equation}

Here $L^S(\Pi\otimes \om, s)$ is the partial  Hecke-Jacquet-Langlands  $L$-function on $\GL_2$, $L^S(\pi, Ad, s)$ is the partial adjoint $L$-function, $C(G_1,T)$ is an explicit constant depending on the group and the torus only, and  the quantities $I_v(\psi_v,\om_v)$ are given by an explicit local integral described below (see \eqref{Walds2-local-int}). 

An important point (for us) is that the set of ramified primes $S$ is {\it fixed} in our problem. Indeed, we are interested in a set of Hecke-Maass forms of a {\it fixed} level (i.e., on a fixed Riemann surface), and hence  representations $\pi$ we consider have  bounded ramification, and we only consider vectors in these representations of a bounded ramification. This allows us to split the problem of bounding the right hand side in \eqref{Walds1} into two separate problems: the (difficult) global problem of bounding $L$-functions and the (easy) problem of bounding local integrals.  

\subsection{Geodesic periods}\ Our aim is to show that bounds on $L$-functions appearing in \eqref{Walds1} imply  bounds for  periods along closed geodesics of the corresponding Hecke-Maass form. In particular, we will show that the Lindel\"{o}f conjecture for relevant $L$-functions is consistent with the bound \eqref{Lindelof-g-periods}. We first note that in the classical language of geodesic periods of a Hecke-Maass form $\psi$ one assumes that the level $N$ of $\psi$ is fixed, and hence the Riemann surface is of the form $Y_N=G_1(\bq)\backslash G_1(\ba)/K_\8K(N)$ for some appropriate open compact subgroup $K(N)\subset G_1(\ba)^f$ of the group of finite adeles of $G_1$. This means that outside of some finite set $S$ of primes (i.e., those dividing $N$)  the vector $\psi_v$ in the decomposition  $\psi=\otimes_{v}\psi_v$ is the standard $K_v$-fixed vector. Consider the set $\Sigma_E(N)=T(\bq)\backslash T(\ba)/T(\ba)\cap K(N)\subset Y_N$ of closed geodesics on the (congruence) Riemann surface $Y_N$ corresponding to the quadratic field $E$ (here we assume that $E$ is a real quadratic field and hence the set $\Sigma_E(N)$ is a union of circles where the number of circles is equal to the appropriate class number). In fact, the set $\Sigma_E(N)$ has  the natural structure of the abelian group of the form $S^1\times Cl$ where $Cl$ is a finite group (which coincides with an appropriate class group). We are interested in the restriction $\psi|_{\Sigma_E(N)}$, or more precisely in the restriction to every connected component of $\Sigma_E(N)$. Any (smooth) function on $\Sigma_E(N)$ can be decomposed into the Fourier series with respect to characters of $T(\ba)$. Clearly we need to consider only Hecke characters (i.e., characters trivial on $T(\bq)\simeq E^\times/\bq^\times$) which are  unramified  outside $S$. Moreover, at ramified primes the ramification is also bounded by that of $\psi$ (i.e., by $N$). The set of such characters is a discrete group $\bz\times \widehat{Cl}$. We consider bounds on (partial) $L$-functions in term of their analytic conductor (see \cite{IS-GAFA}). The analytic conductor  of $L(\Pi\otimes \om, s)$ depends on the infinity type of $\Pi_\8\otimes \om_\8$ and on the ramification of $\Pi\otimes \om$. The latter however is bounded in the problem we are interested in as
explained above. Hence the analytic conductor of  $L(\Pi\otimes \om, s)$ for the set of characters we are dealing with essentially depends only on the infinity type of $\pi$ and $\om$. In order to use the Waldspurger
formula \eqref{Walds1} for bounds on periods of $\psi$, we need to have a bound on $L$-functions appearing on the right in \eqref{Walds1} and a bound on the local integrals $I_v(\psi_v,\om_v)$. There is a very strong bound $ L(\pi, Ad, 1)\gg_{\eps} C(\pi)^{-\eps}$ for any $\eps>0$, in terms of  the analytic conductor $C(\pi)$ of $\pi$. Over $\bq$, this bound  is  due to Hoffstein and Lockart \cite{HL94} and  uses a corresponding upper bound of Iwaniec \cite{I}. Over a number field, such a bound follows from results on Siegel zeros in \cite{Ba} and \cite{HR} and the upper bound in \cite{Mo}.

While there are subconvexity bounds in all parameters for $L(\Pi\otimes \om, s)$ (see \cite{MiVe}), these are far from enough for our purposes and hence we assume the Lindel\"{o}f conjecture for this $L$-function in order to justify the bound \eqref{Lindelof-g-periods}. We are left to deal with the local integrals $I_v(\psi_v,\om_v)$. 

For the (split) infinite prime, the representation $\pi_\8$ is of principal unitary series and the above local integrals  can be evaluated explicitly in terms of the  Gamma function.  It turns out that these coincide with  values of invariant functionals we computed in Section \ref{geod-exp-sect}. Namely, for even characters $\om_\8=\chi_{s,\epsilon}$ as in Section \ref{gr-act-setup}, and the function $\psi_{e_0}$ corresponding to the $K_\8$-fixed vector, this is given by 
$|d_{s,\epsilon;\tau,\eps}(e_0)|^2$ from  \eqref{d-chi-e-0}.   For odd characters and the function $\psi_{e'_0}$ with $e'_0=\pi(\mathfrak{n})e_0$, the corresponding value is given by
$|(\tau-1)\inv d_{s,\epsilon;\tau,\eps}(e'_0)|^2$ from \eqref{d-chi-e-0-prime} (the extra factor $(\tau-1)\inv$ appears because of the $L^2$-normalization).
																				
	Next we claim that local integrals are uniformly bounded  at ramified finite primes.

\subsection{Local integrals}\label{local-int-sect} To describe local integrals at finite primes we first have to fix local measures.  Fix a nontrivial additive character $\psi$ of $\ba/\bq$. We choose a Haar measure $\mathrm dx_v=\zeta_v(1)\inv|x_v|\inv \mathrm dx_v$ on $\bq_v^\times$, where $\mathrm dx_v$ is the self-dual (additive) Haar measure on $\bq_v$ with respect to $\psi_v$.
We choose a Haar measure on $E_v^\times$ in a similar way.  This defines the Haar measure $\mathrm dt_v$ on $T_v$. These measures satisfy the global identity $\mathrm d_\ba t=\Lambda(\chi_{E/\bq},1)\inv \prod_v \mathrm dt_v$, where $\Lambda(\chi_{E/\bq},s)$ is the completed $L$-function of the quadratic character associated to $E$. We have then 

\begin{eqnarray}\label{Walds2-local-int}
I_v(\psi_v,\om_v)=  \int\limits_{T_v}\langle \pi_v(t_v)\psi_v,\psi_v\rangle_v\ \om(t)d t_v \ .
\end{eqnarray} It is easy to see that the integral above is absolutely convergent. First, we note that for places $v\in S$ where the algebra $D$ is ramified (i.e., it is not isomorphic to the algebra of matrices $M_2$), the corresponding group of units $D_v^\times$ is compact modulo the center. Hence the integration in \eqref{Walds2-local-int} is over a fixed compact set and the integral $I_v(\psi_v,\om_v)$ is uniformly bounded for all $\pi_v$ and $\psi_v$ (in fact in this case $\pi_v$ is finite-dimensional). For places $v\in S$ where $D$ splits,  the unitary representation $\pi_v$ is non-trivial, and hence the decay of matrix coefficients imply the absolute convergence. However, we need a little bit more in order to conclude that $I_v(\psi_v,\om_v)$ are uniformly bounded as the cuspidal representation $\pi$ changes (i.e, $\pi_\8$ has a growing parameter, but the ramification of $\pi$ is fixed).  Note that $|\om(t)|\equiv 1$ and the measure $d t_v$ is essentially integrable over $T_v$, i.e., any effective polynomial decay of the matrix coefficient $\langle \pi_v(t_v)\psi_v,\psi_v\rangle_v$ would imply the uniform bound for $I_v(\psi_v,\om_v)$ for all $\pi_v$. Such a bound indeed is available thanks to non-trivial bounds towards the Ramanujan-Petersson conjecture. Indeed, we are looking for an effective polynomial decay for matrix coefficients $\langle \pi_v(t_v)\psi_v,\psi_v\rangle_v$ (i.e., a bound of the type $|\langle \pi_v(t_v)\psi_v,\psi_v\rangle_v|\leq C||t_v||^{-\s}$ for some absolute constants $C>0$ and $\s>0$ which are {\it independent} of the representation $\pi_v$ and  a normalized vector $\psi_v$ in it). The crucial point here is that $\pi_v$ are assumed to be local components of a cuspidal representation $\pi$ at a {\it finite} set of places $v\in S$, and vectors $\psi_v$ have some bounded ramification (i.e., belong to a {\it fixed} finite-dimensional space of $K_v$-finite vectors).  For such a family of matrix coefficients the effective polynomial decay holds if representations $\pi_v$ are separated  from the trivial representation, and this is what follows from  any non-trivial bound towards the Ramanujan-Petersson conjecture.

Hence we have shown that for the family of vectors $\psi_v$ coming from Hecke-Maass forms of fixed level, local integrals appearing in \eqref{Walds1} are uniformly bounded. This shows that an upper bound \eqref{Lindelof-g-periods} on geodesic periods of Hecke-Maass forms would  follow from the Lindel\"{o}f conjecture for $L(\Pi\otimes \om, 1/2)$ via the Waldspurger's formula \eqref{Walds1}. We note that other remarkable applications of the Waldspurger's formula \eqref{Walds1} include non-negativity of $L(\Pi\otimes \om, 1/2)$. This however requires much more delicate study of local integrals $I_v(\psi_v,\om_v)$ (including their non-vanishing) which we luckily can avoid. 

\begin{remark}\label{num-field} To cover more general co-compact arithmetic Fuchsian groups one needs to consider quaternion algebras $D$ defined over a totally real field $k$. There is very little change needed in the above argument in order to cover these. The algebra $D$ have to be split at exactly one real place. Hence at all other real places a cuspidal representation $\pi$ corresponding to a Hecke-Maass eigenfunction should be trivial (otherwise it would correspond to a function defined on a Riemann surface times copies of the sphere $S^2$ and would not descend to a Riemann surface alone). For such cuspidal representations $\pi$, the corresponding local integrals $I_v(\psi_v,\om_v)$ are trivial. Also, non-trivial bounds towards  the Ramanujan-Petersson conjecture are well-known over a number field.  Hence the  arguments above we made for $k=\bq$ are easily applicable. 
\end{remark}

\section{Explicit examples}\label{sect-examples}

\subsection{Symmetric fundamental domains: the non-cocompact case}\ 

Let $\sigma$ denote the orientation reversing (reflection) isometry $\sigma(z)= -\overline{z}$, for $z\in\mathbb{H}$. Let $\Gamma$ be a Fuchsian group of the first kind, that is a subgroup of $\PSL(2,\mathbb{R})$ that acts discontinuously on $\mathbb{H}$ with every point on $\mathbb{R}$ being a limit point of the orbit $\Gamma z$ for a $z\in\mathbb{H}$. We shall restrict ourselves to those $\Gamma$ such that $\sigma$ is in the normaliser of $\Gamma$. Every such group has  Dirichlet fundamental domains given by polygons
\[
\mathcal{D}_{w} = \left\{z\in\mathbb{H}\ :\ d(z,w)<d(z,\gamma w)\ \mathrm{ for\ all}\ \gamma\in \Gamma, \gamma\neq I\right\},
\]
where $w\in\mathbb{H}$ is chosen to be a point not fixed by any  $\gamma \in \Gamma$ (except the identity), and $d(\ ,\ )$ denotes the hyperbolic distance. Then, since $\sigma\Gamma\sigma=\Gamma$, if $\Re{w} =0$, one has $\sigma(\mathcal{D}_{w})=\mathcal{D}_{w}$. By choosing $w$ in this manner, one determines that $\Gamma$ always has fundamental domains whose closure are symmetric with respect to the action of $\sigma$. We call such domains (by abuse of language) {\it symmetric} fundamental domains. Dirichlet fundamental domains are sometimes not so practical for applications and in what follows seek simpler fundamental domains.

Let $\Gamma$ be a subgroup of finite index in $\Gamma(1)= \mathrm{ PSL}(2,\mathbb{Z})$ such that $\sigma$ is in the normalizer of $\Gamma$ (note that $\sigma$ is already in the normalizer of $\Gamma(1)$ ). We will use the following notation: a closed domain $\mathcal{D}$ will be considered {\it admissible} if after suitable identification of boundary arcs, the resulting surface, denoted by $\tilde{\mathcal{D}}$ is a fundamental domain of $\Gamma$ (this latter is not unique and is said to be associated to the former). We allow for the inclusion of cusps in our closed sets. Having chosen such an admissible domain $\mathcal{D}$, we seek to determine the sets 
\[
\mathrm{ Fix}_{\Gamma}(\sigma;\mathcal{D}) = \left\{z\in \mathcal{D}: \exists\ \gamma\in\Gamma\ \mathrm{such\  that}\  \sigma z = \gamma z \right\},
\]
and $\mathrm{ Fix}_{\Gamma}(\sigma;\tilde{\mathcal{D}})$ which is obtained from the former by requiring points  on the boundaries to be inequivalent modulo $\Gamma$. 

Let $\tilde{\mathcal{F}}$ denote the standard fundamental domain of $\Gamma(1) $ obtained from
\[
\mathcal{F} = \left\{z=x+iy \colon -\frac{1}{2}\leq x\leq \frac{1}{2},\quad x^2 + y^2 \geq 1\right\},
\]
\begin{figure}

\begin{center}
  \includegraphics[width=0.5\linewidth]{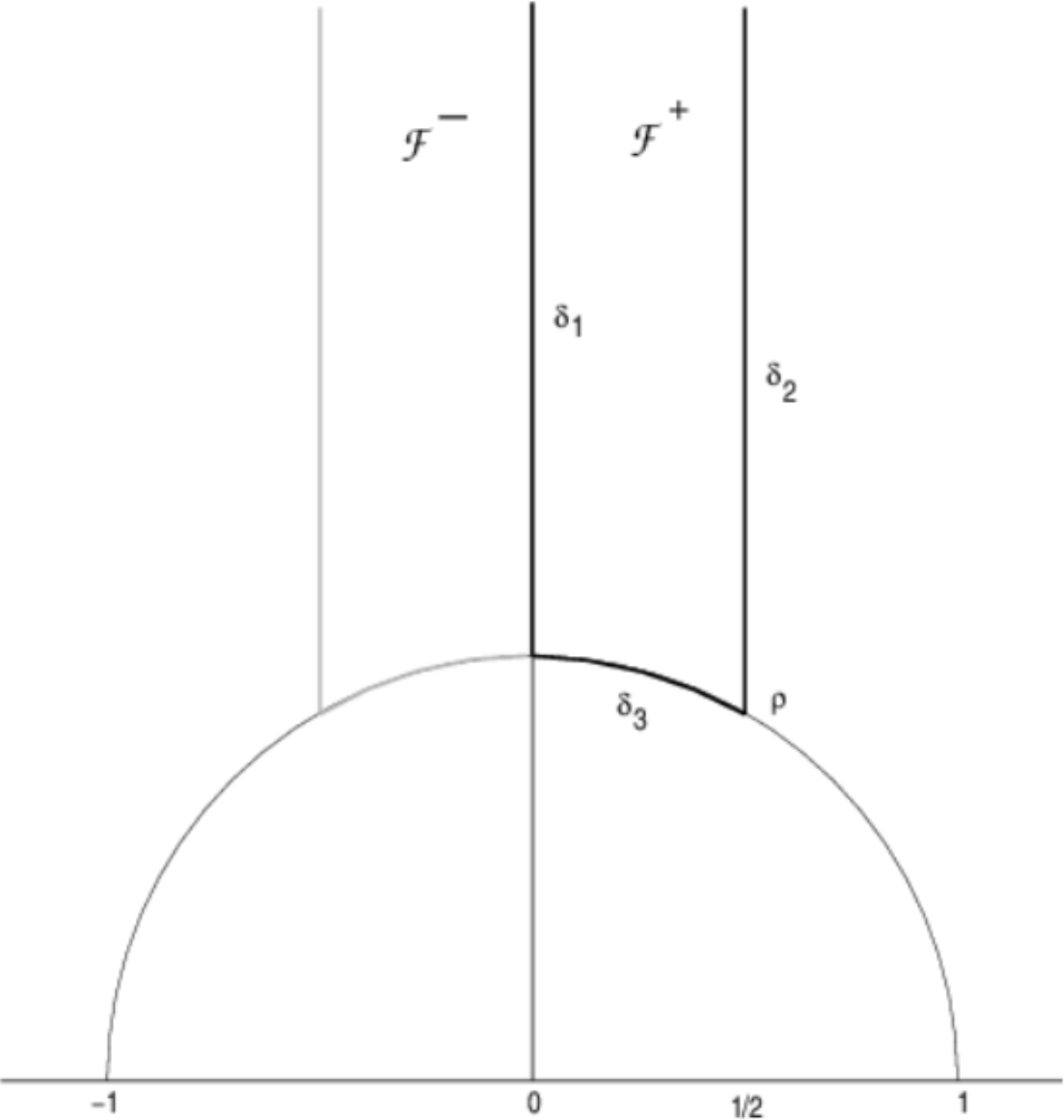}
  \caption{}
  \end{center}
\end{figure}
\begin{figure}
\begin{center}
  \includegraphics[width=0.5\linewidth]{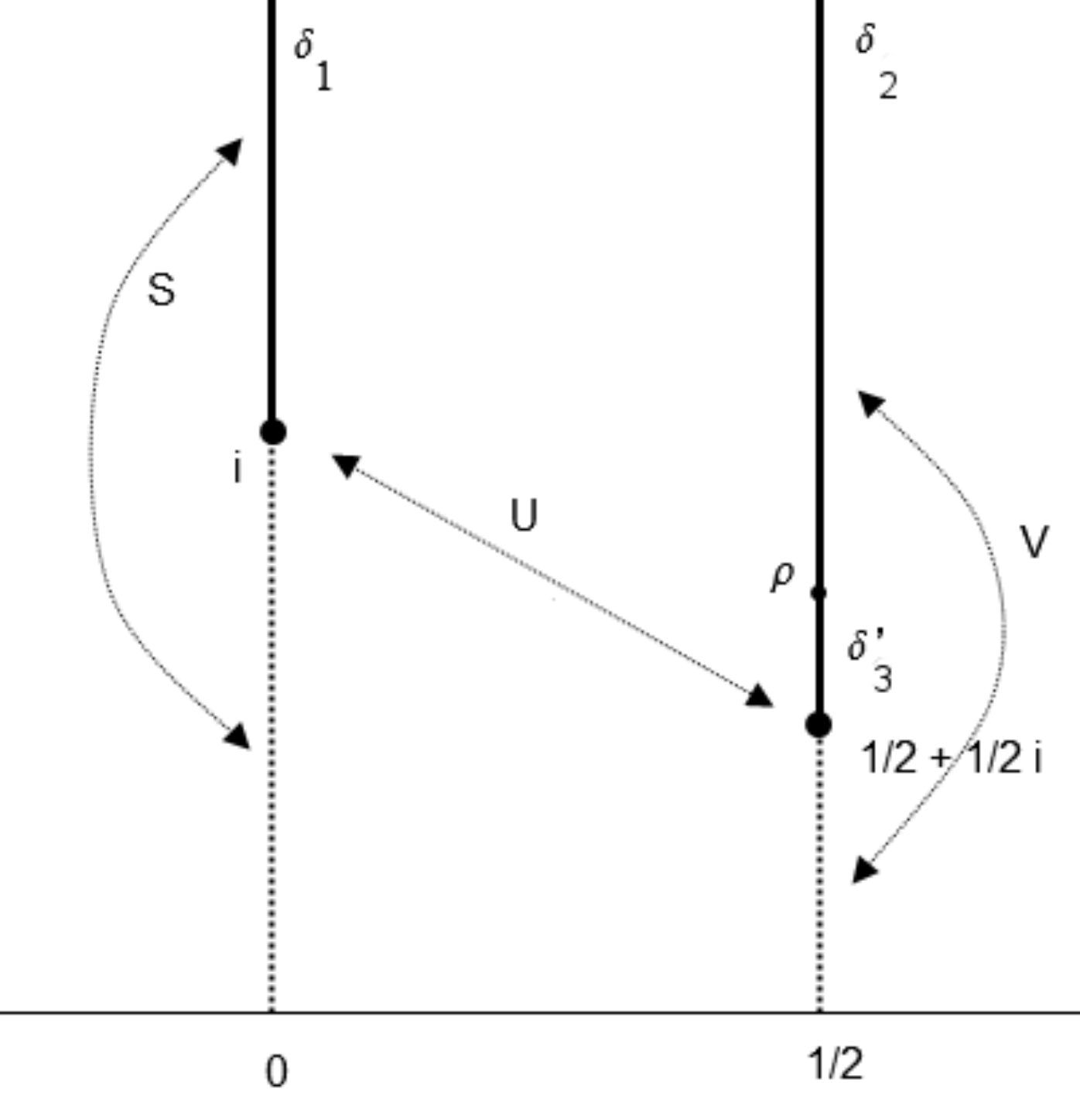}
  \end{center}
\caption{}
\end{figure}

\noindent with the appropriate identification of the boundaries; $\mF$ is invariant under $\sigma$ and we will call $\tmF$ a {\it symmetric standard} fundamental domain.  We partition $\mathcal{F}$ and write $\mathcal{F} = \mathcal{F}^{-} \cup \mathcal{F}^{\text{+}}$ with $\mathcal{F}^{\text{+}} = \{z\in \mathcal{F} \colon \Re(z)\geq 0\}$ and $\mathcal{F}^{-}=\sigma \mathcal{F}^{\text{+}}$. The common boundary $\mathcal{F}^{-} \cap \mathcal{F}^{\text{+}}$ is denoted by $\delta_{1}$ and the other boundaries of $\mF^{\text{+}}$ being $\delta_{2}$ and $\delta_{3}$ as indicated in Figure 2. Let $S =\left( \begin{smallmatrix} 0&-1\\ 1&0 \end{smallmatrix}\right )$ and $T =\left( \begin{smallmatrix} 1&1\\ 0&1 \end{smallmatrix}\right )$. Then, since $\sigma z = Sz$ for $z\in \delta_{3}$ and $\sigma z=T^{-1}z$ for $z\in \delta_{2}$, we see that 
\[\mathrm{ Fix}_{\Gamma(1)}(\sigma;\mF) = \delta\ \cup\ \sigma\delta \quad \mathrm{ and} \quad \mathrm{ Fix}_{\Gamma(1)}(\sigma;\tmF) = \delta,
\]
where $\delta := \delta_{1} \cup \delta_{3} \cup \delta_{2}$ is topologically a closed curve (containing the cusp). The structure of $\delta$ can be further clarified as follows (see Figure 3): the arc $\delta_{3}$ is mapped to the vertical line segment $\delta_{3}^{'}$ joining $\half +\half i$ to $\rho$ using $U:=\left( \begin{smallmatrix} 1&-1\\ 0&-1 \end{smallmatrix}\right )$, where $\rho$ is fixed while $i$ is mapped to the other endpoint. Next note that $\delta_{1}$ is half of the full closed geodesic, the imaginary axis (using $S$) while the vertical ray $\delta_{3}^{'}\cup\delta_{2}$ is half of the closed geodesic $\Re(z)=\half$ using $V :=\left( \begin{smallmatrix} 1&-1\\ 2&-1 \end{smallmatrix}\right )$ ; the combined curves being closed since $i$ is identified with $\half+\half i$, (the endpoints are equivalent). On the Riemann surface, the two closed geodesics are distinct, each containing the cusp and another common point.

 Let $\{\alpha_{1}, \alpha_{2}, \ldots, \alpha_{t}\}$ be a set of right coset representatives of $\Gamma\backslash \Gamma(1)$. It follows that it is then possible to choose coset representatives so that they are of the form $Q = \{\omega_{1}, \hat{\omega}_{1}, \omega_{2}, \hat{\omega}_{2}, \ldots,$ $ \omega_{r}, \hat{\omega}_{r}, \nu_{1}, \ldots , \nu_{s}\}$   such that $\nu_{j} \sim \hat{\nu}_{j} \mod \Gamma$ but with the $\omega_{j}$'s and $\hat{\omega}_{j}$'s inequivalent (such a set always exists with the integers $r$ and $s$ uniquely determined).

With $\mF$ as above, let $\mF_{\Gamma,Q} = \bigcup_{q\in Q}\ q\mF$. This is an admissible domain for $\Gamma$ and we call an associated $\widetilde{\mF_{\Gamma,Q}}$ a $standard$ fundamental domain of $\Gamma$, after identification of sides.  Such a domain $\mF_{\Gamma,Q}$ is made up of disjoint triangles (except for possible common boundary arcs). Moreover, any two interior points (in distinct triangles) are never $\Gamma$-equivalent. Applying $\sigma$ gives $\sigma\mF_{\Gamma,Q}=\bigcup_{q\in Q}\ \hat{q}\mF$, so that $\sigma\mF_{\Gamma,Q} =\mF_{\Gamma,Q}$ only if $ \hat{\nu}_{j} = \nu_{j}$, which is certainly true if $\nu_{j} = I$ or $S$. Thus, $\mF_{\Gamma,Q}$ is $\sigma$-symmetric only if $Q$ contains no $\nu_{j}$'s except $I$ and possibly $S$. Now suppose  there exist an $\alpha \in   \Gamma(1)$ such that $\alpha \notin \Gamma \cup \Gamma S$, with $\alpha$ satisfying $\hat{\alpha} \sim \alpha \mod \Gamma$. Then any set $Q$ must contain a $\nu_{j} \neq I\ \mathrm{ or}\ S$ so  that the corresponding $\mF_{\Gamma,Q}$ cannot be symmetric. This then gives us a criteria for checking the existence of symmetric standard fundamental domains.

Let us now consider subgroups $\Gamma$ not containing $S$ in what follows. Suppose $\Gamma$ has a symmetric, connected fundamental domain $\mathfrak{X}$, that is, $\sigma \mathfrak{X}^{\mathrm{ o}}=\mathfrak{X}^{\mathrm{ o}}$ for interior points. Also suppose that a set of cosets $Q$ exists containing a $\nu \notin \Gamma \cup \Gamma S$ with $\hat{\nu}\sim \nu$. Since $S$ is not in $\Gamma$, we will choose $Q$ to contain both $I$ and $S$. It follows that the associated standard fundamental domain contains the imaginary axis, denoted by $\delta^{\text{*}}$. Let $z\in \delta^{\text{*}}$ so that there is a $\gamma \in \Gamma$ with $\gamma z\in \mathfrak{X}$. We assume this image is an interior point, so that $\sigma\gamma z$ is too. Since $\sigma\gamma z = \hat{\gamma}z$, it follows that $\hat{\gamma}\gamma^{-1}\in \Gamma(1)$ fixes $z$, so that $\gamma = I$ (with $\gamma = S$ being excluded by assumption). It follows that $\delta^{\text{*}}$ is contained in $\mathfrak{X}^{\mathrm{ o}}$. Now consider $z=\nu\zeta \in \widetilde{\mF_{\Gamma,Q}}$, with $\zeta \in \delta_{1}$. If $\gamma v\zeta \in \mathfrak{X}^{\mathrm{ o}}$, then again so is the reflection, so that now we conclude there are two interior points $x_{1}$ and $x_{2}$ of $\mathfrak{X}$ so that $x_{1} = \widehat{\gamma \nu}(\gamma \nu)^{-1}x_{2}$. Since $\hat{\nu}\nu^{-1}\in\Gamma$, it follows that $\gamma \nu = I$ or $S$, which is impossible since $\nu \notin \Gamma \cup \Gamma S$. It follows that if such a $\nu\in \Gamma(1)$ exists with $\hat{\nu}\sim \nu \mod \Gamma$, then one cannot have such a symmetric fundamental domain (i.e. domains that map interior points to interior points of standard domains).\ 

As an application of our criteria about standard fundamental domains,  consider $\Gamma_{0}(N)$ for  $N\geq 2$. It has coset representatives given by matrices $\left( \begin{smallmatrix} a&b\\ c&d \end{smallmatrix}\right)$ with $d|N$, $(c,d)=1$, $c \mod \frac{N}{d}$ and with $a$ and $b$ chosen to satisfy $ad-bc=1$. 

If $N=p^{k}$ with $p$ an odd prime number and $k\geq 1$, we may choose the following set of representatives: $I$, $S$, $\left\{\left( \begin{smallmatrix} a&b\\ c&1 \end{smallmatrix}\right), \left( \begin{smallmatrix} a&-b\\ -c&1 \end{smallmatrix}\right)\ \mathrm{ with}\ 1\leq |c|\leq \frac{p^{k} -1}{2}\right\}$, and for any $1\leq l \leq k-1$ the set $\left\{\left( \begin{smallmatrix} a&b\\ c&p^{l} \end{smallmatrix}\right), \left( \begin{smallmatrix} a&-b\\ -c&p^{l} \end{smallmatrix}\right)\ \mathrm{ with}\ 1\leq |c|\leq \frac{p^{k-l}-1}{2},\ (c,p)=1\right\}$ (this latter being empty if $k=1$). Thus, for $p$ odd, $\Gamma_{0}(p^k)$ has a symmetric standard  fundamental domain.

On the other hand, if $N \neq p^k$ as above, then one can construct a $\nu$ as discussed above. For example, if $N=N_{1}N_{2}$, with $(N_{1},N_{2})=1$ and $N_{1}, N_{2} \neq 1$, then we choose  $\nu = \left( \begin{smallmatrix} a&b\\ N_{1}&N_{2} \end{smallmatrix}\right)\in \Gamma(1)$. Then $\hat{\nu}\nu^{-1} = \left( \begin{smallmatrix} *&*\\ -2N&* \end{smallmatrix}\right)\in \Gamma_{0}(N)$, and $\nu$ is not equivalent to $I$ and $S$. Thus by the discussion above, $\Gamma_{0}(N)$ will not have a symmetric standard fundamental domain.

It is also of interest to determine when, given a fundamental domain $\widetilde{D}$ of $\Gamma$ containing a full set of inequivalent cusps (we assume $0$ and infinity are cusps in $\widetilde{D}$ assuming that $\Gamma$ does not contain $S$), that all the cusps are contained in $\mathrm{ Fix}_{\Gamma}(\sigma;\widetilde{D})$.  Clearly 0 and infinity belong to $\mathrm{ Fix}_{\Gamma}(\sigma;\widetilde{D})$. If $\mathfrak{a}$ is any other cusp in $\widetilde{D}$   with $\alpha(\infty) = \mathfrak{a}$ for some $\alpha \in \Gamma(1)$, then there exists a non-zero $b\in \mathbb{Z}$ such that $\alpha T^{b}\alpha^{-1}\in \Gamma$. It then follows that  $\sigma\mathfrak{a} \sim \mathfrak{a} \mod \Gamma$ if and only if there exist a $k \in \mathbb{Z}$ such that $\hat{\alpha} T^{k}\alpha^{-1} \in \Gamma$. This occurs if and only if $\hat{\alpha}\alpha^{-1} \in \Gamma$. We illustrate when this does and does not occur with two explicit examples. 

Suppose $p\neq q$ are  odd primes. Then $\Gamma = \Gamma_{0}(pq)$ has exactly four inequivalent cusps, which we may take to be infinity, 0, $\frac{1}{p}$ and $\frac{1}{q}$ in a suitable fundamental domain (see Section 6.3.1). Choose integers $a$ and $b$ so that $ap-bq=1$. Then put $\alpha = \left( \begin{smallmatrix} 1&-a\\ p&-bq \end{smallmatrix}\right)$ so that $\alpha(\infty) = \frac{1}{p}$ and $\hat{\alpha}\alpha^{-1}\in \Gamma_{0}(pq)$. We then see that $\left( \begin{smallmatrix} 1-2ap&2a\\ 2bpq&1-2ap \end{smallmatrix}\right)(\frac{1}{p}) = -\frac{1}{p} = \sigma(\frac{1}{p})$ as required. The argument is analogous for the other cusp.

Now consider $\Gamma=\Gamma_{0}(p^{2})$, having $p+1$ inequivalent cusps. By the discussion above there is a symmetric standard fundamental domain, so that may choose the cusps symmetrically. It is easily shown that a set of inequivalent cusps are infinity, 0 and the points of the set $\mathcal{M}=\left\{\frac{1}{jp} ; 1\leq |j| \leq \frac{p-1}{2}\right\}$. We know that there are no $\alpha$'s (except for the discounted ones) in $\Gamma(1)$ satisfying $\hat{\alpha}\alpha^{-1} \in \Gamma_{0}(p^2)$ so that we do not expect any of the $p-1$ cusps in $\mathcal{M}$ to be in the fixed set, which is trivially the case since $\sigma(\frac{1}{jp})= \frac{1}{-jp}$.

\vspace{20pt}
We now give a detailed analysis in the case of $\Gamma_{0}(p)$ and $\Gamma_{0}(pq)$.


\subsubsection{Symmetric standard fundamental domain for $\Gamma_{0}(p)$.}\ 

Let $\rho = \frac{1}{2} + i\frac{\sqrt{3}}{2}$ denote the corner of $\mathcal{F}$, it having order 3, with the other corner being $\sigma\rho := \breve{\rho}$.\footnote{When convenient we shall denote $\sigma z$ by $\breve{z}$\ .}
Define for each $l\in \mathbb{Z}$
\[
\alpha_{l} =\left( \begin{smallmatrix} 0&-1\\ 1&-l \end{smallmatrix}\right ) = ST^{-l}\ .
\]
We note for now that
\[
\hat{T} = T^{-1}\quad ; \quad \hat{S} = S \quad \mathrm{ and} \quad \alpha_{l-j}=\alpha_{l}T^{j}\ ,
\] 
and that
\begin{equation}\label{e340}
 \hat{\alpha_{l}} = \alpha_{-l}\quad ; \quad \sigma \alpha_{l} =\alpha_{-l}\sigma\ .
\end{equation}
Let $\mF_{l}$ denote the image of $\mF$ under the map $\alpha_{l}$. It follows from the above considerations that
\[
\sigma\mF_{l} = \alpha_{-l}\sigma\mF = \alpha_{-l}\mF = \mF_{-l}\ .
\]
Put $c_{l}=\alpha_{l}\breve{\rho}$. Since $T^{-1}\rho = \breve{\rho}$ we have by \eqref{e340}
\[
\sigma c_{l}= \alpha_{l}\rho =\alpha_{l}T\breve{\rho} = (ST^{-l})T\breve{\rho}=c_{l-1}\ .
\]
The $\mF_{l}$'s are hyperbolic triangles with corners 
\[
<\alpha_{l}\infty,\ \alpha_{l}\rho,\ \alpha_{l}\breve{\rho}> \quad=\quad <0,\ \sigma c_{-l},\ c_{l}>\quad=\quad<0,\ c_{l-1},\ c_{l}>.
\]
 Thus $\mF_{l}$ and $\mF_{l+1}$ share a common geodesic boundary arc joining $0$ to $c_{l}$. By direct computation, we have
\[
c_{l} = \frac{l+\frac{1}{2}}{(l+\frac{1}{2})^{2}+\frac{3}{4}} + i\frac{\frac{\sqrt{3}}{2}}{(l+\frac{1}{2})^{2}+\frac{3}{4}}\ ,
\]
so that $\Im{(c_{l})} \leq \frac{\sqrt{3}}{2}$ and $|\Re{(c_{l})}|\leq \frac{1}{2}$\ . Moreover, 
\[
\Re{(c_{0})}=\Re{(c_{1})}=\frac{1}{2}\quad ; \quad \Re{(c_{-1})}=\Re{(c_{-2})}= -\frac{1}{2}\ ,
\]
 and otherwise, for $l\geq 1$
\[
\Re{(c_{l})}>\Re{(c_{l+1})}\to 0^{\text{+}} \quad;\quad \Im{(c_{l})}>\Im{(c_{l+1})}\to 0^{\text{+}}.
\]
Since $\sigma\mF_{l} = \mF_{-l}$, the corners of $\mF_{-l}$, for $l\geq 0$, are $<0,c_{-l},c_{-l-1}>$. Hence, corresponding to the sequence $c_{0}, c_{1}, c_{2}, \ldots$ in $x>0$, we have in $x<0$ the mirror image sequence $c_{-1}, c_{-2}, c_{-3}, \ldots$ obtained by the reflection $\sigma$. Gluing together the triangles $\mF_{l}$ for $0\leq l \leq L$, together with their reflections $\mF_{-l}$ and with $\mF$ gives the  geometric configuration in Fig.5 :

\begin{figure}[!ht]
  \begin{center}
    \includegraphics[width=0.7\linewidth]{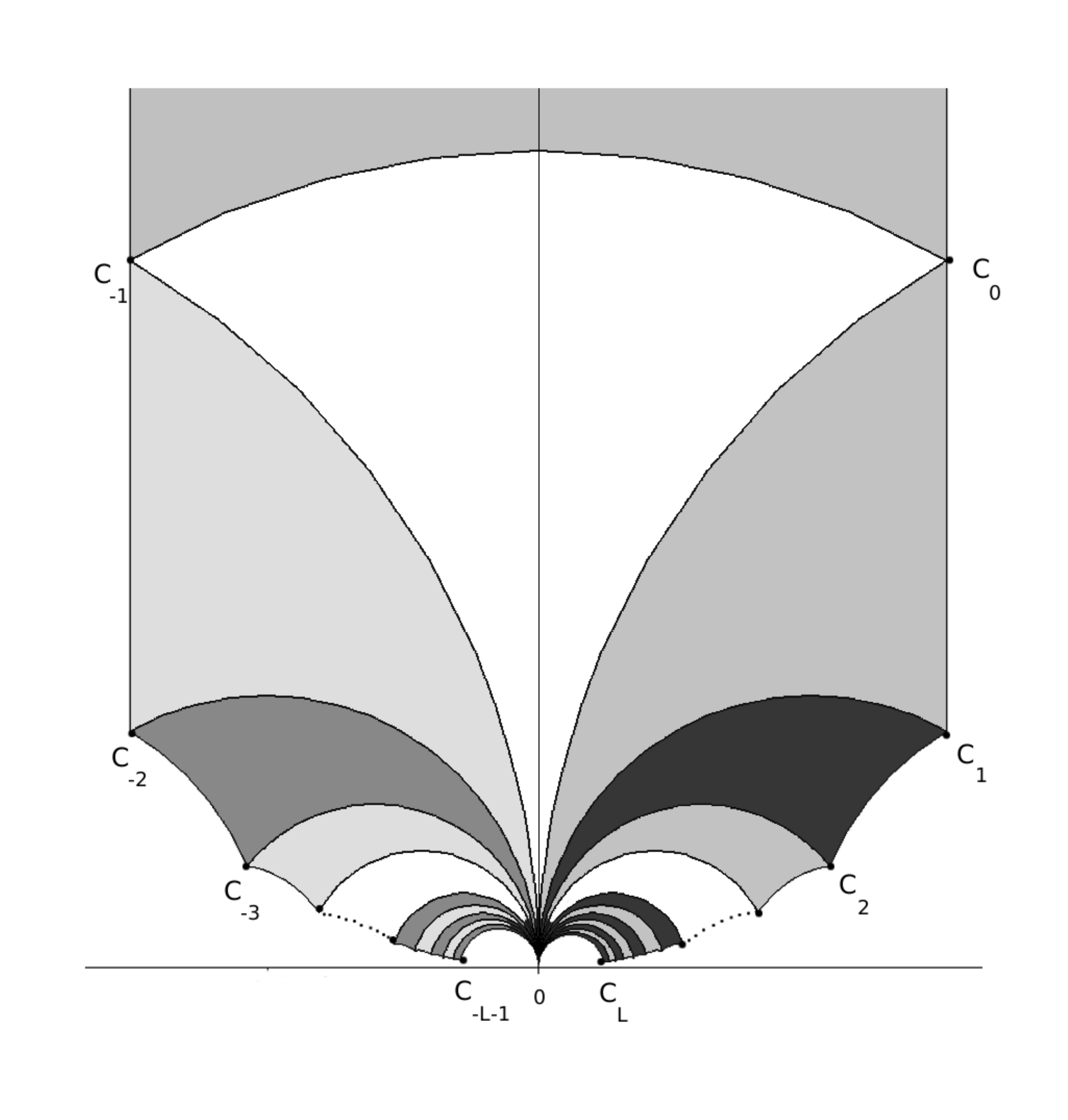}
  \end{center}
  \caption{}
\end{figure}

Let $p\neq 2$ be a prime number and put $L=\frac{p-1}{2}$. Then the set $\{I, \alpha_{l}\}$ forms a set of right coset representatives of $\Gamma_{0}(p)$ in $\Gamma(1)$ with $0 \leq |l| \leq L$, so that 
\begin{equation}\label{e380}
\mathbb{X}_{p} := \mF\ \cup \ \bigcup_{l}\mF_{l},
\end{equation}
gives rise to $\widetilde{\mathbb{X}_{p}}$, a standard fundamental domain of $\Gamma_{0}(p)$ in $\mathbb{H}$, after pairwise identifications of the boundary arcs. We will call the arc of the boundary of the $\mF_{l}$ not having a cusp as an endpoint  an $edge$ of $\mathbb{X}_{p}$, and denote it by $\delta_{l}^{(p)}$.  By construction, $\mathbb{X}_{p}$ is invariant under $\sigma$ and we may decompose it so that $\mathbb{X}_{p} = \mF^{\text{+}}(p)\ \cup\ \sigma\mF^{\text{+}}(p)$, where $\mF^{\text{+}}(p)$ consists of the elements with non-negative real part.

In what follows, we shall use $\sim$ to mean equivalence modulo $\Gamma_{0}(p)$.

\subsubsection{}\ When is $c_{u} \sim c_{v}$ and when is $\sigma c_{u}\sim c_{u}$ ?

We have $\breve{\rho}$ is fixed by $1, \alpha_{-1}$ and $\alpha^{2}_{-1}$, so that $c_{u} \sim c_{v}$ if and only if $\alpha_{u}\alpha^{\eta}_{-1}\alpha_{u} \in \Gamma_{0}(p)$ with $\eta = 0$ or $\pm 1$. This gives us three cases to consider namely (a) $u \equiv v$, (b) $u(v+1)\equiv -1$ or (c) $v(u+1)\equiv -1$, all congruences modulo $p$.

In case (a), since $-L-1 \leq u, v \leq L$, the only non-trivial solution is $u=L, v= -L-1$, so that 
\[
c_{L} \sim c_{-L-1} = \sigma c_{L}.
\]
For case (b), we first note that if $v=1$, then there are two values of $u$, namely $L$ and $-L-1$, so that
\[
c_{1} \sim c_{L} \sim c_{-L-1} = \sigma c_{L}.
\]
For all other $v \neq \pm 1$ there exist a unique $-L < u < L$ such that $c_{u} \sim c_{v}$. The case $v=2$ gives 
\[
c_{-2} \sim c_{1} \sim \sigma c_{1},
\]
using $c_{-2}=\sigma c_{1}$. Case (c) follows from case (b) by symmetry.

Collecting these together gives an answer to the second part of the question as follows: since $\sigma c_{u} = c_{-u-1}$, putting $v=-u-1$ in the congruences above give us the three cases namely (a) $2u\equiv -1$, (b) $u^{2} \equiv 1$ and (c) $(u+1)^2 \equiv 1$, so that the only non-negative choices are $u = L, 1$ and $0$. As we have done above we have only
\begin{equation}\label{e405}
c_{L}\sim c_{-L-1} = \sigma c_{L}\quad ;\quad c_{1}\sim c_{-2} = \sigma c_{1}\quad ;\quad c_{0}\sim c_{-1} = \sigma c_{0}.
\end{equation}


\subsubsection{} We are interested in the points in $\mathbb{X}_{p}$ fixed by $\sigma$ modulo $\Gamma_{0}(p)$. If $z$ is such a point, say $\sigma z = \gamma z$ for some $\gamma \in \Gamma_{0}(p)$, then $w=\sigma z$ is also a fixed point since $\sigma w = \sigma \gamma \sigma w := \hat{\gamma}w$ and $\hat{\gamma}\in \Gamma_{0}(p)$ (as $\sigma$ lies in the normalizer of $\Gamma_{0}(p)$). Thus, it suffices to find the fixed points in $\mF^{\text{+}}(p)$, and they must necessarily occur on the boundary. Let $\delta^{(p)}$ denote the set of all points on $\mF^{\text{+}}(p)$ that are fixed by $\sigma$, that is 
\[
\delta^{(p)} = \left\{z\in \mF^{\text{+}}(p) \ :\ \sigma z \sim z \mod \Gamma_{0}(p)\right\}.
\]
Let $\delta^{\text{*}}$ denote the imaginary axis, so that it is a component of $\delta^{(p)}$. We now determine which of the other arcs of the boundary of  $\mF^{\text{+}}(p)$ contain points that make up $\delta^{(p)}$. Let $\delta^{(p)}_{l}$ denote one such arc not containing a cusp. It is an edge of $\mF_{l}$ and is the image under $\alpha_{l}$ of the segment in $\mF$ that is part of the unit circle,  for some $l$ satisfying $0\leq l\leq L$. So, if $w\in \delta^{(p)}_{l}$, write $w=\alpha_{l}z$ with $|z|=1$. Since $w$ is fixed by $\sigma$ modulo $\Gamma_{0}(p)$, there exists a $\gamma \in \Gamma_{0}(p)$ so that 
\[
\sigma w = \gamma w \iff \sigma\alpha_{l}z = \gamma\alpha_{l}z \iff \alpha_{-l}\sigma z = \gamma\alpha_{l}z \iff \alpha^{-1}_{-l}\gamma\alpha_{l}z = \sigma z.
\]
Now write $\alpha^{-1}_{-l}\gamma\alpha_{l} = \left( \begin{smallmatrix} A&B\\ C&D \end{smallmatrix}\right ) \in \Gamma(1)$ so that $\frac{Az+B}{Cz+D} = -\overline{z}$. Taking imaginary parts, with $z=x+iy$  gives us $\frac{y}{|Cz+D|^2} = y$, and so $C^{2} + D^{2} + 2CDx = 1$. Also, we have $Az+B = -\overline{z}(Cz+D)$ and taking imaginary parts give us $A=D$. If $CD\neq 0$, then $x=\frac{1-C^{2}-D^{2}}{2CD}$ and since $|x|\leq \frac{1}{2}$, it follows that $(2|C|-|D|)^{2} + 3D^{2} \leq 4$. Hence $|C|=|D|=1$, implying $x=\pm \frac{1}{2}$ so that $z=\rho$ or $\breve{\rho}$. Thus $z=c_{l}$ for some $l$ so that by \eqref{e405}, the only possible non-negative values for $l$ are $0$, $1$ and $L$. 

Next, if $C=0$, we necessarily have $x = -\frac{B}{2A}$ and $|A|=|D|=1$, so that $|x|= \frac{1}{2}$, with the same conclusion as before. Finally, if $D=0$, then $\left( \begin{smallmatrix} A&B\\ C&D \end{smallmatrix}\right ) = \pm S$. This implies that $\alpha_{-l}S\alpha_{l} \in \Gamma_{0}(p)$, which can occur only when $l^{2} \equiv 1 \mod p$, so that $l$=1. This means the edge with endpoints $\alpha_{1}\rho$ and $\alpha_{1}\breve{\rho}$, is fixed. This is the edge with endpoints $c_{0}$ and $c_{1}$.

It remains to check the two arcs in $\mF^{\text{+}}_{p}$ with cusps. The first, joining $0$ with $c_{L}$ is a segment of the image of the line $z= -\frac{1}{2} + iy$ under the map $\alpha_{L}$. A direct computation shows that $\sigma \alpha_{L}z = \left( \begin{smallmatrix} 1&0\\ -p&1 \end{smallmatrix}\right )\alpha_{L}z$ so that this arc, the one joining $0$ to $c_{L}$ is fixed. Also, the vertical line segment joining $c_{0}$ to $i\infty$ is trivially fixed using the map $T$. Finally, one observes that the $\sigma$-reflection of these fixed curves are $\Gamma_{0}(p)$-equivalent to themselves. We thus have

\begin{lemma}\label{B}\ 
\begin{itemize}
\item[(i)] $\delta^{(p)}$ above, is a simple  curve consisting of three geodesic arcs: the vertical line segment joining the two cusps $0$ and $i\infty$, the arc joining the cusp $0$ and the vertex $c_{L}$, and the vertical line segment joining the vertex $c_{1}$ and the cusp at infinity. Moreover,
\[
\mathrm{ Fix}_{\Gamma_{0}(p)}(\sigma;\mathbb{X}_{p}) = \delta^{(p)}\ \cup\ \sigma\delta^{(p)}.
\]
\item[]
\item[(ii)] $\mathrm{ Fix}_{\Gamma_{0}(p)}(\sigma;\widetilde{\mathbb{X}_{p}} )$ is a simple  closed curve containg the cusp at infinity. It is closed since $c_{L} \sim c_{1}$. We have
\[
\mathrm{ Fix}_{\Gamma_{0}(p)}(\sigma;\widetilde{\mathbb{X}_{p}}) = \delta^{(p)},
\]
after gluing $c_{L}$ with  $c_{1}$.
\item[]
\item[(iii)] $\mathrm{ Fix}_{\Gamma_{0}(p)}(\sigma;\mathbb{H} )$ modulo $\Gamma_{0}(p)$ can be viewed as the union of two inequivalent complete geodesics with the endpoints equivalent. More precisely, it is the union of the geodesics $\Re(z)=0$ and $\Re(z)=\half$, with the endpoints $0$ and $\half$ being equivalent.
\end{itemize}
\end{lemma}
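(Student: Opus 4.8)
The plan is to derive the lemma by assembling the case-by-case analysis carried out just above into a single statement and supplying the modest amount of topological and group-theoretic bookkeeping it needs.

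For part (i), I would first collect what the preceding discussion established about which boundary arcs of $\mF^{+}(p)$ carry points of $\delta^{(p)}$: (1) $\sigma$ fixes the imaginary axis $\delta^{*}$ pointwise, so $\delta^{*}\subset\delta^{(p)}$; (2) among the edges $\delta^{(p)}_{l}$ with $0\le l\le L$, only $\delta^{(p)}_{1}$ is fixed as a whole (the case $D=0$, $l^{2}\equiv 1\bmod p$), and on any other $\delta^{(p)}_{l}$ a fixed point must be a vertex $c_{l}$, of which, by \eqref{e405}, only $c_{0},c_{1},c_{L}$ are fixed; (3) the vertical ray $\delta_{2}$ from $c_{0}=\rho$ to $i\infty$ is fixed (via $T$), and the arc from $0$ to $c_{L}$, the image of $\{-\tfrac12+iy\}$ under $\alpha_{L}$, is fixed (via $\left(\begin{smallmatrix}1&0\\-p&1\end{smallmatrix}\right)$). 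The one new observation needed is that $\delta^{(p)}_{1}$ and $\delta_{2}$ both lie on the geodesic $\Re(z)=\tfrac12$ — since $\alpha_{1}\rho=\rho=c_{0}$, $\alpha_{1}\breve\rho=c_{1}$ and $\alpha_{1}i=\tfrac12+\tfrac{i}{2}$ all have real part $\tfrac12$ — so $\delta^{(p)}_{1}\cup\delta_{2}$ is a single vertical segment on $\Re(z)=\tfrac12$ from $c_{1}$ to the cusp. Hence $\delta^{(p)}$ is exactly the union of the three stated geodesic arcs. To see it is a simple arc one checks that the three underlying geodesics — $\{\Re(z)=0\}$, $\{\Re(z)=\tfrac12\}$, and $\alpha_{L}\{\Re(z)=-\tfrac12\}$ (a semicircle from $0$ to $2/p$) — meet in $\overline{\mathbb H}$ only at the cusps $0$ and $i\infty$, and that along these arcs $\Re\big(\alpha_{L}(-\tfrac12+iy)\big)=\frac{L+1/2}{(L+1/2)^{2}+y^{2}}\le\tfrac12$ for $y\ge\tfrac{\sqrt3}{2}$, $L\ge1$, with equality only at the endpoint $c_{L}$; so the union is a simple arc running $c_{L}\to0\to i\infty\to c_{1}$. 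Finally $\mathrm{Fix}_{\Gamma_{0}(p)}(\sigma;\mathbb{X}_{p})=\delta^{(p)}\cup\sigma\delta^{(p)}$ is immediate from $\mathbb{X}_{p}=\mF^{+}(p)\cup\sigma\mF^{+}(p)$ and $\sigma$-invariance of the fixed set.

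For part (ii), I would pass to $\widetilde{\mathbb{X}_{p}}$ and use the side-pairings of the admissible domain $\mathbb{X}_{p}$: the segment on $\Re(z)=-\tfrac12$ (part of $\sigma\delta^{(p)}$) is paired with the segment on $\Re(z)=\tfrac12$ by $T\in\Gamma_{0}(p)$, and the arc from $0$ to $\sigma c_{L}=c_{-L-1}$ is paired with the arc from $0$ to $c_{L}$ by the corresponding element of $\Gamma_{0}(p)$; so $\sigma\delta^{(p)}$ is identified with $\delta^{(p)}$ in $\widetilde{\mathbb{X}_{p}}$ and $\mathrm{Fix}_{\Gamma_{0}(p)}(\sigma;\widetilde{\mathbb{X}_{p}})=\delta^{(p)}$. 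Since $c_{L}\sim c_{1}$ by case (b) of \eqref{e405}, the two free endpoints of the arc $\delta^{(p)}$ are identified and $\delta^{(p)}$ becomes a simple closed curve through the cusp $i\infty$. For part (iii), I would lift to $\mathbb H$ and identify the complete geodesics carrying the three arcs: the first two are $\{\Re(z)=0\}$ and $\{\Re(z)=\tfrac12\}$, while for the third I claim $\alpha_{L}\{\Re(z)=-\tfrac12\}$ is $\Gamma_{0}(p)$-equivalent to $\{\Re(z)=-\tfrac12\}$, hence (by $T\in\Gamma_{0}(p)$) to $\{\Re(z)=\tfrac12\}$. Indeed the stabiliser in $\mathrm{PSL}_{2}(\mathbb Z)$ of the geodesic with endpoints $\{-\tfrac12,\infty\}$ is $\{I,M\}$ with $M=\left(\begin{smallmatrix}-1&-1\\2&1\end{smallmatrix}\right)$, and $\alpha_{L}M=\left(\begin{smallmatrix}-2&-1\\-1-2L&-1-L\end{smallmatrix}\right)$ has lower-left entry $-1-2L=-p$, so $\alpha_{L}M\in\Gamma_{0}(p)$ and $\alpha_{L}M$ carries $\{\Re(z)=-\tfrac12\}$ onto $\alpha_{L}\{\Re(z)=-\tfrac12\}$. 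Thus modulo $\Gamma_{0}(p)$ the fixed set is the union of the two geodesics $\{\Re(z)=0\}$ and $\{\Re(z)=\tfrac12\}$; these are inequivalent (no element of $\mathrm{PSL}_{2}(\mathbb Z)$, a fortiori none of $\Gamma_{0}(p)$, carries $\{0,\infty\}$ to $\{\tfrac12,\infty\}$), while $\infty$ is a common endpoint and $\tfrac12\sim0$ modulo $\Gamma_{0}(p)$ since $\gcd(2,p)=1$, which is exactly the assertion.

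The step I expect to be most delicate is the bookkeeping in parts (ii) and (iii): pinning down the explicit side-pairing element gluing the arc from $0$ to $c_{L}$ to its $\sigma$-reflection (so that $\sigma\delta^{(p)}$ really collapses onto $\delta^{(p)}$ in $\widetilde{\mathbb{X}_{p}}$), and tracking orientations carefully enough to conclude that the resulting closed curve is embedded rather than merely immersed. Once the equivalence $\alpha_{L}\{\Re(z)=-\tfrac12\}\sim\{\Re(z)=\tfrac12\}$ and that side-pairing are in place, the remaining points — simplicity of $\delta^{(p)}$ in $\mathbb H$, its closing up via $c_{L}\sim c_{1}$, and the inequivalence of the two boundary geodesics — are routine.
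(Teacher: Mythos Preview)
Your proposal is correct and follows the same approach as the paper. Parts (i) and (ii) are, as you say, a matter of assembling the preceding case analysis, which the paper simply takes as read; your extra observation that $\delta^{(p)}_{1}\cup\delta_{2}$ lies on $\Re(z)=\tfrac12$ and your simplicity check are fine. For part (iii) the paper gives the single matrix $\left(\begin{smallmatrix} L&-1\\ p&-2\end{smallmatrix}\right)\in\Gamma_{0}(p)$ sending the arc from $0$ to $c_{L}$ onto the vertical segment from $\tfrac12$ to $c_{1}$; your element $T(\alpha_{L}M)^{-1}$, with $M=\left(\begin{smallmatrix}-1&-1\\ 2&1\end{smallmatrix}\right)$, is literally equal to this matrix (since $p-1-L=L$), so the two arguments coincide.
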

\begin{proof} To verify the third part, we observe that the arc joining $0$ with $c_{L} = \frac{2p}{p^{2}+3}+\frac{2\sqrt{3}}{p^{2}+3}i$ \ is equivalent to the vertical segment joining $\half$ to $c_{1} = \half + \frac{1}{2\sqrt{3}}i$ using $\left( \begin{smallmatrix} L&-1\\ p&-2 \end{smallmatrix}\right )\in \Gamma_{0}(p)$.
\end{proof}
Recall the definition of a non-separating closed curve on a (connected) surface: it is non-separating if when the surface is cut along the curve, the cut surface is still connected.

\begin{lemma} 
$\mathrm{ Fix}_{\Gamma_{0}(p)}(\sigma;\widetilde{\mathbb{X}_{p}})$ is a separating  curve in $\widetilde{\mathbb{X}_{p}}$ for $p = 2, 3, 5, 7 $ and $13$, these having a simply connected fundamental domain. For all other $p$, $\mathrm{ Fix}_{\Gamma_{0}(p)}(\sigma;\widetilde{\mathbb{X}_{p}})$ is a non-separating  curve in $\widetilde{\mathbb{X}_{p}}$.
\end{lemma}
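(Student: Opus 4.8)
Here is how I would go about proving the Lemma.

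The plan is to split the two assertions according to the genus of $\widetilde{\mathbb X_p}$, which -- once the two cusps of $\Gamma_0(p)$ are filled in -- is the modular curve $X_0(p)$, and to use the classical fact that $g_0(p)=0$ precisely for $p\in\{2,3,5,7,13\}$. For those five primes the first assertion is immediate: $X_0(p)$ is then a $2$-sphere (this is what the phrase ``simply connected fundamental domain'' refers to), and every simple closed curve on $S^2$ is separating by the Jordan curve theorem, so $\mathrm{Fix}_{\Gamma_0(p)}(\sigma;\widetilde{\mathbb X_p})=\delta^{(p)}$ separates.

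For the remaining primes, where $g_0(p)\ge 1$, I would argue through the explicit half-domains, writing $\mathbb X_p=\mathcal F^+(p)\cup\mathcal F^-(p)$ with $\mathcal F^-(p)=\sigma\mathcal F^+(p)$ and letting $Y^\pm$ be their images in $\widetilde{\mathbb X_p}$. Because $\sigma$ interchanges $Y^+$ and $Y^-$ and fixes $\delta^{(p)}$ pointwise, if $\delta^{(p)}$ were separating then $\widetilde{\mathbb X_p}\setminus\delta^{(p)}$ would be $\mathrm{int}(Y^+)\sqcup\mathrm{int}(Y^-)$ with the two open halves homeomorphic; hence $\delta^{(p)}$ is non-separating exactly when some boundary arc of $\mathcal F^+(p)$ lying off $\delta^{(p)}$ gets $\Gamma_0(p)$-identified with a boundary arc of $\mathcal F^-(p)$. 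By Lemma \ref{B} the boundary arcs of $\mathcal F^+(p)$ not contained in $\delta^{(p)}$ are exactly the edges $\delta^{(p)}_l=\alpha_l(\{|z|=1\})$ with $2\le l\le L=\frac{p-1}{2}$, and those of $\mathcal F^-(p)$ are their reflections $\delta^{(p)}_{-l}=\alpha_{-l}(\{|z|=1\})$. Since the $\mathrm{PSL}_2(\mathbb Z)$-stabiliser of the geodesic $\{|z|=1\}$ is $\{I,S\}$, one has $\delta^{(p)}_l\sim\delta^{(p)}_{-m}$ iff $\alpha_{-m}\alpha_l^{-1}$ or $\alpha_{-m}S\alpha_l^{-1}$ lies in $\Gamma_0(p)$; with $\alpha_l=ST^{-l}$ a short computation gives $\alpha_{-m}\alpha_l^{-1}=\left(\begin{smallmatrix}1&0\\-l-m&1\end{smallmatrix}\right)$, which is never in $\Gamma_0(p)$ since $0<l+m<p$, and $\alpha_{-m}S\alpha_l^{-1}=\left(\begin{smallmatrix}l&-1\\1-lm&m\end{smallmatrix}\right)$, which is in $\Gamma_0(p)$ iff $lm\equiv1\pmod p$. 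So the whole problem reduces to a question about residues: for which primes $p$ do there exist $l,m\in\{2,\dots,\frac{p-1}{2}\}$ with $lm\equiv1\pmod p$?

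The remaining point -- that such a pair exists for every prime $p\ge 11$ and for no $p\in\{2,3,5,7,13\}$ -- is the crux, and I expect it to be the main obstacle. The non-existence for the five small primes is a finite check. When $g_0(p)$ is odd the existence can be bypassed altogether: a single fixed circle then cannot separate, since otherwise $\widetilde{\mathbb X_p}$ would be the orientable double of one of its two homeomorphic halves, a compact surface with a single boundary circle, which forces $g_0(p)$ even. For general $p\ge 11$ I would exhibit $(l,m)$ by writing $ap+1=lm$ for the smallest feasible $a\ge1$: it suffices to find a divisor of $ap+1$ in the window $\bigl[\,2(ap+1)/(p-1),\ \frac{p-1}{2}\,\bigr]$, which for $a=1$ works unless $p+1$ is twice a prime (for instance $(l,m)=(3,\tfrac{p+1}{3})$ when $3\mid p+1$, or $(4,\tfrac{p+1}{4})$ when $4\mid p+1$), the few structured families $p=2q-1$ ($q$ prime) that survive being treated with $a=2$ or $a=3$ in the same way. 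Combining this with the genus-zero case completes the proof.
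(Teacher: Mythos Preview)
Your reduction to the congruence problem is correct and matches the paper's: after observing that the non-fixed boundary edges of $\mathcal F^+(p)$ are exactly the $\alpha_l\{|z|=1\}$ with $2\le l\le L$, you correctly compute that $\delta^{(p)}_l\sim\delta^{(p)}_{-m}$ iff $lm\equiv 1\pmod p$, so non-separation is equivalent to the existence of $l,m\in[2,L]$ with $lm\equiv 1\pmod p$. Your genus-parity observation (that a single fixed circle cannot separate when $g_0(p)$ is odd, else the surface would be a double and have even genus) is a pleasant shortcut the paper does not use.

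The genuine gap is in your endgame for the congruence problem. For $a=1$ you correctly isolate the obstruction $p+1=2q$ with $q$ prime, but the assertion that ``$a=2$ or $a=3$'' disposes of these is not a proof and is very likely false for any fixed bound on $a$. Concretely, $a=2$ fails whenever $2p+1=4q-1$ is prime or $3\times(\text{prime})$ (since the cofactor of $3$ is $(2p+1)/3>L$), and $a=3$ fails whenever $3p+1=2(3q-1)$ has no divisor in $[7,L]$ with cofactor in the same range; standard prime $k$-tuple heuristics predict infinitely many $p$ for which $a=1,2,3$ all fail simultaneously, and nothing in your sketch prevents this. The paper itself reaches exactly this impasse (``We do not have any obvious candidates for these latter primes'') and does \emph{not} resolve it elementarily: it instead proves the equidistribution estimate
\[
N_p(L,L)=\frac{L^2}{p}+O(\sqrt{p}\log^2 p)
\]
via Weil's bound for Kloosterman sums, giving $N_p(L,L)>0$ for $p\ge 3\times 10^7$, and then closes the finite gap $11\le p<3\times 10^7$ by direct computer verification. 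To complete your argument you need either this analytic input or a genuinely new elementary device; the casework on small $a$ cannot be made to terminate as written.
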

\begin{proof}
It is seen from Fig. 5 and Fig. 6 that $\delta^{(p)}$ is not a separating  curve if and only if one may join any point in $\mF^{\text{+}}(p)$ with any other in $\mF^{-}(p)$ by a curve that passes through an non-vertical edge in $\mF^{\text{+}}(p)$. This will be so only when there is a non-vertical edge in $\mF^{\text{+}}(p)$ that is $\Gamma_{0}(p)$-equivalent to a non-vertical edge in $\mF^{-}(p)$ (noting from the discussion above that edges are not a part of the fixed point set except for the vertical edges from $\mF_{\pm1}$). Let $\alpha_{l}$ and $\alpha_{k}$ determine two such equivalent edges with $2\leq l\leq L$ and $2\leq |k|\leq L$ . Then it follows that this is only possible if $\alpha_{l}^{-1}\gamma\alpha_{k}$ fixes the unit circle for some $\gamma\in \Gamma_{0}(p)$. Thus either $l=k$ or $\alpha_{l}S\alpha_{k}^{-1} \in \Gamma_{0}(p)$, so that $lk \equiv -1\ ({\rm mod}\ p)$. Hence the existence of  equivalent non-vertical edges on opposite sides of the imaginary axis is determined by the answer to the following:

\vspace{10pt}
\noindent{\it Question}: Are there any $l$ satisfying $2\leq l\leq L$ such that, if $l^{\text{*}}$ denotes the unique inverse of $l$ in the interval $[-L,L]$, then $l^{\text{*}}\in [2,L]$ ?

\vspace{10pt}
As $2L \equiv -1\ ({\rm mod}\ p)$, we may disregard $\pm2$ and $\pm L$ from consideration. If $p+1 = 2Q$ for an odd composite number $Q=uv$, then  since $3 \leq u,v \leq \frac{p+1}{6}$, one may take $l=u$ and $k=-\frac{p+1}{u}$. On the other hand if $Q$ is even, then take $l=4$ and $k=-\frac{p+1}{4}$. This argument is valid if $p\geq 7$ and accounts for almost all $p$ since the remaining $p$'s are of the form  $p+1 =2q$ for a prime number $q$, which is a thinner set of primes. We do not have any obvious candidates for these latter primes.

We now show that if $p$ is sufficiently large, there are many choices for $l$ above. For $1\leq K_{1}, K_{2} \leq  L$, let $N_{p}(K_{1},K_{2}) = |\{ k\in [1,K_{1}] : k^{\text{*}} \in [1,K_{2}]\}|$. We determine the asymptotic behaviour of $N_{p}(K_{1},K_{2})$ as $p$ grows and apply this to $N_{p}(L,L)$ to deduce our conclusion. It is easily seen that
\[
N_{p}(K_{1},K_{2}) = \frac{1}{p^2}\sum_{a,b(\mathrm{ mod}\ p)} S(a,b;p)\overline{F(a,K_{1})F(b,K_{2})},
\]
where $S(a,b;p)$ is the Kloosterman sum and $F(a,K) = \sum_{s=1}^{K}e(\frac{as}{p})$. Since $S(a,b;p)=-1$ if $p|a$ or $p|b$ but not both while $S(a,b;p)=p-1$ if $p|a$ and $p|b$, we have
\begin{multline}\label{eq20}
N_{p}(K_{1},K_{2}) =  \frac{p-1}{p^2}K_{1}K_{2}\ -\ \frac{1}{p^2}\sum_{\substack{b(\mathrm{ mod}\ p)\\(b,p)=1}}\left(K_{1}F(b,K_{2}) + K_{2}F(b,K_{1})\right) \\ + \frac{1}{p^2}\sum_{\substack{a,b(\mathrm{ mod}\ p)\\ (ab,p)=1}} S(a,b;p)\overline{F(a,K_{1})F(b,K_{2})}. 
\end{multline}
As
\[
\sum_{\substack{b(\mathrm{ mod}\ p)\\(b,p)=1}}F(b,K)\ =\ \sum_{s=1}^{K}\sum_{\substack{b(\mathrm{ mod}\ p)\\(b,p)=1}}e(\frac{sb}{p})\ =\ -K,
\]
the first two terms above contribute $\frac{p-1}{p^2}K_{1}K_{2} + 2\frac{K_{1}K_{2}}{p^2} = \frac{(p+1)K_{1}K_{2}}{p^2}$.

The sum above involving Kloosterman sums is estimated using Weil's estimate $|S(a,b;p)| \leq 2\sqrt{p}$ so that the total contribution to the last sum in \eqref{eq20} is 
\[
\leq 2\frac{\sqrt{p}}{p^2}\sum_{\substack{a(\mathrm{ mod}\ p)\\(a,p)=1}}|F(a,K_{1})|\sum_{\substack{b(\mathrm{ mod}\ p)\\(b,p)=1}}|F(b,K_{2})|.
\]
Now $|F(a,K)|\leq |\sin(\pi\frac{a}{p})|^{-1} \leq 2(\pi\frac{a}{p})^{-1}$, if $1 \leq |a| \leq L$. Hence for say $p\geq 100$ 
\[
\sum_{\substack{a(\mathrm{ mod}\ p)\\(a,p)=1}}|F(a,K)| \leq 2 \sum_{a=1}^{L} 2\left(\frac{\pi a}{p}\right)^{-1} \leq \frac{4}{\pi}p(\log L +1).
\]
Thus, the last term in \eqref{eq20} is bounded by $\frac{32}{\pi^2}\sqrt{p}(\log L  +1)^{2}$.

Hence for all $p$ sufficiently large, 
\[
N_{p}(K_{1},K_{2}) = \frac{K_{1}K_{2}}{p} + O(\sqrt{p}\log^{2}p).
\]

Consequently, $N_{p}(L,L) = \frac{1}{2}L + O(\sqrt{p}\log^{2}p)$, so that it is equally likely that $k^{\text{*}}$ is in $[1,L]$ as not.

Thus, on average, about half the edges in $\mF^{\text{+}}(p)$ are equivalent to edges in $\mF^{-}(p)$.  Moreover since  $N_{p}(K_{1},L) \neq 0$ if $K_{1}\gg \sqrt{p}\log^{2}p$, it follows that  there is always a $l$ in the interval $2\leq l\ll \sqrt{p}\log^{2}p$ which gives rise to a pair of such edges.

To deduce the statement of our lemma, we have for $p \geq 100$, 
\[
\big{|}N_{p}(L,L) - \frac{(p+1)(p-1)^{2}}{4p^2}\big{|} \leq \frac{32}{\pi^2}\sqrt{p}(\log p  +1)^{2},
\]
so that $ N_{p}(L,L) \geq \frac{p}{1000}$ for all primes $ p\geq 3\times10^{7}$. A direct computer calculation verifies the statement of the lemma for the remaining primes greater than 13 and for $p=11$.

\end{proof} 
\begin{figure}[!ht]
  \begin{center}
    \includegraphics[width=0.7\linewidth]{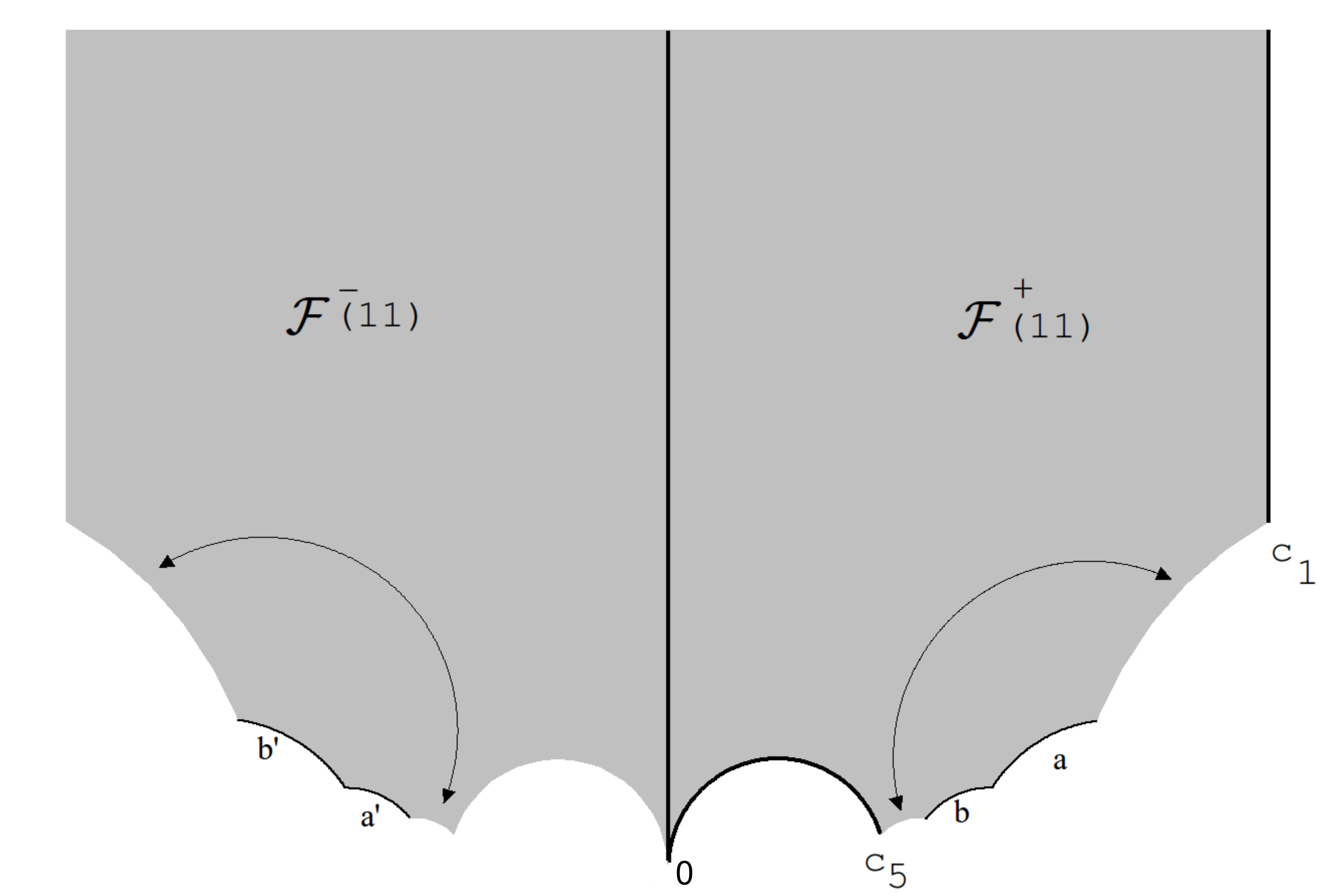}
  \end{center}
  \caption{}
\end{figure}

We illustrate in Figure 6  the above results with the example from $\Gamma_{0}(11)$, which is a subgroup of $\Gamma(1)$ of index $12$, with the resulting Riemann surface having genus 1. The dark arcs make up the fixed point set, with $c_{5}$ indentified with $c_{1}$, giving a closed curve.  After identification of sides, the ``hole'' in the Riemann surface can be seen by the two arcs denoted by $a$ and $b$, with equivalent images modulo $\Gamma_{0}(11)$ denoted by $a'$ and $b'$ respectively. The fact that $\mathrm{ Fix}_{\Gamma_{0}(11)}(\sigma;\mathbb{X}_{11})$ is not separating can then be easily seen as one passes from the right to the left via $a\cup b$.


\subsubsection{Non-symmetric standard fundamental domain for $\Gamma_{0}(pq)$.}\ 

For $p\neq q$ be odd prime numbers, $\Gamma_{0}(pq)$ has index $(p+1)(q+1)$ in $\Gamma(1)$ and has four inequivalent cusps, which may be taken to be infinity, $0$, $\frac{1}{p}$ and $\frac{1}{q}$. We construct a fundamental domain for $\Gamma_{0}(pq)$ using coset representatives as follows: $Q= \{I\}\cup Q_{1}\cup Q_{2}\cup Q_{3}$, where 
\begin{equation}\label{Gammapq}\end{equation}
\begin{itemize}
\item[(1)] $Q_{1} = \left\{ \alpha_{l} : 0\leq l \leq \frac{pq -1}{2}\right\}$,
\item[(2)] $Q_{2} = \left\{\alpha_{p}\alpha_{j} : 0\leq j \leq \frac{q -1}{2}\right\}$, and
\item[(3)] $Q_{3} = \left\{\alpha_{q}\alpha_{k} : 0\leq k \leq \frac{p -1}{2}\right\}$.
\end{itemize}

While this choice is not of the type discussed in Section 6.1, it is more convenient in terms of the geometry. Let us denote by $\mathbb{X}^{\text{*}}_{p}$ the closed set obtained from  $\mathbb{X}_{p}$ by removing $\mF$ (see \eqref{e380}); it is symmetric relative to $\sigma$. The closed domain
\[
\mathbb{X}_{pq} := \mF\ \ \cup  \bigcup_{\alpha \in Q_{1}} \alpha\mF \ \cup\ \alpha_{p}\mathbb{X}_{q}^{\text{*}}\ \cup\ \alpha_{q}\mathbb{X}_{p}^{\text{*}},
 \]
is admissible so that we let $\widetilde{\mathbb{X}_{pq}}$  denote an associated fundamental domain of $\Gamma_{0}(pq)$, after identifications of sides. It has the four inequivalent cusps mentioned above. Here, the first two subdomains of $\mathbb{X}_{pq}$ in \eqref{Gammapq} are $\sigma$-symmetric while the latter two are not (though they do have a conjugated symmetry). As before, we are interested in the structure of the fixed set of $\sigma$ in $\mathbb{X}_{pq}$. One notes that since $-\frac{1}{p} \sim \frac{1}{p}$ and $-\frac{1}{q} \sim \frac{1}{q}$, all the cusps are in $\mathrm{ Fix}_{\Gamma_{0}(pq)}(\sigma;\widetilde{\mathbb{X}_{pq}})$ (see Section 6.1 for a discussion). For the rest, we now consider three cases with $z \in \mathrm{ Fix}_{\Gamma_{0}(pq)}(\sigma;\mathbb{X}_{pq})$:

\ 
\begin{itemize}[leftmargin=*]
\item[(i)] Suppose $z\in \mF\  \cup\  \bigcup_{\alpha \in Q_{1}} \alpha\mF$ such that $\sigma z \sim z \mod \Gamma_{0}(pq)$. The domain here has the structure of Figure 5 (see Fig. 7 as an example) and we use the notation there with $L = \frac{pq -1}{2}$. If $z$ is in the interior, then it must be on the imaginary axis. Hence if $z$ is on the boundary of $\mF$, it must be on $\delta_{2}\ \cup\ \sigma\delta_{2}$ since they are $\sigma$-fixed by $T^{\pm 1}\in \Gamma_{0}(pq)$. Next suppose $z \in \bigcup_{0\leq |l|\leq L} \alpha_{l}\mathcal{S}$, where $\mathcal{S}$ is the part of $\mF$ on the unit circle. This happens only for those $l$ with $\hat{\alpha}_{l}S\alpha^{-1} \in \Gamma_{0}(pq)$, so that $l^{2} \equiv 1\ ({\rm mod}\ pq)$. Apart from $l= \pm 1$, there are two other choices for $l$, which we denote by $\pm l^{\text{*}}$. These give us the arcs $\alpha_{\pm 1}\mathcal{S}\ \cup\ \alpha_{\pm l^{\text{*}}}\mathcal{S}$ = $\alpha_{1}\mathcal{S}\ \cup\ \sigma\alpha_{1}\mathcal{S}\ \cup\ \alpha_{l^{\text{*}}}\mathcal{S}\ \cup\ \sigma\alpha_{l^{\text{*}}}\mathcal{S}$. The arcs corresponding to $l = \pm 1$ are vertical line segments on the lines $x=\pm \frac{1}{2}$ and they connect with $\delta_{2}$ and  $\sigma\delta_{2}$ respectively. Finally, one checks the arcs coming from $\alpha_{\pm L}\mF$ containing the cusp $0$ (these being the only remaining arcs not in the interior). The arcs joining $0$ to $c_{L}$ and $0$ to $c_{-L-1}$ are fixed by $\left( \begin{smallmatrix} 1&0\\ \mp pq&1 \end{smallmatrix}\right )\in \Gamma_{0}(pq)$ respectively. Thus, $\mathrm{ Fix}_{\Gamma_{0}(pq)}(\sigma;\mF\  \cup\  \bigcup_{\alpha \in Q_{1}} \alpha\mF)$ is the union of the image $\alpha_{l^{\text{*}}}\mathcal{S}$ and a simple curve $\mathcal{C}_{1}$, consisting of the imaginary axis, the arc joining $0$ with $c_{L}$ and the vertical segment joining $c_{1}$ with the cusp at infinity, together with their $\sigma$-reflections (see Fig. 7).
\item[]
\item[(ii)] If $z\in \alpha_{p}\mathbb{X}_{q}^{\text{*}}$, then write $z=\alpha_{p}\alpha_{j}w$, with $w\in\mF$. Then, $\sigma z\sim z \mod \Gamma_{0}(pq)$ if and only if $\widehat{\alpha_{p}\alpha_{j}}\sigma w\sim \alpha_{p}\alpha_{j}w \mod \Gamma_{0}(pq)$, so that then $\sigma w \sim w \mod \Gamma(1)$. Thus $w\in \delta \cup \sigma\delta$. 

If $w\in \delta_{1}$, then we have  $\widehat{\alpha_{p}\alpha_{j}}(\alpha_{p}\alpha_{j})^{-1} \in \Gamma_{0}(pq)$  since $\sigma w =w$ (being an interior point of $\mF$). This happens only when $pj \equiv 1\ ({\rm mod}\ q)$, for which there is a unique $j$, which we denote by $p^{\text{*}}$. Thus $\alpha_{p}\alpha_{p^{\text{*}}}\delta_{1} \subset \mathrm{ Fix}_{\Gamma_{0}(pq)}(\sigma;\alpha_{p}\mathbb{X}_{q}^{\text{*}})$.

If $w\in \mathcal{S}$, we have $\sigma w = Sw$, so that $\widehat{\alpha_{p}\alpha_{j}}S(\alpha_{p}\alpha_{j})^{-1} \in \Gamma_{0}(pq)$. This is impossible since it implies $(pj -1)^{2}\equiv p^{2}\ ({\rm mod}\ pq)$.

If $w\in \delta_{2}$, we have $\sigma w = T^{-1}w$, so that $\widehat{\alpha_{p}\alpha_{j}}T^{-1}(\alpha_{p}\alpha_{j})^{-1} \in \Gamma_{0}(pq)$. This holds if $2p(pj -1)\equiv p^{2}\ ({\rm mod}\ pq)$, so that there is a unique $j_{1} \equiv p^{\text{*}} + \frac{q+1}{2}\ ({\rm mod}\ q)$, with the corresponding image  $\alpha_{p}\alpha_{j_{1}}\delta_{2}$  in $\mathrm{ Fix}_{\Gamma_{0}(pq)}(\sigma;\alpha_{p}\mathbb{X}_{q}^{\text{*}})$.

Finally, if $w\in \sigma\delta_{2}$, we repeat the argument above with $T^{-1}$ replaced by $T$ to find that $\alpha_{p}\alpha_{j_{2}}\sigma\delta_{2}$ is in $\mathrm{ Fix}_{\Gamma_{0}(pq)}(\sigma;\alpha_{p}\mathbb{X}_{q}^{\text{*}})$, with $j_{2}\equiv j_{1} -1\ ({\rm mod}\ q)$. One checks that $j_{1} \neq -\frac{q-1}{2}$, so that $j_{2} = j_{1}-1$. But then $\alpha_{p}\alpha_{j_{2}}\sigma\delta_{2}= \alpha_{p}\alpha_{j_{1}-1}\sigma\delta_{2} = \alpha_{p}\alpha_{j_{1}}T\sigma\delta_{2} = \alpha_{p}\alpha_{j_{1}}\delta_{2}$ as above.

\item[]
\item[(iii)] The analysis for $\alpha_{q}\mathbb{X}_{p}^{\text{*}}$ is identical to that above, with $q^{\text{*}}\ ({\rm mod}\ p)$ defined accordingly, and with the analogous values $k_{1}\ ({\rm mod}\ p)$. So  $\mathrm{ Fix}_{\Gamma_{0}(pq)}(\sigma;\alpha_{q}\mathbb{X}_{p}^{\text{*}})$ is $\alpha_{q}\alpha_{q^{\text{*}}}\delta_{1}\ \cup\ \alpha_{q}\alpha_{k_{1}}\delta_{2}$, where $k_{1} \equiv q^{\text{*}} + \frac{p+1}{2} \mod p$ and $qq^{\text{*}}\equiv 1\ ({\rm mod}\ p)$. Note that necessarily $pp^{\text{*}} + qq^{\text{*}}=1$, so that $p^{\text{*}}$ and $q^{\text{*}}$ have opposite signs.
\item[]
\end{itemize} 

Collecting the above together gives $\mathrm{ Fix}_{\Gamma_{0}(pq)}(\sigma,\mathbb{X}_{pq}) = \mathcal{C}_{1}\ \cup\ \sigma\mathcal{C}_{1}\ \cup\ \mathcal{C}_{2}$, where 
\[
\mathcal{C}_{2}= \alpha_{l^{\text{*}}}\mathcal{S}\ \cup\ \alpha_{-l^{\text{*}}}\mathcal{S}\ \cup\ \alpha_{p}\alpha_{p^{\text{*}}}\delta_{1}\ \cup\ \alpha_{q}\alpha_{q^{\text{*}}}\delta_{1}\ \cup\ \alpha_{p}\alpha_{j_{1}}\delta_{2}\ \cup\ \alpha_{q}\alpha_{k_{1}}\delta_{2}.
\]

To determine $\mathrm{ Fix}_{\Gamma_{0}(pq)}(\sigma,\widetilde{\mathbb{X}_{pq}})$ we have to remove any equivalent arcs from the above. The analysis in (i) shows that the arcs making up $\sigma\mathcal{C}_{1}$ are equivalent to the corresponding arcs in $\mathcal{C}_{1}$. Also note (as in the the previous section) that $c_{L} \sim c_{1}$. Next, one has $\alpha_{-l^{\text{*}}}S\alpha_{l^{\text{*}}}^{-1} \in \Gamma_{0}(pq)$, so that $\alpha_{-l^{\text{*}}}\mathcal{S}$ is equivalent to $\alpha_{l^{\text{*}}}\mathcal{S}$. Among these two, we keep the one whose subscript is congruent to $1 \mod p$ and $-1 \mod q$; we will denote this choice by $l^{\text{*}}$. One now checks that the remaining arcs are not equivalent except possibly at endpoints. Checking the endpoints, using $X:=\alpha_{p}\alpha_{p^{\text{*}}}S(\alpha_{q}\alpha_{q^{\text{*}}})^{-1} = \left( \begin{smallmatrix} q-pp{^{\text{*}}}^{2}&-(1+p^{\text{*}}q^{\text{*}})\\ pq(1+p^{\text{*}}q^{\text{*}})&qq{^{\text{*}}}^{2} -p \end{smallmatrix}\right )\in\Gamma_{0}(pq)$, one sees that $\alpha_{p}\alpha_{p^{\text{*}}}i \sim \alpha_{q}\alpha_{q^{\text{*}}}i \mod \Gamma_{0}(pq)$ so that $\alpha_{p}\alpha_{p^{\text{*}}}\delta_{1}$ and  $\alpha_{q}\alpha_{q^{\text{*}}}\delta_{1}$ have a common point in $\widetilde{\mathbb{X}_{pq}}$. Similarly, using the choice of $l^{\text{*}}$ above, and the definitions of $j_{1}$ and $k_{1}$, one shows  that $\alpha_{l^{\text{*}}}\mathcal{S}$ and $\alpha_{p}\alpha_{j_{1}}\delta_{2}$ have  equivalent points at $\alpha_{l^{\text{*}}}\rho \sim \alpha_{p}\alpha_{j_{1}}\rho$, where the element $Y:= \alpha_{l^{\text{*}}}\alpha_{1}^{-1}(\alpha_{p}\alpha_{j_{1}})^{-1} \in \Gamma_{0}(pq)$ maps one to the other; and $\alpha_{l^{\text{*}}}\mathcal{S}$ and $\alpha_{q}\alpha_{k_{1}}\delta_{2}$ have equivalent points at $\alpha_{l^{\text{*}}}\sigma\rho \sim \alpha_{q}\alpha_{k_{1}}\rho$ where $Z:=\alpha_{l^{\text{*}}}T^{-1}\alpha_{1}^{-1}(\alpha_{q}\alpha_{k_{1}})^{-1} \in \Gamma_{0}(pq)$ is the map (thus $\alpha_{l^{\text{*}}}\mathcal{S}$ bridges the other two arcs in $\widetilde{\mathbb{X}_{pq}}$ as in Fig. 8). We thus have schematically the following closed curve $\mathcal{C}_{3}$ with connecting equivalent points shown and the arcs indicated by arrows:
\begin{equation}\label{e556}
\begin{split}
\left(\frac{1}{p}\right) \xrightarrow{\ \ (1)\ \ } \left\{\substack{\alpha_{p}\alpha_{p^{\text{*}}}(i) \\ \alpha_{q}\alpha_{q^{\text{*}}}(i)}\right\}_{X} \xrightarrow{\ \ (2)\ \ } \left(\frac{1}{q}\right) \xrightarrow{\ \ (3)\ \ } \left\{\substack{\alpha_{q}\alpha_{k_{1}}(\rho) \\  \alpha_{l^{\text{*}}}\sigma(\rho)}\right\}_{Z} \quad\quad \\  \xrightarrow{\ \ (4)\ \ } \left\{\substack{\alpha_{l^{\text{*}}}(\rho)\\  \alpha_{p}\alpha_{j_{1}}(\rho) }\right\}_{Y} \xrightarrow{\ \ (5)\ \ } \left(\frac{1}{p}\right).
\end{split}
\end{equation}

We have

\begin{lemma}\label{C}\ 
Let  $p$ and $q$ be distinct odd primes.
\begin{itemize} 
\item[(i)]  $\mathrm{ Fix}_{\Gamma_{0}(pq)}(\sigma,\widetilde{\mathbb{X}_{pq}})$ is  a disjoint union $\mathcal{C}_{1}\cup\mathcal{C}_{3}$ of two closed simple curves, each containing a pair of cusps.
\item[]
\item[(ii)] On $\mathbb{H}$, $\mathrm{ Fix}_{\Gamma_{0}(pq)}(\sigma,\widetilde{\mathbb{X}_{pq}})$ can be realised as the union of two pairs of non-intersecting geodesics with rational endpoints, with each pair having $\Gamma_{0}(pq)$-equivalent endpoints.
\end{itemize}
\end{lemma}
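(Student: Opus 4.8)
The plan is to analyze the combinatorial structure already assembled in items (i)--(iii) above and in the schematic \eqref{e556}, and to extract from it two pieces of information: that $\mathcal{C}_1$ and $\mathcal{C}_3$ are each simple closed curves, that they are disjoint, and that their union realizes $\mathrm{Fix}_{\Gamma_0(pq)}(\sigma;\widetilde{\mathbb{X}_{pq}})$; and then to trace the underlying geodesic arcs back up to $\mathbb{H}$ to obtain the description in (ii). For part (i), the starting point is the identity $\mathrm{Fix}_{\Gamma_0(pq)}(\sigma,\mathbb{X}_{pq}) = \mathcal{C}_1\cup\sigma\mathcal{C}_1\cup\mathcal{C}_2$ established above. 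Passing to $\widetilde{\mathbb{X}_{pq}}$ means quotienting by the side identifications: the analysis in (i) shows the arcs of $\sigma\mathcal{C}_1$ are $\Gamma_0(pq)$-equivalent to those of $\mathcal{C}_1$ (and $c_L\sim c_1$ closes $\mathcal{C}_1$ up), so $\sigma\mathcal{C}_1$ contributes nothing new, while the equivalences $\alpha_{-l^{*}}\mathcal{S}\sim\alpha_{l^{*}}\mathcal{S}$ (via $\alpha_{-l^{*}}S\alpha_{l^{*}}^{-1}\in\Gamma_0(pq)$) and the endpoint identifications via the explicit elements $X$, $Y$, $Z$ glue the six arcs of $\mathcal{C}_2$ into the single cycle $\mathcal{C}_3$ of \eqref{e556}. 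One then checks that $\mathcal{C}_1$ and $\mathcal{C}_3$ share no points in $\widetilde{\mathbb{X}_{pq}}$: the cusps appearing on $\mathcal{C}_1$ are $0$ and $\infty$, those on $\mathcal{C}_3$ are $1/p$ and $1/q$, all four inequivalent, and the interior arcs of $\mathcal{C}_3$ live in translates $\alpha_p\mathbb{X}_q^{*}$, $\alpha_q\mathbb{X}_p^{*}$ and in $\alpha_{\pm l^{*}}\mathcal{S}$ with $2\le l^{*}$, which are disjoint (modulo $\Gamma_0(pq)$) from the vertical axis and the $\delta_2$-type arcs making up $\mathcal{C}_1$; this is exactly the kind of congruence bookkeeping already carried out in (i)--(iii). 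Simplicity of each curve follows because in \eqref{e556} every vertex has exactly two incident arcs (the element $X$ identifies precisely two $\delta_1$-arcs, $Y$ and $Z$ precisely two arcs each at the $\rho$-vertices of $\alpha_{l^{*}}\mathcal{S}$), so $\mathcal{C}_3$ is a single embedded cycle; for $\mathcal{C}_1$ this is the content of Lemma~\ref{B}(i),(ii) transplanted verbatim to the $pq$ setting.

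For part (ii), I would lift each constituent geodesic arc of $\mathcal{C}_1$ and $\mathcal{C}_3$ to $\mathbb{H}$ and identify the complete geodesic it lies on. The arcs of $\mathcal{C}_1$ are, as in Lemma~\ref{B}(iii), segments of $\Re(z)=0$ and $\Re(z)=\tfrac12$, and the element $\left(\begin{smallmatrix} L & -1\\ pq & -2\end{smallmatrix}\right)\in\Gamma_0(pq)$ (the obvious $pq$-analogue of the element used there) identifies the endpoint $0$ of one with the endpoint $\tfrac12$ of the other; so $\mathcal{C}_1$ comes from one pair of non-intersecting geodesics with rational endpoints and $\Gamma_0(pq)$-equivalent endpoints. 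For $\mathcal{C}_3$ one does the same with the six arcs: $\alpha_{l^{*}}\mathcal{S}$ is the image under $\alpha_{l^{*}}$ of the unit-circle arc, hence lies on the geodesic $\alpha_{l^{*}}\{|z|=1\}$ joining the rational points $\alpha_{l^{*}}(\pm1)$; $\alpha_p\alpha_{p^{*}}\delta_1$ and $\alpha_q\alpha_{q^{*}}\delta_1$ lie on the images of the imaginary axis under $\alpha_p\alpha_{p^{*}}$ and $\alpha_q\alpha_{q^{*}}$; and so on. The explicit matrices $X$, $Y$, $Z$ realizing the endpoint identifications show these arcs fit together, two at a time, into (at most) two complete geodesics with rational endpoints, the two geodesics being joined to each other only through $\Gamma_0(pq)$-equivalent endpoints. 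Collecting the two pairs (one from $\mathcal{C}_1$, one from $\mathcal{C}_3$) gives the statement.

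The main obstacle is purely bookkeeping rather than conceptual: one must verify that the six arcs listed in $\mathcal{C}_2$ (equivalently $\mathcal{C}_3$) are pairwise inequivalent in their interiors and meet only at the endpoints dictated by \eqref{e556} — in particular that there are no accidental coincidences between an $\alpha_{l^{*}}\mathcal{S}$ arc and a $\delta_2$-translate, or between the two $\delta_1$-translates, beyond the ones recorded by $X$, $Y$, $Z$. Each such check amounts to asking whether a specific product of $\alpha$'s (times $S$ or $T^{\pm1}$) lies in $\Gamma_0(pq)$, i.e. to a congruence condition modulo $pq$; these were already set up in items (ii)--(iii), and the remaining work is to confirm that no further solutions occur in the relevant ranges $0\le j\le\tfrac{q-1}{2}$, $0\le k\le\tfrac{p-1}{2}$, using $pp^{*}+qq^{*}=1$ and the opposite-sign observation recorded at the end of (iii). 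I expect this to go through cleanly but to require care with the case $l^{*}=\pm1$ versus $l^{*}\ne\pm1$ and with the identification of $j_2=j_1-1$ already noted above; once that is done, parts (i) and (ii) follow immediately from the schematic \eqref{e556}.
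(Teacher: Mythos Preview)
Your proposal is correct and follows essentially the same route as the paper: part~(i) is taken as established by the analysis preceding the lemma (the identifications via $X$, $Y$, $Z$ and the observation $\sigma\mathcal{C}_1\sim\mathcal{C}_1$), and part~(ii) is obtained by transporting the arcs of \eqref{e556} via $X^{-1}$, $Y$, $Z$ onto two explicit full geodesics. The paper's write-up is more concrete than your sketch: it applies $X^{-1}$ to arc~(1) so that (1)+(2) form the geodesic $\mathcal{E}_1$ with endpoints $\tfrac{1}{q}$ and $\tfrac{q^{*}}{qq^{*}-1}$, and applies $Z$, $Y$ to arcs (3), (5) so that (3)+(4)+(5) form the geodesic $\mathcal{E}_2$ with endpoints $\tfrac{1}{l^{*}+1}$ and $\tfrac{1}{l^{*}-1}$, then reads off the endpoint equivalences from $Y$ and $YX$ --- so the split is $2+3$ rather than your ``two at a time,'' but the mechanism is the one you describe.
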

\begin{proof} As discussed in Lemma \ref{B}, the curve $\mathcal{C}_{1}$ is piecewise equivalent to the two vertical lines $\Re(z)=0$ and $\Re(z)=\half$ with $0 \sim \half$. 

For the other curve in $\mathrm{ Fix}_{\Gamma_{0}(pq)}(\sigma,\widetilde{\mathbb{X}_{pq}})$ given in \eqref{e556}, we first consider the arcs denoted by $(1)$ and $(2)$; we may assume $p<q$.  The element $X^{-1}=\alpha_{q}\alpha_{q^{\text{*}}}S(\alpha_{p}\alpha_{p^{\text{*}}})^{-1}\in \Gamma_{0}(pq)$ maps the geodesic arc $(1)$ connecting $\frac{1}{p}$ and the point $\alpha_{p}\alpha_{p^{\text{*}}}(i)$ in \eqref{e556} to the new geodesic arc  connecting $\frac{q*}{qq^{\text{*}} -1}$ and $\alpha_{q}\alpha_{q^{\text{*}}}(i)$; together with the geodesic arc (2) connecting $\alpha_{q}\alpha_{q^{\text{*}}}(i)$ and $\frac{1}{q}$ in \eqref{e556} this gives the full geodesic $\mathcal{E}_{1}$ having $\frac{1}{q}$ and $\frac{q*}{qq^{\text{*}} -1}$ as endpoints since $\alpha_{q}\alpha_{q^{\text{*}}}(0)= \frac{q*}{qq^{\text{*}} -1}$ and $\alpha_{q}\alpha_{q^{\text{*}}}(\infty)=\frac{1}{q}$. Note that if $p^{\text{*}}<0<q^{\text{*}}$, the new endpoint $\frac{q*}{qq^{\text{*}} -1}$ is between 0 and $\frac{1}{q}$, while if  $q^{\text{*}}<0<p^{\text{*}}$, then it lies between $\frac{1}{q}$ and $\frac{1}{p}$. Thus the curves $(1)$ and $(2)$ are piecewise $\Gamma_{0}(pq)$-equivalent to the full geodesic $\mathcal{E}_{1}$. 

Next, for the arcs $(3)$, $(4)$ and $(5)$, we see that $Z$ maps $\frac{1}{q}$ to $\alpha_{l^{\text{*}}}(-1)=\frac{1}{l^{\text{*}}+1}$ so that the arc of $\alpha_{l^{\text{*}}}\mathcal{S}$ joining $\alpha_{l^{\text{*}}}(-1)$ with $\alpha_{l^{\text{*}}}\sigma\rho$ is the image of the arc $(3)$ in \eqref{e556} under $Z$. Similarly, $Y$ maps $\frac{1}{p}$ to $\alpha_{l^{\text{*}}}(1)= \frac{1}{l^{\text{*}}-1}$ ( note that by definition, $l^{\text{*}}\neq \pm 1$), so that the arc of $\alpha_{l^{\text{*}}}\mathcal{S}$ joining $\alpha_{l^{\text{*}}}(1)$ with $\alpha_{l^{\text{*}}}\rho$ is the image of the arc $(5)$ in \eqref{e556} under $Y$. Then together with the arc $(4)$ we get the geodesic $\mathcal{E}_{2}$with end points $\frac{1}{l^{\text{*}}+1}$ and $\frac{1}{l^{\text{*}}-1}$ and the curves $(3)$, $(4)$ and $(5)$ are piecewise equivalent to it.

We see that $\mathcal{C}_{3}$ is piecewise equivalent to the geodesics $\mathcal{E}_{1}\cup\mathcal{E}_{2}$. Comparing endpoints, we have from the above that $Y(\frac{1}{p})= \frac{1}{l^{\text{*}}-1}$ and $YX(\frac{q^{\text{*}}}{qq^{\text{*}}-1})=\frac{1}{l^{\text{*}}-1}$, so that the endpoints are pairwise equivalent.

\end{proof}
We illustrate the analysis above by considering $\Gamma_{0}(15)$, which is a subgroup of index 24, with the associated Riemann surface of genus 1. In Figure 7 we see the closed curve $\mathcal{C}_{1}$ with the points $c_{7}$ and $c_{1}$ identified. Figure 8 shows the closed curve $\mathcal{C}_{3}$, where `a' is $\alpha_{5}\alpha_{5^{\text{*}}}(i)$, and is identified with `d' $=\alpha_{3}\alpha_{3^{\text{*}}}(i)$. Here $p=3$, $q=5$, $3^{\text{*}} = 2$, $5^{\text{*}}= -1$ and $l^{\text{*}}=4$. This gives us $j_{1}= 0$ and $k_{1}=1$. Then, `b' is $ \alpha_{5}\alpha_{1}(\rho)$ and `c' is $\alpha_{3}\alpha_{0}(\rho)$. The bridge is the curve with endpoints `b' and `c'. One sees that the curve $\mathcal{E}_{2}$ is the geodesic joining $\frac{1}{5}$ and $\frac{1}{3}$. The other geodesic $\mathcal{E}_{1}$ joins $\frac{1}{6}$ and $\frac{1}{5}$.
\begin{figure}[!ht]
  \begin{center}
    \includegraphics[width=0.7\linewidth]{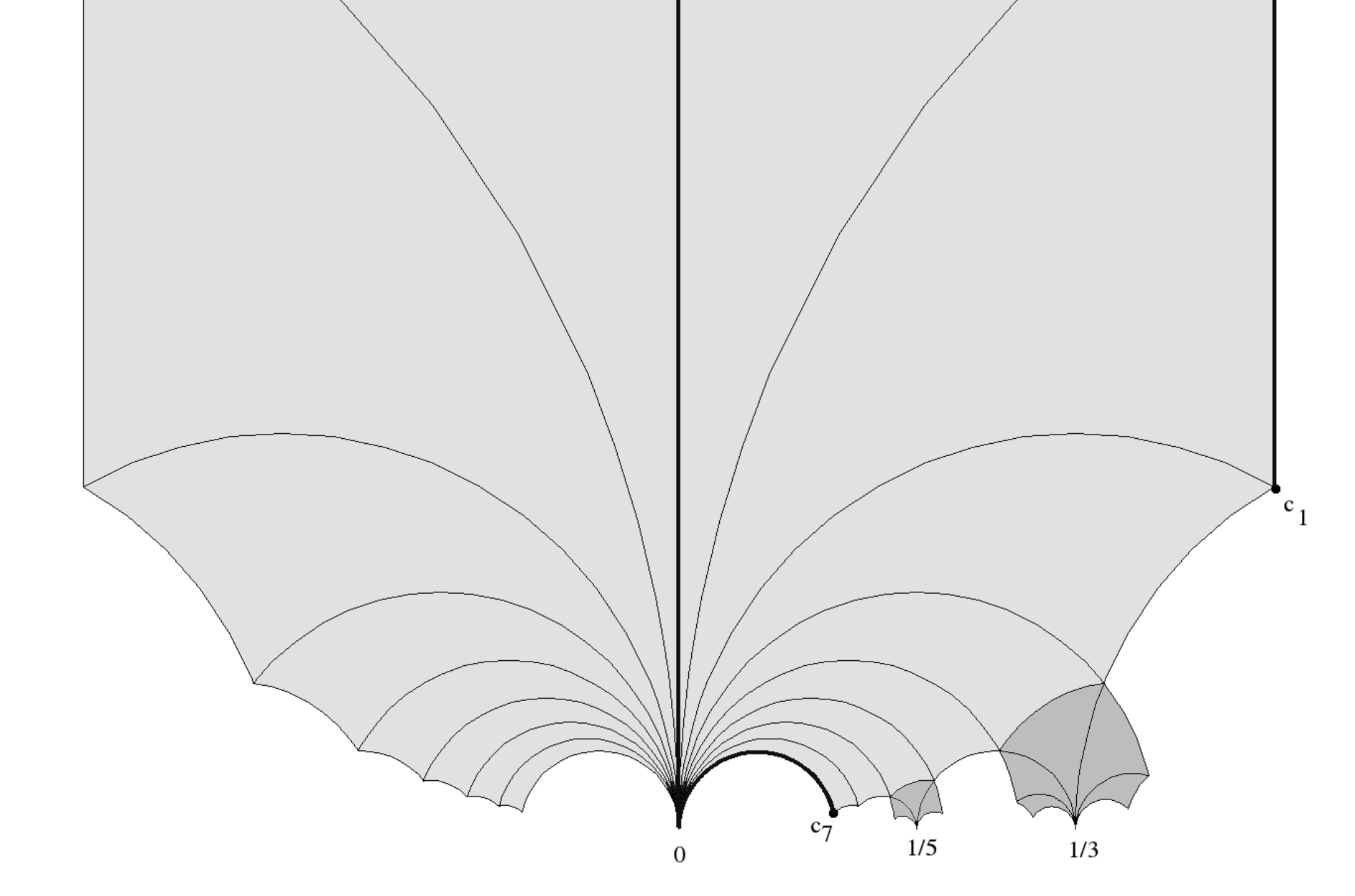}
  \end{center}
  \caption{}
\end{figure}

\begin{figure}[!ht]
  \begin{center}
\includegraphics[width=0.7\linewidth]{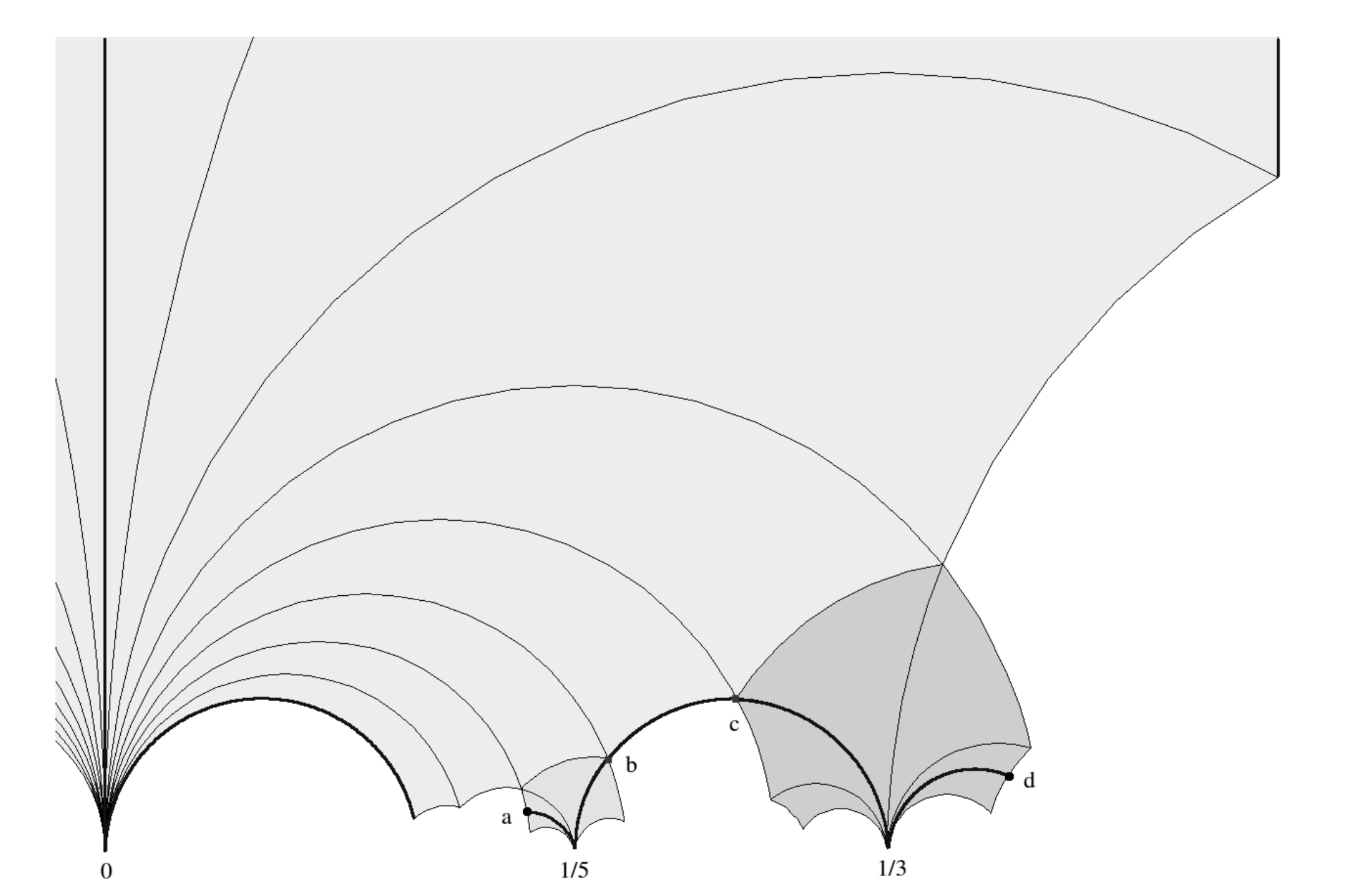}
  \end{center}
  \caption{}
\end{figure}

\subsection{Symmetric fundamental domains: the cocompact case}\

We consider anisotropic indefinite ternary quadratic forms for which the integral automorphism group or a suitable subgroup is arithmetic. Associated with the chosen group is a cocompact arithmetic subgroup of $\PSL(2,\mathbb{R})$ , giving us the compact surface $\mathbb{Y}$ in $\mathbb{H}$. By  choosing the forms suitably, we ensure that $\mathbb{Y}$ has a reflection symmetry $\sigma$ for which the fixed set is composed of closed geodesics in $\mathbb{H}$.

We first give some preliminaries: let $f(x_{1},x_{2},x_{3})= -a_{1}x_{1}^{2}+ a_{2}x_{2}^{2}+a_{3}x_{3}^{2}$ with $a_{i}\in \mathbb{N}$ be an integral anisotropic quadratic form. Associated to $f$ is the two-sheeted hyperboloid given by $f(\mathbf{x})=-k$ where $k>0$ is fixed.  Define the inner-product $({\vec x},{\vec y}) = -a_{1}x_{1}y_{1}+a_{2}x_{2}y_{2}+a_{3}x_{3}y_{3}$ and choose a sheet, say with $x_{1}>0$ and denote it by $H^{\text{+}}$, and the other sheet by $H^{-}$.  Let $O_{f}(\mathbb{R})$ be the real othogonal group of $f$, the group of  $3\times 3$ real matrices whose action fixes $f$.  An element of $O_{f}$ either maps $H^{\text{+}}$ to itself or otherwise to $H^{-}$.  We denote the subgroup (of index two) that maps $H^{\text{+}}$  to itself by $O_{f}^{\text{+}}$, and let $SO_{f}^{\text{+}}$  be those elements with determinant one.  Then $SO_{f}^{\text{+}}(\mathbb{R})$ acts on this sheet as a group of isometries and the  integral automorphisms $SO_{f}^{\text{+}}(\mathbb{Z})$  of $f$ act as a discrete subgroup on this space. Vinberg's algorithm \cite{Vinberg} is used to determine a polygonal fundamental domain for  the subgroup $R_{f}$ in $SO_{f}^{\text{+}}(\mathbb{Z})$ generated by all reflections .  The integrality conditions imposed above are used in Vinberg's algorithm to obtain {\it root vectors} which in turn determine hyperplanes that cut the hyperboloid sheet to give the sides of the polygon. The process of finding the root vectors is algorithmic and involves solving some diophantine inequalities determined by the form $f$; the process always terminates. By mapping $H^{\text{+}}$ to $\mathbb{H}$ one then creates a discrete reflection group $\hat{\Gamma}$ in $\PSL(2,\mathbb{R})$ together with a fundamental domain. From $\hat{\Gamma}$, we construct our discrete groups of interest $\Gamma$ in $\PSL(2,\mathbb{R})$ with $\sigma$-symmetric fundamental domains.  For suitable $f$, the reflective subgroup $R_{f}$ is of finite index in $SO_{f}^{\text{+}}(\mathbb{Z})$ so that a fundamental domain for  $SO_{f}^{\text{+}}(\mathbb{Z})$ can also be obtained. Since $f$ is anisotropic, all the fundamental domains obtained are compact.    By general principles relating $SO_{f}^{\text{+}}$ to quaternion division algebras one can create an arithmetic cocompact  subgroup $\Gamma_{f}$ in $\PSL(2,\mathbb{R})$, obtained from $SO_{f}^{\text{+}}(\mathbb{Z})$  by a suitable representation. In the explicit constructions below, one finds that $\Gamma_{f}=\Gamma$, giving us arithmetic cocompact groups with $\sigma$-symmetric fundamental domains. We then analyse the fixed set for these fundamental domains.

For $p \equiv 3\ ({\rm mod}\ 4)$ a prime number, consider the anisotropic form $f(x_{1},x_{2},x_{3}) = -px_{1}^{2}+ x_{2}^{2}+x_{3}^{2}$ (or the form  $f(x_{1},x_{2},x_{3})= -x_{1}^{2}+ px_{2}^{2}+px_{3}^{2}$, for which the analysis is similar).   The Lie group $SO_{f}(\mathbb{R})$ is a linear algebraic group over $\mathbb{Q}$ and  has two connected components. There is  an isomorphism between $\PSL(2,\mathbb{R})$ and $SO_{f}^{\text{+}}(\mathbb{R})$  the connected 1-component, given as follows:  to each $\left( \begin{smallmatrix} \alpha&\beta\\ \gamma&\delta \end{smallmatrix}\right ) \in \PSL(2,\mathbb{R})$  is associated the matrix 

\[
\left(\begin{matrix} \frac{1}{2}(\alpha^{2}+\beta^{2}+\gamma^{2}+\delta^{2})&\frac{1}{\sqrt{p}}(\alpha\beta +\gamma\delta)&\frac{1}{2\sqrt{p}}(\alpha^{2}-\beta^{2}+\gamma^{2}-\delta^{2})\\\sqrt{p}(\alpha\gamma+\beta\delta)&\alpha\delta+\beta\gamma&\alpha\gamma-\beta\delta\\ \frac{\sqrt{p}}{2}(\alpha^{2}+\beta^{2}-\gamma^{2}-\delta^{2})&\alpha\beta-\gamma\delta&\frac{1}{2}(\alpha^{2}-\beta^{2}-\gamma^{2}+\delta^{2})\end{matrix}\right),
\]
in $SO_{f}^{\text{+}}(\mathbb{R})$ (see \cite{FrickeKlein}) Sec 3, Chap.2 ; \cite{Plesken}). Restricting the latter to $SO_{f}^{\text{+}}(\mathbb{Z})$ gives rise to a discrete cocompact  subgroup $\Gamma_{f}$ of $\PSL(2,\mathbb{R})$ given by $\Gamma_{f} = \Gamma_{f}^{\text{*}}\  \cup\  U\Gamma_{f}^{\text{*}}$, where $\Gamma_{f}^{\text{*}}$ is the subgroup of $\PSL(2,\mathbb{R})$ consisting of all matrices of the type
\begin{equation}\label{e613}
\left( \begin{matrix} \alpha&\beta\\ \gamma&\delta \end{matrix}\right ) = \left( \begin{matrix} \frac{a+\sqrt{p}b}{2}&\frac{c+\sqrt{p}d}{2}\\\frac{-c+\sqrt{p}d}{2}&\frac{a-\sqrt{p}b}{2}\end{matrix}\right)\quad \mathrm{ or} \quad \left( \begin{matrix} \frac{a+\sqrt{p}b}{2}&\frac{c+\sqrt{p}d}{2}\\\frac{c-\sqrt{p}d}{2}&\frac{-a+\sqrt{p}b}{2}\end{matrix}\right),
\end{equation}
where $a,b,c$ and $d$ are integers all of the same parity with $a^{2}-pb^{2}+c^{2}-pd^{2} = \pm 4$ respectively, and $U= \left( \begin{smallmatrix} \frac{1}{\sqrt{2}}&\frac{1}{\sqrt{2}}\\ -\frac{1}{\sqrt{2}}&\frac{1}{\sqrt{2}} \end{smallmatrix}\right )$. Fix integers $\eps_{1}$, $\eps_{2}$ both odd such that $ \eps_{1}^{2}+  \eps_{2}^{2} = 2(p-2)$. Let $\Gamma_{f}^{\text{**}}$ be the subgroup of $\Gamma_{f}^{\text{*}}$ consisting of those elements in \eqref{e613} of the first kind. Then $\Gamma_{f}^{\text{**}}$ is of index two in $\Gamma_{f}^{\text{*}}$ with $\Gamma_{f}^{\text{*}}=\Gamma_{f}^{\text{**}}\sqcup \eps\Gamma_{f}^{\text{**}}$, where $\eps = \left( \begin{smallmatrix} \half(\eps_{1}+\sqrt{p})&\half(\eps_{2}+\sqrt{p})\\\half(\eps_{2}-\sqrt{p})&\half(-\eps_{1}+\sqrt{p})\end{smallmatrix} \right)$

Let $G_{f}$ be the spin double cover of $SO_{f}(\mathbb{R})$ defined over $\mathbb{Q}$; it is isomorphic to $SO_{f}^{\text{+}}(\mathbb{R})$. Let $D_{f}$ denote the quaternion division algebra over $\mathbb{Q}$ containing elements ${\bf u}= u_{0}+u_{1}E_{1}+u_{2}E_{2}+u_{3}E_{3}$, where $E_{1}^{2}=-1$ and $E_{2}^{2}=E_{3}^{2}=p$. Then $G_{f}$ is composed of  the elements of norm 1 in $D_{f}$, namely those ${\bf u}$ with $N({\bf u})= u_{0}^{2}+ u_{1}^{2}-p u_{3}^{2}-p u_{0}^{2}=1$. Let $G_{f}(\mathbb{Z})$ consist of the elements of $G_{f}$ with $u_{i}$ integers (the Litschitz integral quaternions of norm one).
Let 
\[
\mathcal{E}_{1}=\left( \begin{matrix} 0&-1\\1&0\end{matrix}\right); \quad \mathcal{E}_{2}=\left( \begin{matrix} 0&\sqrt{p}\\\sqrt{p}&0\end{matrix}\right) \quad \text{and} \quad \mathcal{E}_{3}=\left( \begin{matrix} -\sqrt{p}&0\\0&\sqrt{p}\end{matrix}\right).
\]
To each ${\bf u}$ is associated the matrix $M({\bf u})=u_{0}I + u_{1}\mathcal{E}_{1} +u_{2}\mathcal{E}_{2} +u_{3}\mathcal{E}_{3}$ in $\PSL(2,\mathbb{R})$. Then, $G_{f}(\mathbb{Z})$ is mapped to the subgroup $\Gamma_{f}^{\text{**}}(+)$ of $\Gamma_{f}^{\text{**}}$ with $a,b,c$ and $d$ all even, so that $\Gamma_{f}^{\text{**}}(+)$  is arithmetic.  Now fix integers $\eps_{3}$, $\eps_{4}$ both odd such that $ \eps_{3}^{2}+  \eps_{4}^{2} = 2(p+2)$and let $\eta = \left( \begin{smallmatrix} \half(\eps_{3}+\sqrt{p})&\half(\eps_{4}+\sqrt{p})\\\half(-\eps_{4}+\sqrt{p})&\half(\eps_{3}-\sqrt{p})\end{smallmatrix} \right)$. Then, $\Gamma_{f}^{\text{**}}(+)$ is of index two in $\Gamma_{f}^{\text{**}}$ since $\Gamma_{f}^{\text{**}}= \Gamma_{f}^{\text{**}}(+)\  \sqcup \ \eta\Gamma_{f}^{\text{**}}(+)$. Thus $\Gamma_{f}^{\text{**}}$, and so $\Gamma_{f}$, are all arithmetic (note that $\Gamma_{f}^{\text{**}}$ is associated with the Hurwitz integral quaternions of norm one).
\subsubsection{The form ${\bf f(x_{1},x_{2},x_{3})= -3x_{1}^{2}+x_{2}^{2}+x_{3}^{2}\ }$}\

By Vinberg's algorithm, there are three root vectors $\vec{v}_{1}=<0,0,-1>$, $\vec{v}_{2}=<0,-1,1>$ and $\vec{v}_{3}=<1,3,0>$, giving rise to three hyperplanes in $\mathbb{R}^{3}$ namely $H_{1}=\{\vec{x}: x_{3}=0\}$, $H_{2}=\{\vec{x}: x_{2}-x_{3}=0\}$ and $H_{3}=\{\vec{x}: -x_{1}+ x_{2}=0\}$ respectively. Putting $x=\frac{x_{3}}{\sqrt{3}x_{1}}$ and $y=\frac{x_{2}}{\sqrt{3}x_{1}}$ maps the sheet $({\vec x},{\vec x})=-k$ with $x_{1}>0$ to the interior of the Klein unit disc $x^{2}+y^{2}=1-\frac{k}{3x_{1}^{2}}<1$. Under this map, the three hyperplanes give rise to three line segments in the unit disc namely $(1)$ $x=0$, \ $(2)$ $x=y$ \ and \ $(3)$ $y=\frac{1}{\sqrt{3}}$\ with corresponding endpoints on the boundary $(0,\pm1)$, \ $\pm(\frac{1}{\sqrt{2}},\frac{1}{\sqrt{2}})$ \ and \ $(\pm\sqrt{\frac{2}{3}},\frac{1}{3})$ \ respectively. Putting $r=\frac{1+\sqrt{3}}{\sqrt{2}}$ and mapping these to $\mathbb{H}^{2}$ gives us the three geodesics
\[
l_{\mathfrak{a}}: x=0\ ;\quad\quad l_{\mathfrak{b}}: (x-1)^{2}+y^{2}=2 \quad\quad \mathrm{ and}\quad\quad l_{\mathfrak{c}}: x^{2}+y^{2}= r^{2},
\]
whose intersection gives us the $(2,4,6)$-triangle as a fundamental domain for the associated (arithmetic) triangle group. We denote the endpoints of $l_{\mathfrak{b}}$ by $s=1+\sqrt{2}$ and $s'=1-\sqrt{2}$, while those of $l_{\mathfrak{c}}$ are $\pm r$.

The three reflections associated with the geodesics are respectively,
\[
R_{\mathfrak{a}}(z) = \overline{z}\ ; \quad\quad R_{\mathfrak{b}}(z) = \frac{\overline{z}+1}{\overline{z}-1} \quad \mathrm{ and} \quad R_{\mathfrak{c}}(z)=\frac{r^{2}}{\overline{z}}.
\]
Let $\hat{\Gamma}$ be the reflection group generated by $R_{\mathfrak{a}}$, $R_{\mathfrak{b}}$ and $R_{\mathfrak{c}}$. Put
\begin{equation}\label{e640}
U=R_{\mathfrak{a}}R_{\mathfrak{b}} = \left( \begin{smallmatrix} \frac{1}{\sqrt{2}}&\frac{1}{\sqrt{2}}\\ -\frac{1}{\sqrt{2}}&\frac{1}{\sqrt{2}} \end{smallmatrix}\right ) \quad\quad \mathrm{ and}\quad\quad V_{r}=R_{\mathfrak{a}}R_{\mathfrak{c}} = \left( \begin{smallmatrix} 0&-r\\ \frac{1}{r}&0 \end{smallmatrix}\right )\ .
\end{equation}
 We see that $R_{\mathfrak{b}}R_{\mathfrak{a}}=U^{-1}$, $R_{\mathfrak{c}}R_{\mathfrak{a}}=V_{r}^{-1}$, $R_{\mathfrak{b}}R_{\mathfrak{c}}=U^{-1}V_{r} = (R_{\mathfrak{c}}R_{\mathfrak{b}})^{-1}$\ . Let $\Gamma \subset \PSLR$ be the subgroup of $\hat{\Gamma}$ consisting of words of even length. Then, $\Gamma$ is generated by $U$ and $V_{r}$ and $\sigma$ is in the normaliser of $\Gamma$ since $\sigma U\sigma=U^{-1}$ and $\sigma V_{r}\sigma=V_{r}$. Moreover, if $\gamma\in \hat{\Gamma}$ is a word of odd length, then $R_{\mathfrak{a}}\gamma$ has even length so that $\gamma\in R_{\mathfrak{a}}\Gamma$. Thus, $\hat{\Gamma} = \Gamma \sqcup R_{\mathfrak{a}}\Gamma$, with $\Gamma$ a subgroup of $\hat{\Gamma}$ of index two (note that $\Gamma=\hat{\Gamma}\cap\PSLR$).  Since $\Gamma_{f}$ in \eqref{e613} is generated by $U$ and $V_{r}$\ (see \cite{Magnus}, page 127), it follows that $\Gamma$ is a cocompact arithmetic subgroup of $\PSL(2,\mathbb{R})$.

A fundamental domain for $\Gamma$ can be obtained from that of $\hat{\Gamma}$ by reflection on the imaginary axis, giving us a $(2,6,6)$-triangle $\mathfrak{F}$. We denote the segments $l_{*}\cap\mathfrak{F}$ by their respective subscripts (see Figure 9), and their $\sigma$-reflections with a dash. Then, we have the equivalence of the sides $\mathfrak{b}\sim \mathfrak{b}'$ using $U$ and $\mathfrak{c}\sim \mathfrak{c}'$ using $V_{r}$, giving us a surface of genus zero. Moreover this implies that $\mathrm{ Fix}_{\Gamma}(\sigma,\mathfrak{F}) =  \mathfrak{a}\cup \mathfrak{b} \cup \mathfrak{c}$. Since $V_{r}(i0)=i\infty$ and  $U^{2}(s)= s'$, we see that the imaginary axis and $l_{\mathfrak{b}}$ are closed geodesics. Next, putting  $P:= (V_{r}U)^{-3}= \left( \begin{smallmatrix} 1&-\sqrt{2}r\\ \frac{\sqrt{2}}{r}&-1 \end{smallmatrix}\right )$, we see that the same is true for $l_{\mathfrak{c}}$ since $P(r)=-r$. We now show that all these geodesics are mapped modulo $\Gamma$ piecewise onto $\mathfrak{a}$, $\mathfrak{b}$ and $\mathfrak{c}$ respectively.

For any number $\tau>0$ we can define $V_{\tau}$ by replacing $r$ with $\tau$ in \eqref{e640}, giving an involution of $\PSLR$. We have $S=U^{2}\in \Gamma$, $M_{\tau,l}:=(U^{2}V_{\tau})^{l}$ satisfies $M_{\tau,l}(z)=\frac{1}{\tau^{2l}}z$, and $N_{\tau,l}:=V_{\tau}(U^{2}V_{\tau})^{l}$ satisfies $N_{\tau,l}(z)= \tau^{2l+2}Sz$. Consider the ray $\{iy: \ \  y\geq 1\}$. We decompose it into subintervals $[1,\tau]\cup[\tau,\tau^{2}]\cup[\tau^{2},\tau^{3}]\cup \ldots$\ . Then if $y\in[\tau^{k},\tau^{k+1}]$, we have
\begin{itemize}
\item[(i)] for even $k$, $M_{\tau,\frac{k}{2}}(iy)\in \mathfrak{a} $, and
\item[]
\item[(ii)]  for odd $k$, $N_{\tau,\frac{k-1}{2}}(iy) \in \mathfrak{a}$,
\end{itemize}
with a similar calculation holding for $y<1$. Since $V_{r}$, $M_{r,l}$ and $N_{r,l}$ are in $\Gamma$ for any integer $l$, applying this argument with $\tau = r$ shows that the closed geodesic $\Re(z)=0$ is mapped piecewise onto the segment $\mathfrak{a}$.

We now consider the two geodesics that make up the segments $\mathfrak{b}$ and $\mathfrak{c}$ of $\mathrm{ Fix}_{\Gamma}(\sigma,\mathfrak{F})$:
\begin{itemize}
\item[(1)] For the arc $\mathfrak{c}$, the geodesic $l_{\mathfrak{c}}: x^{2}+y^{2}= r^{2}$ is mapped to the imaginary axis by $\eta_{\mathfrak{c}} = \left( \begin{smallmatrix} \frac{1}{r}&1\\ -\frac{1}{r}&1 \end{smallmatrix}\right )$, sending $-r \mapsto 0$, $ir \mapsto i$ and $r \mapsto i\infty$. The other endpoint of $\mathfrak{c}$, denoted by $w=re^{i\frac{\pi}{4}}$ is mapped to $is$. We then partition the imaginary axis as done above with $\tau$ replaced with $s$. Applying the maps $M_{s,l}$ and $N_{s,l}$ as above for the appropriate subintervals (with endpoints powers of $s$), and then applying the map $\eta_{\mathfrak{c}}^{-1}$, we see that subintervals of $l_{\mathfrak{c}}$ are mapped onto $\mathfrak{c}$ using the composite maps $\eta_{\mathfrak{c}}^{-1}M_{s,l}\eta_{\mathfrak{c}}$ and $\eta_{\mathfrak{c}}^{-1}N_{s,l}\eta_{\mathfrak{c}}$. To show these latter maps are in $\Gamma$, we write $\eta_{\mathfrak{c}}^{-1}M_{s,l}\eta_{\mathfrak{c}}= \left((\eta_{\mathfrak{c}}^{-1}U^{2}\eta_{\mathfrak{c}})(\eta_{\mathfrak{c}}^{-1}V_{s}\eta_{\mathfrak{c}})\right)^{l}$, and $\eta_{\mathfrak{c}}^{-1}N_{s,l}\eta_{\mathfrak{c}}=(\eta_{\mathfrak{c}}^{-1}V_{s}\eta_{\mathfrak{c}})\left((\eta_{\mathfrak{c}}^{-1}U^{2}\eta_{\mathfrak{c}})(\eta_{\mathfrak{c}}^{-1}V_{s}\eta_{\mathfrak{c}})\right)^{l}$. Since $\eta_{\mathfrak{c}}^{-1}U^{2}\eta_{\mathfrak{c}} = V_{r}\in \Gamma$ and $(\eta_{\mathfrak{c}}^{-1}V_{s}\eta_{\mathfrak{c}}) = (V_{r}U)^{-3}\in \Gamma$, we see that $l_{\mathfrak{c}}$ is a closed geodesic, being  mapped piecewise onto the arc $\mathfrak{c}$.
\end{itemize}

\begin{figure}[!ht]
  \begin{center}
\includegraphics[width=0.7\linewidth]{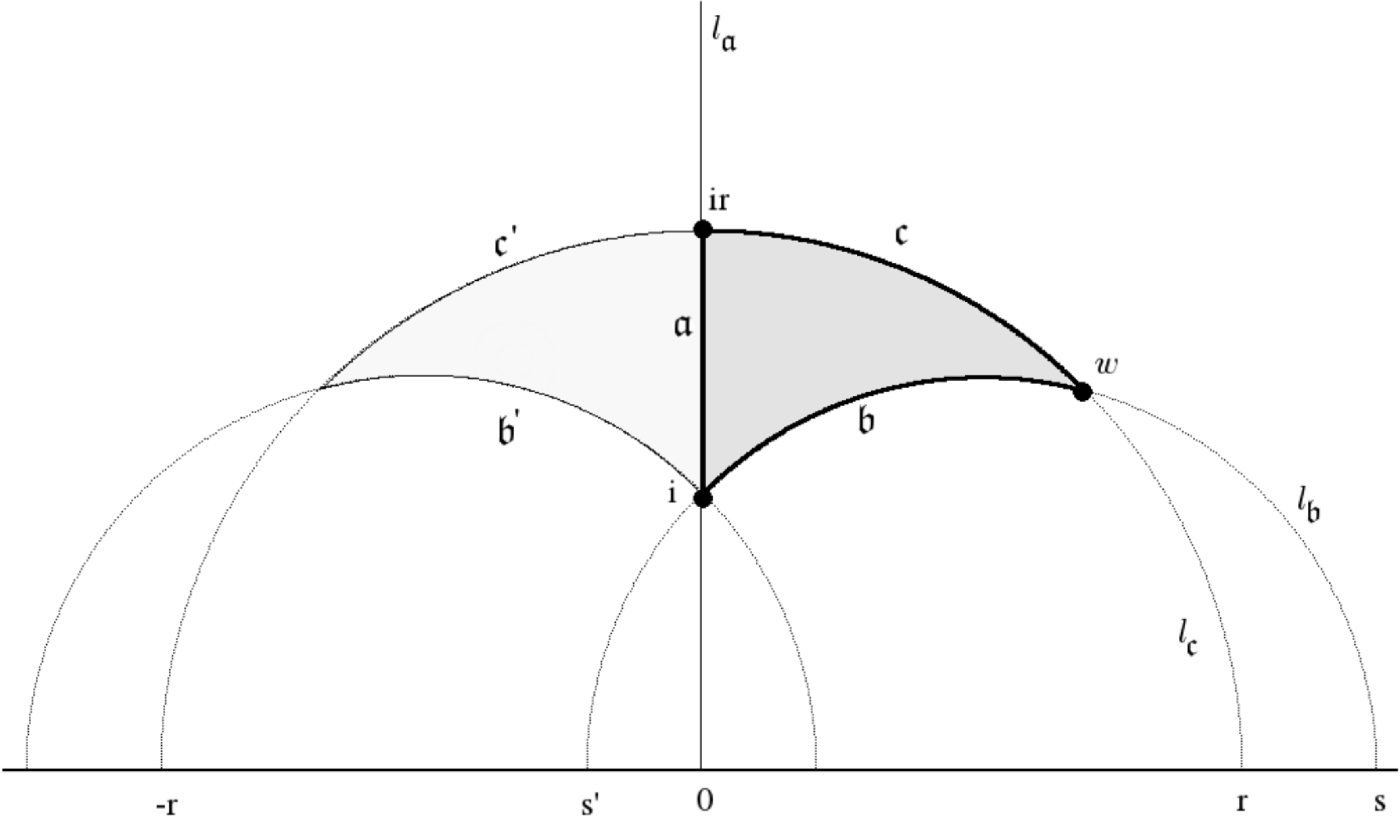}
  \end{center}
  \caption{}
\end{figure}

\begin{itemize}
\item[(2)] For the arc $\mathfrak{b}$, the argument is similar to that of $\mathfrak{c}$. The geodesic $l_{\mathfrak{b}}: (x-1)^{2}+y^{2}=2$ is mapped to the imaginary axis by $\eta_{\mathfrak{b}} = \left( \begin{smallmatrix} s&1\\ -1&s \end{smallmatrix}\right )$, sending $s' \mapsto 0$, $i \mapsto i$ and $s \mapsto i\infty$. The point $w$ is sent to a point $iq$ with $q>1$. Then as above, we need to evaluate $\eta_{\mathfrak{b}}^{-1}M_{q,l}\eta_{\mathfrak{b}}$ and $\eta_{\mathfrak{b}}^{-1}N_{q,l}\eta_{\mathfrak{b}}$, for which it suffices to consider $\eta_{\mathfrak{b}}^{-1}U^{2}\eta_{\mathfrak{b}}$ and $\eta_{\mathfrak{b}}^{-1}V_{q}\eta_{\mathfrak{b}}$. One has $\eta_{\mathfrak{b}}^{-1}U^{2}\eta_{\mathfrak{b}}= S=U^{2}\in \Gamma$. Next, $\eta_{\mathfrak{b}}^{-1}V_{q}\eta_{\mathfrak{b}} = \left( \begin{smallmatrix} 1&-q_{1}\\ q_{2}&-1 \end{smallmatrix}\right )$, where $q_{1}= \frac{s^{2}q+q^{-1}}{s(q-q^{-1})}$ and $q_{2}= \frac{s^{2}q^{-1}+q}{s(q-q^{-1})}$. Writing $q= -i\eta_{\mathfrak{b}}w$, we get $q_{1}= 1+\sqrt{3}=\sqrt{2}r$ and $q_{2}= \sqrt{3}-1= \sqrt{2}r^{-1}$, so that $\eta_{\mathfrak{b}}^{-1}V_{q}\eta_{\mathfrak{b}}= (V_{r}U)^{-3}\in \Gamma$ as in part $(1)$ above. This shows that $l_{\mathfrak{b}}$ is a closed geodesic, being  mapped piecewise onto the arc $\mathfrak{b}$. 
\end{itemize}


\subsubsection{The form ${\bf f(x_{1},x_{2},x_{3})= -7x_{1}^{2}+x_{2}^{2}+x_{3}^{2}\ }$}\

Vinberg's algorithm gives four root vectors $\vec{v}_{1}= <0,0,-1>$, $\vec{v}_{2}= <0,-1,1>$, $\vec{v}_{3}= <1,3,0>$ and $\vec{v}_{4}= <1,2,2>$ giving rise to the four hyperplanes
\begin{align*} 
\begin{split}
&H_{1}= \left\{\vec{x}: x_{3}=0\right\}, \qquad \qquad  H_{2}= \left\{\vec{x}: x_{2}-x_{3}=0\right\},\\ &H_{3}= \left\{\vec{x}: 7x_{1}-3x_{2}=0\right\},\quad H_{4}=\left\{\vec{x}: 7x_{1}-2x_{2}-2x_{3}=0\right\}.
\end{split}
\end{align*}
 Mapping to the Klein disc using $x=\frac{x_{2}}{\sqrt{7}x_{1}}$ and $y=\frac{x_{3}}{\sqrt{7}x_{1}}$ transforms the hyperplanes to the four lines
\[
(1)\ x=0\quad; \quad (2)\ x=y\quad; \quad(3)\ y=\frac{\sqrt{7}}{3}\quad \mathrm{ and}\quad (4)\ x+y=\frac{\sqrt{7}}{2}.
\]
Finally, mapping to $\mathbb{H}$ gives us the following four geodesics (Figure 9):
\begin{align}\label{n50}
\begin{split}
(\mathrm{ i})&\quad l_{\mathfrak{a}}:\ x=0\quad ;\quad\quad\quad\quad (\mathrm{ ii})\quad l_{\mathfrak{b}}:\ (x-1)^{2}+y^{2}=2\ ;\\
 (\mathrm{ iii})&\quad l_{\mathfrak{c}}:\ x^{2}+y^{2}=\vartheta ^{2}\quad ;\quad (\mathrm{ iv})\quad l_{\mathfrak{d}}:\ (x-\frac{2}{3}\eta)^{2}+y^{2}=\frac{1}{9}\eta^{2},
 \end{split}
\end{align}
where $\vartheta = \frac{3+\sqrt{7}}{\sqrt{2}}$ and  $\eta=(2+\sqrt{7})$. 

The reflection maps corresponding to the reflection on the four geodesics above are
\begin{align}
\begin{split}
(\mathrm{ i})\quad R_{\mathfrak{a}}(z)&= -\overline{z}\quad ;\quad\quad  \ (\mathrm{ ii})\quad R_{\mathfrak{b}}(z)= \frac{\overline{z}+1}{\overline{z}-1}\ ;\\
(\mathrm{iii})\quad R_{\mathfrak{c}}(z)&= \frac{\vartheta^{2}}{\overline{z}}\quad ;\quad\quad(\mathrm{iv})\quad R_{\mathfrak{d}}(z)= \frac{2\overline{z}-3\eta}{3\eta '\overline{z}-2}\ ,
\end{split}
\end{align}
with $\eta'= 2-\sqrt{7}$.

\begin{figure}[!ht]
  \begin{center}
\includegraphics[width=0.7\linewidth]{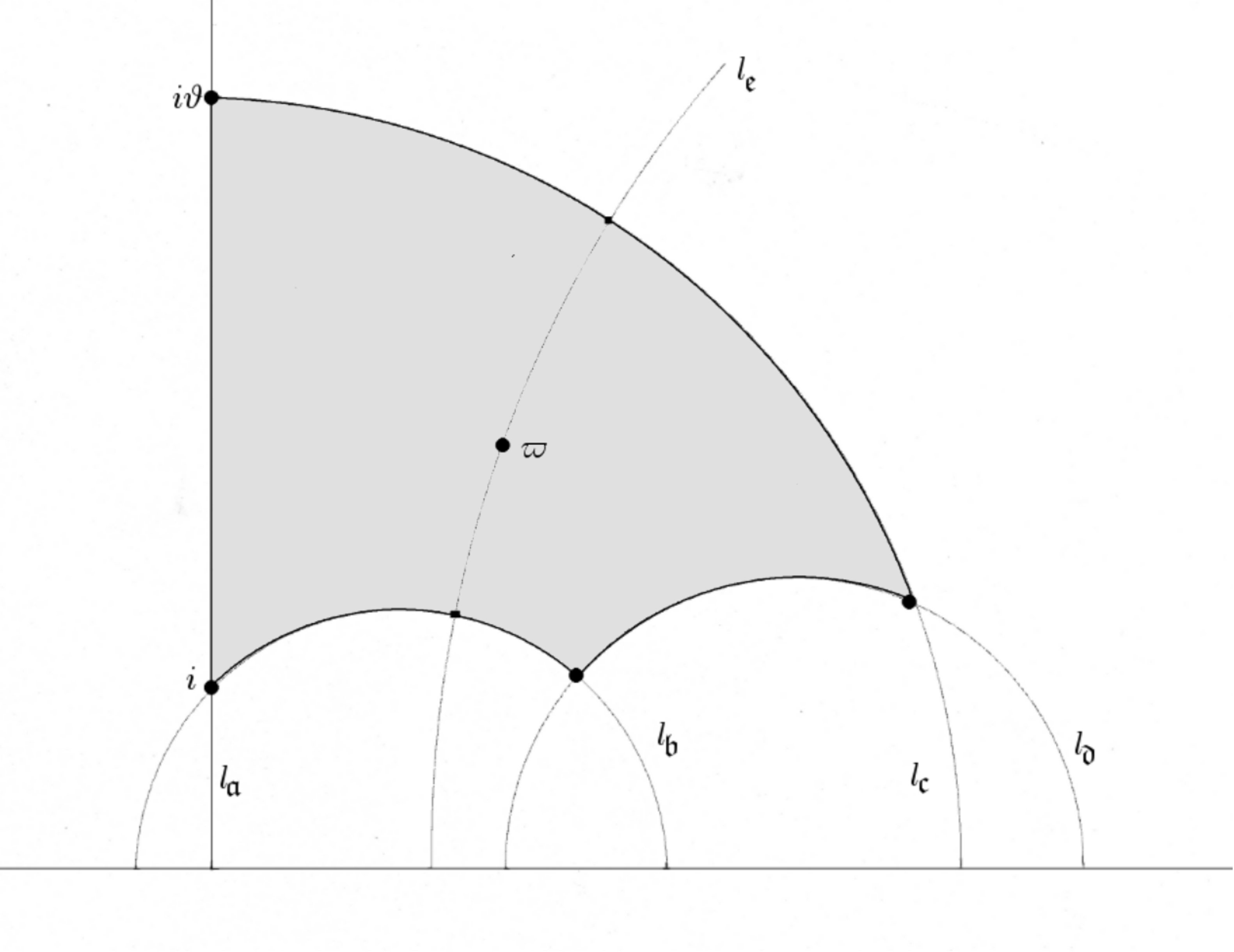}
  \end{center}
  \caption{}
\end{figure}

Now let $\vec{w}=<2,1,1>$ so that $f(\vec{w})=-2$, which implies that $\vec{w}$ lies on the chosen sheet of the hyperboloid (as opposed to the vectors $\vec{v}_{i}$ above). Reflection on $\vec{w}$ is reflection about a point (a $180^{\mathrm{ o}}$ rotation) and leads to a Cartan involution. If $R_{\vec{w}}$ denotes this reflection, then one has $R_{\vec{w}}(\vec{v}_{3})=-\vec{v}_{2}$ and $R_{\vec{w}}(\vec{v}_{4})=-\vec{v}_{1}$. In the upper-half plane model, $\vec{w}$ corresponds to the point $\varpi:=\frac{\eta}{3}(1+i\sqrt{2})$. The reflection about this point $\varpi$ we denote by $X$ and is given by the matrix in $\PSLR$
\[
X= \left( \begin{smallmatrix} \frac{1}{\sqrt{2}}&-\frac{\eta}{\sqrt{2}}\\ -\frac{\eta'}{\sqrt{2}}&-\frac{1}{\sqrt{2}} \end{smallmatrix}\right ).
\]

The geodesic 
\begin{equation}\label{e715}
l_{\mathfrak{e}}:\ \left(x- \sqrt{\frac{7}{2}}\vartheta \right)^{2}+y^{2}= \frac{5}{2}\vartheta ^{2}\ ,
\end{equation}
is orthogonal to both $l_{\mathfrak{b}}$ and $l_{\mathfrak{c}}$ and it contains the point $\varpi$.   It is mapped onto itself by $X$, sending $\varpi$ to itself and the intersections $l_{\mathfrak{b}}\cap l_{\mathfrak{e}} \mapsto l_{\mathfrak{c}}\cap l_{\mathfrak{e}}$ (we  will denote the intersection of two of the geodesics, say $l_{i}\cap l_{j}$ by $\tau_{ij}$) so that $X: \tau_{\mathfrak{be}} \mapsto \tau_{\mathfrak{ce}}$.

Let $\hat{\Gamma}$ be the group generated by the reflections $R_{\mathfrak{a}}, \ldots, R_{\mathfrak{d}}$. As in the previous example, we consider the subgroup of $\hat{\Gamma}$  (in $\PSLR$) of even length words, denoted by $\Gamma^{\text{*}}$. It is generated by $R_{\mathfrak{a}}R_{\mathfrak{b}}$, $R_{\mathfrak{a}}R_{\mathfrak{c}}$ and $R_{\mathfrak{a}}R_{\mathfrak{d}}$ and is a subgroup of $\hat{\Gamma}$  of index two, namely $\hat{\Gamma}=\Gamma^{\text{*}}\sqcup R_{\mathfrak{a}}\Gamma^{\text{*}}$. A fundamental domain $\mathfrak{F}^{\text{*}}$  of $\Gamma^{\text{*}}$ is the shaded region in Figure 10, together with its $\sigma$-reflection; it contains  $l_{\mathfrak{e}}\cap\mathfrak{F}^{\text{*}}$  (and so  $\varpi$) as  interior points and is mapped onto itself  by $X\notin \Gamma^{\text{*}}$. A similar statement holds for the (reflected) arc of $\sigma l_{\mathfrak{e}}$ with $\tilde{X}$ taking the place of $X$.

Now define $\Gamma = \Gamma^{\text{*}}\sqcup X\Gamma^{\text{*}} \neq \Gamma^{\text{*}}$. Using the relations $R_{\mathfrak{d}}=XR_{\mathfrak{a}}X$ and $R_{\mathfrak{c}}=XR_{\mathfrak{b}}X$, we conclude that $X\hat{\Gamma}X=\hat{\Gamma}$, so that $X\Gamma X=\Gamma$. These imply that $\Gamma$ is a subgroup  of $\PSLR$,  and contains $\Gamma^{\text{*}}$ as a subgroup of index two. A set of generators for $\Gamma$ is $X$,  $U$ and $V_{\vartheta}$. Here we have used the fact that $UV_{\vartheta}X\tilde{X}=I$, $R_{\mathfrak{a}}R_{\mathfrak{d}}= (R_{\mathfrak{a}}X)^{2} = \tilde{X}X$, $R_{\mathfrak{a}}R_{\mathfrak{b}} = U$ and $R_{\mathfrak{a}}R_{\mathfrak{c}}=V_{\vartheta}$, with $U$ and $V_{*}$ as given after \eqref{e640}. Finally, as in the previous example, $\Gamma_{f}$ in \eqref{e613} is generated by $X$,  $U$ and $V_{\vartheta}$ (see \cite{Magnus}, page 128), $\Gamma$ is cocompact and arithmetic.

The fundamental domain $\mathfrak{F}$ of $\Gamma$, obtained from that of $\Gamma^{\text{*}}$ is bounded by  the curves $l_{\mathfrak{b}}$, $l_{\mathfrak{c}}$, their reflections under $\sigma$, together with $l_{\mathfrak{e}}$ and its reflection under $\sigma$.  As in the previous subsection, we denote the arcs of the polygon by the subscripts of the corresponding geodesics (see Figure 11).  We have the equivalence of sides $\mathfrak{b}\sim \mathfrak{b}' = \sigma \mathfrak{b}$ using $U$, and $\mathfrak{c}\sim \mathfrak{c}' = \sigma \mathfrak{c}$ using $V_{\vartheta}$.  Since the arc $\mathfrak{e}$ (and $\mathfrak{e}'$) is in the interior of $\mathfrak{F}^{\text{*}}$, and since $X$  (and $\tilde{X}$)  is in the normaliser of $\Gamma$, preserving the arc, one sees that $\mathfrak{e}$ is not $\Gamma$-equivalent to $e'$. Thus, $\mathrm{ Fix}_{\Gamma}(\sigma;\mathfrak{F})= \mathfrak{a}\cup \mathfrak{b}\cup \mathfrak{c}$ is a closed curve with common endpoints  $\tau_{\mathfrak{be}}\sim \tau_{\mathfrak{ce}}$. The geodesics $l_{\mathfrak{a}},\ l_{\mathfrak{b}},\ l_{\mathfrak{c}}$ and $l_{\mathfrak{e}}$ are closed, with their corresponding endpoints mapped to one  another by $V_{\vartheta}$, $S=U^{2}$,  $V_{\vartheta}$ and $X$, respectively. 

As in the previous subsection, the geodesic $l_{\mathfrak{a}}$ is mapped onto $\mathfrak{a}$ using the  map $M_{\vartheta,\frac{k}{2}}$ and $N_{\vartheta,\frac{k-1}{2}}$. We now focus on $l_{\mathfrak{b}}$ and $l_{\mathfrak{c}}$, for which we have the interplay between three maps: $S: l_{\mathfrak{b}} \mapsto l_{\mathfrak{b}}\ $, $V_{\vartheta}: l_{\mathfrak{c}} \mapsto l_{\mathfrak{c}}\ $ and $X: l_{\mathfrak{b}} \mapsto l_{\mathfrak{c}}\ $. One sees that $l_{\mathfrak{c}}$ can be partitioned by starting with $\mathfrak{c}$ and going clockwise; one has four  consecutive arcs $\mathfrak{c}$, $X\mathfrak{b}$, $WX\mathfrak{b}$ and $W\mathfrak{c}$, which together (in order) we denote by $l^{\text{*}}_{\mathfrak{c}}$, where $W=XSX$. Then all of $l_{\mathfrak{c}}$ is obtained from $l^{\text{*}}_{\mathfrak{c}}$ as a sum of $(WV_{\vartheta})^{k}l^{\text{*}}_{\mathfrak{c}}$, with $k\in \mathbb{Z}$. Similarly, for $l_{\mathfrak{b}}$ we start with the arcs $\mathfrak{b}$, $X\mathfrak{c}$, $W'X\mathfrak{c}$ and $W'\mathfrak{b}$, denoted collectively by $l^{\text{*}}_{\mathfrak{b}}$, with $W'=XV_{\vartheta}X$. Then $l_{\mathfrak{b}}$ is obtained from $l^{\text{*}}_{\mathfrak{b}}$ as a sum of $(W'S)^{k}l^{\text{*}}_{\mathfrak{b}}$, with $k\in \mathbb{Z}$. The arc $l_{\mathfrak{c}}^{\text{*}}$ geometrically can  be viewed as traversing $\mathfrak{c}$ to $\tau_{\mathfrak{ce}}$, then to $\mathfrak{b}$ from $\tau_{\mathfrak{b}}$, then on $\mathfrak{b}$ backwards and finally $\mathfrak{c}$ backward (similarly for $l_{\mathfrak{b}}^{\text{*}}$). Thus, $l_{\mathfrak{b}}$ and also $l_{\mathfrak{c}}$ are piecewise equivalent to the pair of (joined) arcs $\mathfrak{b}$ and $\mathfrak{c}$.

\begin{figure}[!ht]
  \begin{center}
\includegraphics[width=0.7\linewidth]{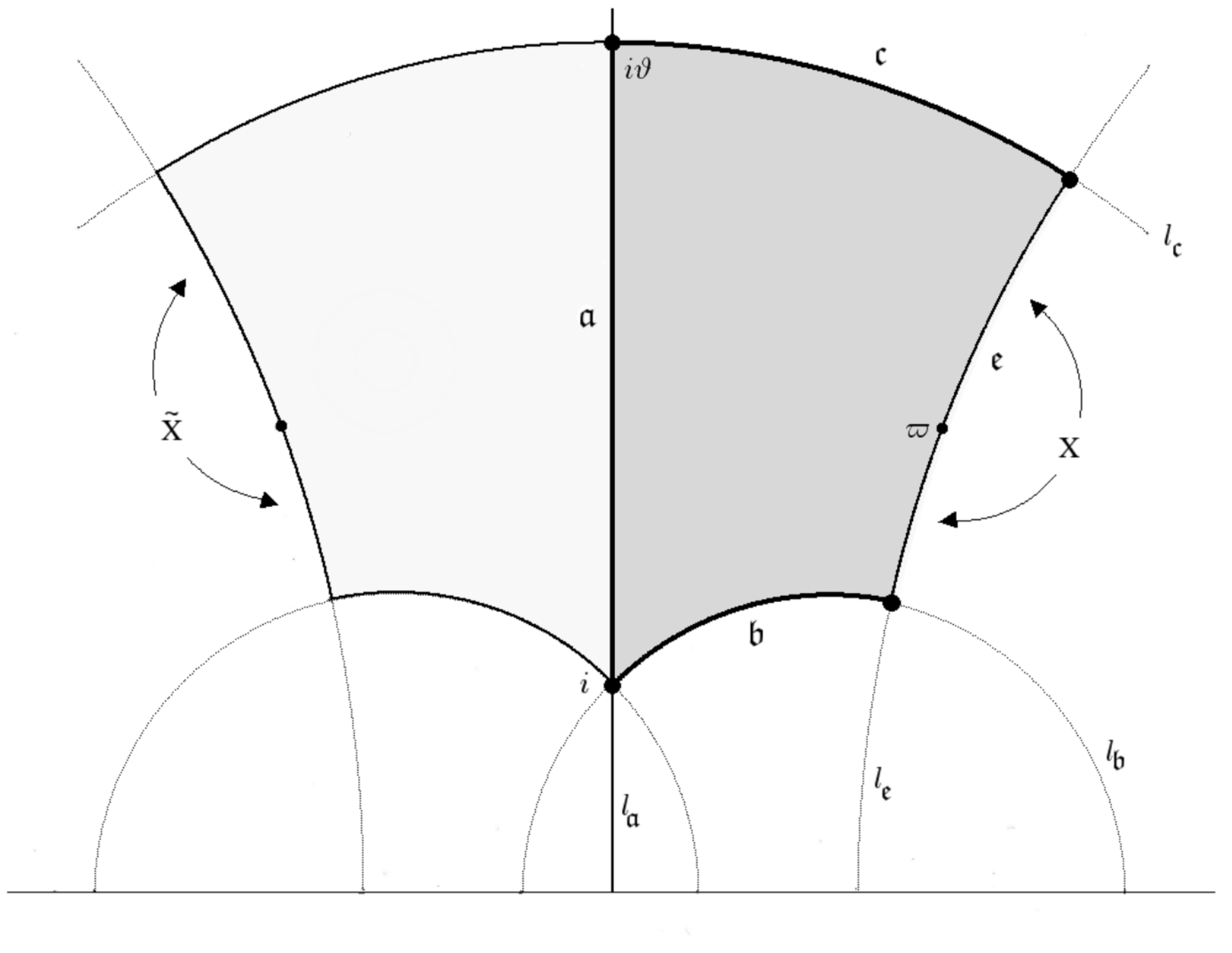}
  \end{center}
  \caption{}
\end{figure}

\newpage

{\small
\vspace{30pt}
\noindent AMIT GHOSH, Department of Mathematics, Oklahoma State University, Stillwater, OK 74078, USA \hfill {\itshape E-mail address}: ghosh@okstate.edu 
\vspace{12pt}

\noindent ANDRE REZNIKOV, Department of Mathematics, Bar-Ilan University, Ramat Gan, 52900, ISRAEL \hfill {\itshape E-mail address}: reznikov@math.biu.ac.il
\vspace{12pt}

\noindent PETER SARNAK, School of Mathematics, Institute for Advanced Study; Department of Mathematics, Princeton University, Princeton, NJ 08540, USA \\ {\itshape E-mail address}: sarnak@math.ias.edu}

\end{document}